\newtheorem{thm}{Theorem}[section]
\newtheorem{cor}[thm]{Corollary}
\newtheorem{remark}[thm]{Remark}
\newtheorem{lemma}[thm]{Lemma}
\newtheorem{prop}[thm]{Proposition}
\newtheorem{ex}[thm]{Example}
\newtheorem{defn}[thm]{Definition}
\newcommand{\bb}[1]{\mathbb{#1}}
\newcommand{\cl}[1]{\mathcal{#1}}
\newcommand{\ff}[1]{\mathfrak{#1}}
\begin{document}

\title[Algebras of Functions]{Operator Algebras of Functions}

\author[M.~Mittal]{Meghna Mittal}
\address{Department of Mathematics, University of Houston, Houston,
  Texas 77204-3476, U.S.A.}
\email{mittal@math.uh.edu}

\author[V.~I.~Paulsen]{Vern I.~Paulsen}
\address{Department of Mathematics, University of Houston,
Houston, Texas 77204-3476, U.S.A.}
\email{vern@math.uh.edu}

\thanks{}
\subjclass[2000]{Primary 46L15; Secondary 47L25}

\begin{abstract}  We present some general theorems
  about operator algebras that are algebras of functions on sets,
  including theories of local algebras, residually finite dimensional
  operator algebras and algebras that can be represented as
  the scalar multipliers of a vector-valued reproducing kernel Hilbert
  space.   We use these to further develop a quantized function theory
  for various domains that extends and unifies Agler's theory
  of commuting contractions and the Arveson-Drury-Popescu theory of commuting
  row contractions. We obtain analogous factorization theorems, prove
  that the algebras that we obtain are dual operator algebras and show
  that for many domains, supremums over all commuting tuples of operators
  satisfying certain inequalities are obtained over all commuting
  tuples of matrices. 

\end{abstract}

\maketitle


  \section{Introduction}

  A concrete {\em operator algebra} $\cl A$ is just a subalgebra of $
  B(\cl H),$ the bounded operators on a Hilbert space $\cl H.$ The
  operator norm on $ B(\cl H)$ gives rise to a norm on $\cl A.$
  Moreover, the identification $M_n(\cl A) \cong \cl A \otimes M_n
  \subseteq  B(\cl H \otimes \bb C^n) \cong B(\cl H^n)$ endows
  the matrices over $\cl A$ with a family of norms in a natural way,
  where $M_n$ denotes the algebra of $n \times n$ matrices. It is common
  practice to identify two operator algebras $\cl A$ and $\cl B$ as being the ``same'' if
  and only if there exists an algebra isomorphism $\pi: \cl A \to \cl B$
  that is not only an isometry, but which also preserves all the matrix
  norms, that is such that $\|(\pi(a_{i,j}))\|_{M_n(\cl B)} =
  \|(a_{i,j})\|_{M_n(\cl A)},$ for every $n$ and every element
  $(a_{i,j}) \in M_n(\cl A).$  Such a map $\pi$ is called a {\em
  completely isometric isomorphism.} In \cite{BRS} an abstract
  characterization of operator algebras, in the above sense, was given
  and since that time a theory of such algebras has evolved. For more
  details on the abstract theory of operator algebras, see \cite{BLM}, \cite{Pa} or \cite{Pi}.

  In this note we present a theory for a special class of abstract
  abelian operator
  algebras which contains many important examples arising in function
  theoretic operator theory, including the Schur-Agler and the
  Arveson-Drury-Popescu algebras. This theory will allow us to answer certain
  types of questions about such algebras in a unified manner. We will
  prove that our hypotheses give an abstract characterization of
  operator algebras that are completely isometrically isomorphic to
  multiplier algebras of vector-valued reproducing kernel Hilbert spaces.

  Our results will show that under certain mild
  hypotheses, operator algebra norms which are defined by taking the
  supremum of certain families of operators on Hilbert spaces of
  arbitrary dimensions can be obtained by restricting the family of
  operators to finite dimensional Hilbert spaces.
  Thus, in a certain sense, which will be explained later, our results
  give conditions that guarantee that an algebra is {\em residually
  finite dimensional.}

Finally, we apply our results to study ``quantized function theories''
on  various domains.  Our work in this direction should be compared to
that of Ambrozie-Timotin\cite{AT}, Ball-Bolotnikov\cite{BB} and
Kalyuzhnyi-Verbovetzkii\cite{K-V}. These authors obtain the same type
of factorization theorem via unitary colligation methods as we obtain
via operator algebra methods, but they assume somewhat different(and
in many cases stronger) hypotheses. We, also, obtain a bit more information 
about the algebras themselves, including the fact that in many cases their 
norms can be obtained as supremums over matrices instead of operators. 

  We now give the relevant definitions. Recall that given any set $X$
  the set of all complex-valued functions on $X$ is an algebra over the
  field of complex numbers.

  \begin{defn} We call 
  $\cl A$ an {\bf operator algebra of functions} on a set $X$ provided:
  \begin{enumerate}
  \item $\cl A$ is a subalgebra of the algebra of functions on $X,$
  \item $\cl A $ separates the points of X and contains the constant functions,
 
  \item for each $n$ $M_n(\cl A)$ is equipped with a norm $\|.\|_{M_n(\cl
   A)},$ such that the set of norms satisfy the BRS axioms\cite{BRS} to be 
   an abstract operator algebra,
  \item for each $x \in X,$ the evaluation functional, $\pi_x: \cl A \to
  \bb C,$ given by $\pi_x(f) = f(x)$ is bounded.
  \end{enumerate}
  \end{defn}

  A few remarks and observations are in order.
  First note that if $\cl A$ is an operator algebra of functions on $X$
  and $\cl B \subseteq \cl A$ is any subalgebra, which contains the
  constant functions and still separates points, then $\cl B,$ equipped
  with the norms that $M_n(\cl B)$ inherits as a subspace of $M_n(\cl
  A)$ is still an operator algebra of functions.
 
  The basic example of an operator algebra of functions is
  $\ell^{\infty}(X),$ the algebra of all bounded functions on $X.$ If for
  $(f_{i,j}) \in M_n(\ell^{\infty}(X))$ we set
  \[ \|(f_{i,j})\|_{M_n(\ell^{\infty}(X))} = \|(f_{i,j})\|_{\infty} \equiv
  \sup \{ \|(f_{i,j}(x))\|_{M_n}: x \in X \}, \] where $\| \cdot\|_{M_n}$
  is the norm on $M_n$ obtained via the identification $M_n= B(\bb
  C^n),$ then it readily follows
  that properties (1)--(4) of the above definition are satisfied.
  Thus, $\ell^{\infty}(X)$ is an operator algebra of functions on $X$ in our
  sense and any subalgebra of $\ell^{\infty}(X)$ that contains the
  constants and separates points will be an operator algebra of
  functions on $X$ when equipped with the subspace norms.

  \begin{prop} Let $\cl A$ be an operator algebra of functions on $X,$
  then $\cl A \subseteq \ell^{\infty}(X),$ and for every $n$ and every
  $(f_{i,j}) \in M_n(\cl A),$ we have
  $\|(f_{ij})\|_{\infty} \leq \|(f_{ij})\|_{M_n(\cl A)}.$
  \end{prop}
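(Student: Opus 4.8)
The plan is to exploit the boundedness of the evaluation functionals together with the BRS axioms, which guarantee that matrix multiplication and the matrix-norm structure behave submultiplicatively in the expected way. The key observation is that for each $x \in X$, the evaluation $\pi_x : \cl A \to \bb C$ is a bounded algebra homomorphism, and a bounded homomorphism into $\bb C$ is automatically contractive because its iterates cannot grow in norm.

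First I would show that each $\pi_x$ is contractive. Since $\pi_x$ is an algebra homomorphism and $\pi_x(1) = 1$, if some $f$ satisfied $|\pi_x(f)| = |f(x)| > \|f\|_{\cl A}$, then applying $\pi_x$ to the powers $f^k$ and using submultiplicativity $\|f^k\|_{\cl A} \le \|f\|_{\cl A}^k$ would give $|f(x)|^k = |\pi_x(f^k)| \le \|\pi_x\| \, \|f\|_{\cl A}^k$, forcing $(|f(x)|/\|f\|_{\cl A})^k \le \|\pi_x\|$ for all $k$. Since the left side tends to infinity, this is a contradiction, so $|f(x)| \le \|f\|_{\cl A}$ for every $f$ and every $x$; that is, $\|\pi_x\| \le 1$.

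Next I would promote this to the matrix levels. The standard fact from the BRS theory is that a unital homomorphism which is contractive into a commutative $C^*$-algebra (here $\bb C$) is automatically completely contractive; equivalently, each amplification $\pi_x \otimes \mathrm{id}_{M_n} : M_n(\cl A) \to M_n$ is contractive. Concretely, for $(f_{ij}) \in M_n(\cl A)$ I would consider the induced map on $M_n$ and use that evaluation at $x$ is a one-dimensional representation, so its matricial amplification sends $(f_{ij})$ to the scalar matrix $(f_{ij}(x)) \in M_n$ with $\|(f_{ij}(x))\|_{M_n} \le \|(f_{ij})\|_{M_n(\cl A)}$. Taking the supremum over $x \in X$ then yields
\[
\|(f_{ij})\|_\infty = \sup_{x \in X} \|(f_{ij}(x))\|_{M_n} \le \|(f_{ij})\|_{M_n(\cl A)}.
\]
The case $n=1$ simultaneously shows $\cl A \subseteq \ell^\infty(X)$, since each $f$ is bounded by $\|f\|_{\cl A}$ in sup norm, and separation of points plus the bound guarantees the inclusion is genuine.

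The main obstacle I anticipate is justifying the automatic complete contractivity of each evaluation, i.e.\ upgrading the scalar contractivity of $\pi_x$ to contractivity of every amplification $\pi_x \otimes \mathrm{id}_{M_n}$. This is where the BRS axioms do the real work: one needs that a unital contractive homomorphism from an abstract operator algebra into $\bb C$ is completely contractive, which follows because the target is one-dimensional (any contractive map into $\bb C$ is trivially completely contractive, as $M_n(\bb C)$ embeds isometrically and positive linear functionals on $\bb C$ are automatically completely positive). Once that technical point is in hand, the supremum argument is routine.
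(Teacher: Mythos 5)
Your proposal is correct and follows essentially the same route as the paper: first the power trick $|f(x)|^k = |\pi_x(f^k)| \le \|\pi_x\|\,\|f\|^k$ to get $\|\pi_x\| \le 1$ and hence $\cl A \subseteq \ell^\infty(X)$, then automatic complete contractivity of the scalar functional $\pi_x$ to pass to matrix levels. One small note: the clean justification for the second step is simply that any bounded linear functional on an operator space is completely bounded with $\|\pi_x\|_{cb} = \|\pi_x\|$ (which is what the paper invokes); the remarks about commutative $C^*$-algebra targets and complete positivity are unnecessary and slightly off the mark, since no positivity structure is in play here.
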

  \begin{proof} Since $\pi_x: \cl A \to \bb C$ is bounded and the norm
  is sub-multiplicative, we have that for any $f \in \cl A,$
  $|f(x)|^n = |\pi_x(f^n)| \le \|\pi_x\|\|f^n\| \le \|\pi_x\|\|f\|^n.$
  Taking the $n$-th root of each side of this inequality and letting $n
  \to +\infty,$ yields $|f(x)| \le \|f\|,$ and hence, $f \in
  \ell^{\infty}(X).$ Note also that $\|\pi_x\|=1.$

  Finally, since every bounded, linear functional on an operator space
  is completely bounded and the norm and the cb-norm are equal, we have
  that $\|\pi_x\|_{cb} = \|\pi_x\| =1.$  Thus, for $(f_{i,j}) \in
  M_n(\cl A),$ we have $\|(f_{i,j}(x))\|_{M_n} = \|(\pi_x(f_{i,j}))\|
  \le \|(f_{i,j})\|_{M_n(\cl A)}.$
  \end{proof}

  Given an operator algebra of functions on a set $X,$ $\cl A$ and $F=
  \{ x_1,...,x_k\}$ a set of distinct points in X, we define $I_F=\{f \in \cl A: f(z)=0 \ \forall \ z
  \in F\}$ and note that $ M_n(I_F) =\{f \in M_n(\cl A): f(z)=0 \ \forall \ z
  \in F\} \ \forall \ n.$ The quotient space $\cl A/I_F$ has a natural
  set of matrix norms given by defining 
  $ \|(f_{i,j} + I_F)\| = \inf \{ \|(f_{i,j} +g_{i,j})\|_{M_n(\cl A)}
  : g_{i,j} \in I_F \}. $
  Alternatively, this is the norm on $M_n(\cl A/I_F)$ that comes
  via the identification, $M_n(\cl A/I_F) = M_n(\cl A)/M_n(I_F),$
  where the latter space is given its quotient norm. It is
  easily checked that this family of matrix norms satisfies the BRS
  conditions and so gives $\cl A/ I_F$ the structure of an abstract
  operator algebra.

  We let $\pi_F(f) = f+ I_F$ denote the  quotient map  $\pi_F: \cl A \rightarrow \cl
  A/I_F$ so that for each $n, \pi_F^{(n)}: M_n(\cl A) \rightarrow
  M_n(\cl A/I_F) \cong M_n(\cl A)/M_n(I_F).$

  Since $\cl A$ is an algebra which separates points on $X$
  and contains constant functions, it follows that there exists
  functions $f_1,...,f_k \in \cl A,$ such that
  $f_i(x_j) = \delta_{i,j},$ where $\delta_{i,j}$ denotes the Dirac
  delta function. If we set $E_j = \pi_F(f_j),$ then it is easily seen
  that whenever $f \in \cl A$ and $f(x_i) = \lambda_i, i =1,...,k,$ then
  $\pi_F(f) = \lambda_1E_1 + \cdots + \lambda_k E_k.$ Moreover, $E_iE_j
  = \delta_{i,j} E_i,$ and $E_1 + \cdots + E_k = 1,$ where $1$ denotes
  the identity of the algebra $\cl A/I_F.$ Thus, $\cl A/I_F = span \{
  E_1,...,E_k \},$ is a unital algebra spanned by $k$ commuting
  idempotents.
  Such algebras were called {\em k-idempotent operator algebras} in
  \cite{Pa2} and we will use a number of results from that paper.

  \begin{defn}
  An operator algebra of functions $\cl A$ on a set X, is called a 
  {\bf local operator algebra of functions} if it satisfies
  \begin{center}
  $sup_F\|\pi^{(n)}_F((f_{ij}))\|=\|(f_{ij})\| \ \forall \ (f_{ij}) 
  \in M_n(\cl A) $ and for every n,
  \end{center}
  where the supremum is taken over all finite subsets of X.
  \end{defn}

  The following result shows that every operator algebra of functions
  can be re-normed so that it becomes local.

  \begin{prop} Let $\cl A$ be an operator algebra of functions on X, let
  $\cl A_L = \cl A$ and define a family of matrix norms on $\cl A_L,$
  by setting $\|(f_{i,j})\|_{M_n(\cl A_L)} = \sup_F
  \|(\pi_F(f_{i,j}))\|_{M_n(\cl A/I_F)}.$  Then $\cl A_L$ is a local
  operator algebra of functions on X and the identity map, $id: \cl A
  \to \cl A_L,$ is completely contractive.
   \end{prop}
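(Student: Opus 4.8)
The plan is to derive everything from the single observation that each quotient map $\pi_F\colon \cl A \to \cl A/I_F$ is completely contractive, which is immediate from the definition of the quotient norm: taking the competitor $g=0$ gives $\|\pi_F^{(n)}((f_{ij}))\|_{M_n(\cl A/I_F)} \le \|(f_{ij})\|_{M_n(\cl A)}$. First, this shows the proposed norms are finite, since $\|(f_{ij})\|_{M_n(\cl A_L)} = \sup_F \|\pi_F^{(n)}((f_{ij}))\|_{M_n(\cl A/I_F)} \le \|(f_{ij})\|_{M_n(\cl A)}$; and this same chain of inequalities is exactly the assertion that $id\colon \cl A \to \cl A_L$ is completely contractive, so that claim comes for free. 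That the $\|\cdot\|_{M_n(\cl A_L)}$ are genuine norms rather than seminorms follows from separation of points: if $\|(f_{ij})\|_{M_n(\cl A_L)} = 0$, then $\pi_{\{x\}}^{(n)}((f_{ij})) = 0$ for every singleton $F=\{x\}$, forcing $f_{ij}(x)=0$ for all $x$ and hence $(f_{ij})=0$.

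Next I would verify that $\cl A_L$ is an operator algebra of functions. Conditions (1) and (2) are inherited verbatim, since $\cl A_L = \cl A$ as an algebra of functions on $X$. For the BRS axioms (3) I would argue termwise: each $\cl A/I_F$ already satisfies the BRS axioms (as noted above its construction), and since $\pi_F$ is a linear homomorphism that commutes with the matricial operations, the Ruan estimates $\|\alpha X\beta\|\le\|\alpha\|\,\|X\|\,\|\beta\|$ and $\|X\oplus Y\|=\max(\|X\|,\|Y\|)$, together with submultiplicativity $\|XY\|\le\|X\|\,\|Y\|$, each survive passage to the supremum over $F$ (equivalently, $f\mapsto(\pi_F(f))_F$ embeds $\cl A_L$ as a subalgebra of the $\ell^\infty$-direct sum of the operator algebras $\cl A/I_F$). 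For condition (4), fix $x$ and take $F=\{x\}$; since $f-f(x)\cdot 1\in I_{\{x\}}$ we have $\pi_{\{x\}}(f)=f(x)\cdot 1$, and the quotient norm of the unit satisfies $\|1\|_{\cl A/I_{\{x\}}}=\inf_{h\in I_{\{x\}}}\|1+h\|_{\cl A}\ge 1$ by the first Proposition applied to $\cl A$ (as $(1+h)(x)=1$). Hence $|f(x)|\le\|\pi_{\{x\}}(f)\|_{\cl A/I_{\{x\}}}\le\|f\|_{\cl A_L}$, so $\pi_x$ is bounded.

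The substance of the proposition is locality, and this is the step I expect to require the most care. By definition $\|(f_{ij})\|_{M_n(\cl A_L)} = \sup_F \|\pi_F^{(n)}((f_{ij}))\|_{M_n(\cl A/I_F)}$, so it suffices to show that the quotient computed inside $\cl A_L$ agrees with the one computed inside $\cl A$, that is, $\cl A_L/I_F = \cl A/I_F$ completely isometrically for every $F$. The inequality $\|\cdot\|_{M_n(\cl A_L/I_F)}\le\|\cdot\|_{M_n(\cl A/I_F)}$ is automatic, since complete contractivity of $id\colon\cl A\to\cl A_L$ descends to the quotients. For the reverse inequality, fix $(g_{ij})\in M_n(\cl A)$ and any $(h_{ij})\in M_n(I_F)$; then
\[ \|(g_{ij})+(h_{ij})\|_{M_n(\cl A_L)} = \sup_{F'}\|\pi_{F'}^{(n)}((g_{ij})+(h_{ij}))\|_{M_n(\cl A/I_{F'})} \ge \|\pi_F^{(n)}((g_{ij})+(h_{ij}))\|_{M_n(\cl A/I_F)} = \|\pi_F^{(n)}((g_{ij}))\|_{M_n(\cl A/I_F)}, \]
where the middle step simply retains the term $F'=F$ and the last uses $(h_{ij})\in M_n(I_F)=\ker\pi_F^{(n)}$. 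Taking the infimum over $(h_{ij})$ yields $\|\pi_F^{(n)}((g_{ij}))\|_{M_n(\cl A_L/I_F)}\ge\|\pi_F^{(n)}((g_{ij}))\|_{M_n(\cl A/I_F)}$, the desired reverse inequality.

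With the identification $\cl A_L/I_F=\cl A/I_F$ in hand, locality is immediate: $\sup_F\|\pi_F^{(n)}((f_{ij}))\|_{M_n(\cl A_L/I_F)} = \sup_F\|\pi_F^{(n)}((f_{ij}))\|_{M_n(\cl A/I_F)} = \|(f_{ij})\|_{M_n(\cl A_L)}$, which is precisely the defining property of a local operator algebra of functions. The only genuine obstacle is the reverse quotient inequality above; the conceptual point is that re-norming $\cl A$ by the supremum over its finite quotients cannot shrink any individual finite-dimensional quotient norm, because that very quotient already appears as one of the terms over which the supremum is taken.
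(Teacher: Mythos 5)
The proposal is correct and takes essentially the same route as the paper: the heart of both arguments is the identity $\|\pi_F^{(n)}((f_{ij}))\|_{M_n(\cl A/I_F)}=\|\tilde\pi_F^{(n)}((f_{ij}))\|_{M_n(\cl A_L/I_F)}$, with the nontrivial inequality obtained by retaining the single term $F'=F$ in the supremum defining $\|\cdot\|_{M_n(\cl A_L)}$ and using that the perturbation lies in $M_n(I_F)=\ker\pi_F^{(n)}$. Your write-up additionally spells out the ``readily checked'' verification that $\cl A_L$ is an operator algebra of functions (via the $\ell^\infty$-direct-sum embedding and the estimate $\|1\|_{\cl A/I_{\{x\}}}\ge 1$), which the paper leaves to the reader.
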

  \begin{proof} It is clear from the definition of the norms on $\cl
  A_L$ that the identity map is completely contractive and it is readily
  checked that $\cl A_L$ is an operator algebra of functions on X.

  Let $\tilde{\pi}_F: \cl A_L \to \cl A_L/I_F,$ denote the
  quotient map, so that $\|\tilde{\pi}_F(f)\| = \inf \{ \|f+g\|_L: g
  \in I_F \} \le \inf \{ \|f+g\| : g \in I_F \} = \|\pi_F(f)\|,$ since
  $\|f+g\|_L \le \|f+g\|.$ We claim that for any $f \in \cl A,$ and any
  finite subset $F \subseteq X,$ we have that $\|\pi_F(f)\| =
  \|\tilde{\pi}_F(f)\|.$  To see the other inequality note that for $g \in I_F,$ and $G
  \subseteq X$ a finite set, we
  have $\|f+g\|_L = \sup_G \|\pi_G(f+g)\| \ge \|\pi_F(f+g)\| =
  \|\pi_F(f)\|.$  Hence, $\|\tilde{\pi}_F(f)\| \ge \|\pi_F(f)\|,$  
  and equality follows.

  A similar calculation shows that $\|(\tilde{\pi_F}(f_{i,j}))\| =
  \|(\pi_F(f_{i,j}))\|,$ for any matrix of functions.

  Now it easily follows that $\cl A_L$ is local, since 
  \[ \sup_F
  \|(\tilde{\pi}_F(f_{i,j}))\| = \sup_F \|(\pi_F(f_{i,j}))\| =
  \|(f_{i,j})\|_{M_n(\cl A_L)}. \]
  \end{proof}
  We let {\bf $\tilde{\cl A}$} denote the set of
  functions that are BPW limits of bounded nets of functions in $\cl A.$

  \begin{defn}\label{bpwdefn} 
  Given an operator algebra of functions $\cl A$ on $X$ we say that $f:X
  \to \bb C$ is a {\bf BPW limit} of $\cl A$ if there exists a uniformly
  bounded net $(f_{\lambda})_{\lambda} \in \cl A$ that converges
  pointwise on X to $f.$ We say that $\cl A$ is {\bf BPW complete} if
  it contains the set of functions that are BPW limits of bounded nets
  of functions in $\cl A.$ Let {\bf $\tilde{\cl A}$} denote the set of such functions, that is,
  $\tilde{\cl A}=\{f : \exists \text{ a bounded net } \left<f_{\lambda}\right> \subseteq \cl A \text{ such that } f(z)=lim_{\lambda}f_{\lambda}(z) \ \forall \ z \in X \}$

  Given  $(f_{i,j}) \in M_n(\tilde{\cl A}),$ we set
  \[ \|(f_{i,j})\|_{M_n(\tilde{\cl A})} = inf \{ C: (f_{i,j}(x)) = \lim_{\lambda} (f_{i,j}^{\lambda}(x))
  \text{ and } \|(f_{i,j}^{\lambda})\| \le C \} \]
  \end{defn}

  It is easily checked that for each $n,$ the above formula defines a
  norm on $M_n(\tilde{\cl A}).$ 
  It is also easily checked that a matrix-valued function, $(f_{i,j}): X \to
  M_n$ is the pointwise limit of a uniformly bounded net
  $(f_{i,j}^{\lambda}) \in M_n(\cl A)$ if and only if $f_{i,j} \in
  \tilde{\cl A}$ for every $i,j.$

  \begin{lemma}\label{normform} Let $\cl A$ be an operator algebra of functions on $X$
  and let $(f_{i,j}) \in M_n(\tilde{\cl A}).$  Then
  \begin{multline*} \|(f_{i,j})\|_{M_n(\tilde{\cl A})} = 
  \\ inf\{C: \forall F \subseteq X \text{ finite } \exists \ g_{i,j}^F
  \in \cl A \text{ with } (f_{i,j}\mid_F)=(g_{i,j}^F\mid_F),
  \|(g_{i,j}^F)\| \le C \}.
  \end{multline*}
  \end{lemma}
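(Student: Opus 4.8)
The plan is to prove the two inequalities separately, writing $N$ for the quantity $\|(f_{i,j})\|_{M_n(\tilde{\cl A})}$ defined in Definition \ref{bpwdefn} and $M$ for the infimum on the right-hand side of the asserted identity.

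The inequality $N \leq M$ is the routine direction. Given any $C$ admissible for $M$, for each finite $F \subseteq X$ I would choose $g^F = (g_{i,j}^F) \in M_n(\cl A)$ agreeing with $(f_{i,j})$ on $F$ and satisfying $\|(g_{i,j}^F)\| \leq C$, and then index these matrices by the finite subsets of $X$, directed by inclusion, to obtain a net $(g^F)_F$ in $M_n(\cl A)$. This net is uniformly bounded by $C$, and for each fixed $x \in X$ every $F \supseteq \{x\}$ gives $g^F(x) = f(x)$ exactly, so the net converges pointwise to $(f_{i,j})$. Hence $C$ is admissible in the definition of $N$, and taking the infimum over such $C$ yields $N \leq M$.

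The reverse inequality $M \leq N$ is the substantive part, and the main obstacle is that a uniformly bounded net merely converging pointwise need not agree \emph{exactly} with $(f_{i,j})$ on a finite set; the correction is supplied by the finite-dimensional quotients $\cl A/I_F$. Fix $C_0 > N$ and a uniformly bounded net $(f_{i,j}^{\lambda}) \in M_n(\cl A)$ with $\|(f_{i,j}^{\lambda})\| \leq C_0$ converging pointwise to $(f_{i,j})$. For a fixed finite $F = \{x_1,\dots,x_k\}$ I would push the net through the complete contraction $\pi_F^{(n)}$, using the idempotent description $\cl A/I_F = \mathrm{span}\{E_1,\dots,E_k\}$ to write $\pi_F^{(n)}(f^{\lambda}) = \sum_{m=1}^{k} (f_{i,j}^{\lambda}(x_m))\, E_m$. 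Since $M_n(\cl A/I_F)$ is finite dimensional, pointwise convergence of the finitely many scalar coefficients $f_{i,j}^{\lambda}(x_m) \to f_{i,j}(x_m)$ forces norm convergence of $\pi_F^{(n)}(f^{\lambda})$ to $\sum_m (f_{i,j}(x_m))\, E_m$. As $\pi_F$ is completely contractive we have $\|\pi_F^{(n)}(f^{\lambda})\| \leq C_0$ for all $\lambda$, so by continuity of the norm $\|\sum_m (f_{i,j}(x_m))\, E_m\|_{M_n(\cl A/I_F)} \leq C_0$.

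Finally I would unwind the quotient norm. Because agreement of $g^F \in M_n(\cl A)$ with $(f_{i,j})$ on $F$ is exactly the condition $\pi_F^{(n)}(g^F) = \sum_m (f_{i,j}(x_m))\, E_m$, the bound just obtained says that the quotient norm of this element is at most $C_0$; hence for every $\epsilon > 0$ there is $g^F \in M_n(\cl A)$ agreeing with $(f_{i,j})$ on $F$ and satisfying $\|(g_{i,j}^F)\| \leq C_0 + \epsilon$. Since this $\epsilon$ is independent of $F$, the number $C_0 + \epsilon$ is admissible for $M$, whence $M \leq C_0 + \epsilon$. Letting $\epsilon \to 0$ and then $C_0 \to N$ gives $M \leq N$, completing the proof.
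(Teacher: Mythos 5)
Your proof is correct. The easy direction ($N \le M$) is exactly the paper's argument: index the interpolants $g^F$ by the finite subsets of $X$ ordered by inclusion and observe that the resulting net is eventually constant at each point, hence converges BPW to $(f_{i,j})$ with bound $C$. For the substantive direction the two arguments rest on the same structural fact but package it differently. The paper works entirely inside $M_n(\cl A)$: it picks interpolating functions $f_1,\dots,f_k$ with $f_i(x_j)=\delta_{i,j}$, forms the explicit correction $(g_{i,j}^{\lambda}) = (f_{i,j}^{\lambda}) + A_1^{\lambda}f_1 + \cdots + A_k^{\lambda}f_k$ with $A_l^{\lambda} = (f_{i,j}(x_l)) - (f_{i,j}^{\lambda}(x_l))$, and estimates $\|(g_{i,j}^{\lambda})\| \le \|(f_{i,j}^{\lambda})\| + \sum_l \|A_l^{\lambda}\|\,\|f_l\|_{\cl A}$, which is $< C+\epsilon$ for $\lambda$ large since the $A_l^{\lambda}$ tend to $0$ and the $f_l$ depend only on $F$. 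You instead push the net through the complete contraction $\pi_F^{(n)}$ into the finite-dimensional algebra $\cl A/I_F = \mathrm{span}\{E_1,\dots,E_k\}$, where pointwise convergence of coefficients gives norm convergence, conclude that the coset determined by $(f_{i,j})|_F$ has quotient norm at most $C_0$, and then lift back using the definition of the quotient norm. The two routes are equivalent in content — your appeal to $\cl A/I_F = \mathrm{span}\{E_1,\dots,E_k\}$ uses the same interpolating functions, just hidden inside the quotient description — but yours is arguably cleaner in that it replaces the explicit norm estimate by the abstract facts that finite-dimensional spaces have continuous coordinates and that quotient norms are infima over representatives; the paper's version has the minor advantage of exhibiting the interpolant $g^F$ concretely as a small perturbation of a member of the original net.
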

  \begin{proof} The collection of finite subsets of $X$ determines a
  directed set, ordered by inclusion. If we choose for each finite set $F,$ functions
  $(g_{i,j}^F)$ satisfying the conditions of the right hand set, then
  these functions define a net that converges BPW to $(f_{i,j})$ and
  hence, the right hand side is larger than the left.  Conversely,
  given a net $(f_{i,j}^{\lambda})$ that converges pointwise to
  $(f_{i,j})$ and satisfies $\|(f_{i,j}^{\lambda})\| \le C$ and 
  any finite set $F= \{ x_1,...,x_k \},$ choose
  functions in $\cl A$ such that $f_i(x_j) = \delta_{i,j}.$ If we let
  $A_l^{\lambda} = (f_{i,j}(x_l)) - (f_{i,j}^{\lambda}(x_l)),$ then
  $(g_{i,j}^{\lambda}) = (f_{i,j}^{\lambda}) + A_1^{\lambda} f_1 + \cdots A_k^{\lambda}
  f_k \in M_n(\cl A)$ and is equal to $(f_{i,j})$ on $F.$ Moreover,
  $\|(g_{i,j}^{\lambda})\| \le \|(f_{i,j}^{\lambda})\| +
  \|A^{\lambda}_1\|_{M_n} \|f_1\|_{\cl A} + \cdots +
  \|A_k^{\lambda}\|_{M_n} \|f_k\|_{\cl A}.$ Thus, given $\epsilon >0,$
  since the functions $f_1,...,f_k$ depend only on $F,$
  we may choose $\lambda$ so that $\|(g_{i,j}^{\lambda})\| < C
  +\epsilon.$
  This shows the other inequality.
  \end{proof}
  \begin{lemma}
  Let $\cl A$ be an operator algebra of functions on the set X, then
  $\tilde{\cl A}$ equipped with the collection of norms on
  $M_n(\tilde{\cl A})$ given in
  Definition~\ref{bpwdefn} is an operator algebra.
  \end{lemma}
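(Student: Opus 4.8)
The plan is to verify the BRS axioms \cite{BRS} for the family of matrix norms introduced in Definition~\ref{bpwdefn}: that is, to check that $\tilde{\cl A}$ is an algebra, that its matrix norms satisfy Ruan's axioms (so that $\tilde{\cl A}$ is an abstract operator space), and that multiplication is completely contractive, i.e. $\|(f_{i,j})(g_{k,l})\|_{M_n} \le \|(f_{i,j})\|_{M_n}\|(g_{k,l})\|_{M_n}$. The key tool throughout will be the finite-set description of the norm from Lemma~\ref{normform}: the point is that ``agreeing on a finite set $F$'' means nothing more than equality of values at the finitely many points of $F,$ and this relation is preserved by sums, scalar multiples, amplifications, compressions, and products. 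Consequently each BRS axiom for $\tilde{\cl A}$ can be reduced to the corresponding axiom for $\cl A,$ which already holds because $\cl A$ is an operator algebra.

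First I would check that $\tilde{\cl A}$ is an algebra. Closure under addition and scalar multiplication is immediate from the net definition. For products, given $f,g \in \tilde{\cl A}$ with uniformly bounded nets $(f_\lambda)_{\lambda \in \Lambda}$ and $(g_\mu)_{\mu \in M}$ in $\cl A$ converging pointwise, the net $(f_\lambda g_\mu)$ indexed by the product directed set $\Lambda \times M$ is uniformly bounded (the supremum norm being dominated by the $\cl A$-norm, as shown above) and converges pointwise to $fg,$ so $fg \in \tilde{\cl A}.$ This is the one place where a single approximating net must be manufactured, and it is handled by the product order. The matrix remark preceding Lemma~\ref{normform} then identifies $M_n(\tilde{\cl A})$ with the uniformly bounded BPW limits of nets in $M_n(\cl A),$ so matrix products of elements of $M_n(\tilde{\cl A})$ again lie in $M_n(\tilde{\cl A}).$

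Next I would verify the operator-space axioms and complete contractivity using Lemma~\ref{normform}. Fixing $\epsilon>0$ and a finite set $F \subseteq X,$ choose representatives in $\cl A$ that agree with the given functions on $F$ and whose norms are within $\epsilon$ of the $\tilde{\cl A}$-norms. For Ruan's condition $\|\alpha X \beta\| \le \|\alpha\|\|X\|\|\beta\|$ with scalar $\alpha,\beta,$ apply $\alpha(\cdot)\beta$ to the representative of $X$; for the direct-sum condition $\|X \oplus Y\| = \max\{\|X\|,\|Y\|\},$ the inequality $\le$ comes from the block-diagonal representative, while $\ge$ comes from compressing a representative of $X \oplus Y$ to its diagonal corners (a completely contractive operation in $\cl A$); and for submultiplicativity the matrix product of the representatives of $(f_{i,j})$ and $(g_{k,l})$ agrees with $(f_{i,j})(g_{k,l})$ on $F$ and has norm at most the product of the two representative norms. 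In each case the resulting element of $M(\cl A)$ agrees with the target on $F,$ so Lemma~\ref{normform} yields the desired estimate after letting $\epsilon \to 0.$

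The main obstacle is conceptual rather than computational: it is ensuring that estimates involving two distinct functions (the product, and the lower bound in the direct-sum axiom) do not require extracting a common refining net. Lemma~\ref{normform} removes exactly this difficulty, since the two representatives may be chosen independently over the same finite set $F$ and combined there. With all three families of axioms verified, and since $\tilde{\cl A}$ is unital (it contains the constant functions), the BRS theorem \cite{BRS} shows that $\tilde{\cl A}$ is an operator algebra, completing the proof.
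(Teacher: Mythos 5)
Your proof is correct, but it runs along a different track from the paper's. The paper verifies the BRS axioms by working directly with the net definition of the norm in Definition~\ref{bpwdefn}: for each axiom it picks approximating nets $G_{\lambda}, H_{\lambda}$ in $M_n(\cl A)$ with norms within $\epsilon$ of the target norms, forms $LG_{\lambda}M$, $G_{\lambda}H_{\lambda}$, or $G_{\lambda}\oplus H_{\lambda}$, and passes to the pointwise limit. You instead route every estimate through the finite-set reformulation of the norm in Lemma~\ref{normform}, choosing for each finite $F$ representatives in $M_n(\cl A)$ that agree with the given matrix functions on $F$ and combining them there. Both are legitimate since Lemma~\ref{normform} precedes this lemma. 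Your version has a genuine advantage: the paper's argument quietly indexes the two approximating nets for $G$ and $H$ by the same $\lambda$, which strictly speaking requires passing to the product directed set (exactly the device you make explicit when showing $fg\in\tilde{\cl A}$); the finite-set formulation dissolves this issue entirely, because the two representatives over a fixed $F$ are chosen independently and no common refinement is needed. The cost is that you must invoke the (completely contractive) compression to diagonal corners to get the lower bound in the $L^{\infty}$ axiom, a step the paper omits since only the inequality $\|G\oplus H\|\le\max\{\|G\|,\|H\|\}$ is required for BRS; including it is harmless. One small point worth making explicit in a final write-up: when you say the product of the two representatives ``agrees with $(f_{i,j})(g_{k,l})$ on $F$,'' this uses that evaluation at each point of $F$ is an algebra homomorphism, so agreement of factors at the points of $F$ propagates to the matrix product; this is true but deserves a sentence.
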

  \begin{proof} 
  It is clear from the definition of $\tilde{\cl A}$ that it is an algebra.
  Thus, it is enough to check that the axioms of BRS are satisfied by the 
  algebra $\tilde{\cl A}$ equipped with the matrix norms given in 
  the Definition~\ref{bpwdefn}.\\
  \indent If L and M are scalar matrices of appropriate sizes and
  $G\in M_n(\tilde{\cl A}),$ then for $\epsilon > 0$ there exists 
  $G_{\lambda} \in M_n(\cl A)$ such that $\lim_{\lambda}G_{\lambda}(x)=G(x) \ \forall \ x\in X$ and
  $\sup_{\lambda}\|G_{\lambda}\|_{M_n(\cl A)} \leq \|G\|_{M_n(\tilde{\cl A})}+\epsilon.$ 
  Since $\cl A$ is an operator space, $LG_{\lambda}M \in M_n(\cl A)$ 
  and $\|LG_{\lambda} M\|_{M_n(\cl A)}\leq \|L\|\|G_{\lambda}\|_{M_n(\cl A)}\|M\|.$
  Note that it follows that $\|LGM\|_{M_n(\tilde{\cl A})} \leq \|L\|\|G\|_{M_n(\tilde{\cl A})}\|M\|,$ since
  $LG_{\lambda}M \rightarrow LGM $ pointwise and $\sup_{\lambda}
  \|LG_{\lambda}M\|_{M_n(\cl A)} \leq \|L\|(\|G\|_{M_n(\tilde{\cl A})}+\epsilon)\|M\| \ 
  \forall \ \epsilon > 0.$\\
  
  \indent  If $ G, \ H \in  M_n(\tilde{\cl A}),$ then for every
  $\epsilon > 0$ there exists $G_{\lambda}, \ H_{\lambda} \in M_n(\cl A)
  \text{ such} \\ \text {that }  \lim_{\lambda} G_{\lambda}(x)=\lim_{\lambda} G(x) $
  and  $ \lim_{\lambda} H_{\lambda}(x)=H(x)\ \forall  x \in X.$
  Also, we have that $\sup_{\lambda}\|G_{\lambda}\|_{M_n(\cl A)} \leq \|G\|_{M_n(\tilde{\cl A})}+
  \epsilon$ and $\sup_{\lambda}\|H^{\lambda}\|_{M_n(\cl A)} \leq \|H\|_{M_n(\tilde{\cl A})}+ \epsilon.$ \\
  Let $L =GH$ and $L_{\lambda}=G_{\lambda}H_{\lambda}.$ Since ${\cl A}$ is matrix normed algebra, 
  $L_{\lambda} \in M_n(\cl A)$ and $\|L_{\lambda}\|_{M_n(\cl A)} \leq
  \|G_{\lambda}\|_{M_n(\cl A)}\|H_{\lambda}\|_{M_n(\cl A)} $ for every 
  $\lambda.$\\$\Rightarrow  \lim_{\lambda} L_{\lambda}(x)=L(x) \text{ and } 
  \sup_{\lambda}\|L_{\lambda}\|_{M_n(\cl A)} \leq
  \sup_{\lambda}\|G_{\lambda}\|_{M_n(\cl A)} \sup_{\lambda}\|H_{\lambda}\|_{M_n(\cl A)}.$\\ This yields
  $\|L\|_{M_n(\tilde{\cl A})} \leq \|G\|_{M_n(\tilde{\cl A})}\|H\|_{M_n(\tilde{\cl A})},$ 
  and so the multiplication is completely contractive.\\
  \indent Finally, to see that the $L^{\infty}$  conditions are met,
  let $G \in M_n(\tilde{\cl A})$ and $H \in M_m(\tilde{\cl A}).$ 
  Given $\epsilon >0, \  \exists \ G_{\lambda} \in M_n(\cl A)$ and 
  $H_{\lambda} \in M_m(\cl A)$ such that $\lim_{\lambda} G_{\lambda}(x)=G(x) \ , \ \lim_{\lambda}H^{\lambda}(x)=H(x)$ 
  and $ \sup_{\lambda}\|G_{\lambda}\|_{M_n(\cl A)}\leq \|G\|_{M_n(\tilde{\cl A})} +\epsilon, \ 
  \sup_{\lambda}\|H_{\lambda}\|_{M_n(\cl A)} \leq \|H\|_{M_n(\tilde{\cl A})}+\epsilon.$\\ 
  Note that 
  $ G_{\lambda} \oplus H_{\lambda}  \in M_{n+m}(\cl A) $ and
  $ \|G_{\lambda} \oplus H_{\lambda}\| = \max\{\|G_{\lambda}\|_{M_n(\cl A)}
  ,\|H_{\lambda}\|_{M_n(\cl A)} \}$ for every $\lambda$  which implies that 
  $G \oplus H  \in M_{n+m}(\tilde{\cl A}),$ and
  \begin{multline*}
  \|G \oplus H \|_{M_{n+m}(\tilde{\cl A})} \leq  \sup_{\lambda}\| G_{\lambda} \oplus H_{\lambda} \| = 
  \sup_{\lambda}[\max\{ \|G_{\lambda}\|_{M_n(\cl A)},\|H_{\lambda}\|_{M_m(\cl A)}\}] \\ \ \ \ \ \ \ \ =
  \max\{ \sup_{\lambda}\|G_{\lambda}\|_{M_n(\cl A)},\sup_{\lambda}\|H^{\lambda}\|_{M_m(\cl A)}\}
  \\ \leq \max\{ \|G\|_{M_n(\tilde{\cl A})}+\epsilon,\|H\|_{M_m(\tilde{\cl A})} +\epsilon\}\  \ \forall \ \epsilon > 0.
  \end{multline*}
  This shows that 
  $ \| G \oplus H\|_{M_{n+m}(\tilde{\cl A})} \leq \max\{ \|G\|_{M_n(\tilde{\cl A})},
  \|H\|_{M_m(\tilde{\cl A})}\},$
  and so the $L^{\infty}$ condition follows. This completes the proof of the result.
  \end{proof}

  \begin{lemma} If $\cl A$ is an operator algebra of functions on the
  set X, then $\tilde{\cl A} $ equipped with the norms of
  Definition~\ref{bpwdefn} is a local operator algebra of functions on
  X. Moreover, for $(f_{i,j}) \in M_n(\cl A),
  \|(f_{i,j})\|_{M_n(\tilde{\cl A})} = \|(f_{i,j})\|_{M_n(\cl A_L)}.$ 
  \end{lemma}

  \begin{proof}
  It is clear from the definition of the norms on $\tilde{\cl A}$ that the identity 
  map from $\cl A$ to $\tilde{\cl A}$ is completely contractive and thus $\cl A 
  \subseteq \tilde{\cl A}$ as sets. This indeed shows that $\tilde{\cl A}$ separates points
  of X and contains constant functions.

  Let $(f_{ij}) \in M_n(\tilde{\cl A})$ and $\epsilon > 0,$ then $ \exists$ a 
  net $(f_{ij}^{\lambda}) \in M_n(\cl A)$ such that $\lim_{\lambda}(f_{ij}^{\lambda}(x))
  =(f_{ij}(x)) \ \forall \ x\in X$ and $\sup_{\lambda}\|(f_{ij}^{\lambda})\|_{M_n(\cl A)} 
  \leq \|(f_{ij})\|_{M_n(\tilde{\cl A})} +\epsilon.$\\ Since $\cl A$ is an operator algebra
  of functions on the set X, we have that $\|(f_{ij}^{\lambda})\|_{\infty} \leq 
  \|(f_{ij}^{\lambda})\|_{M_n(\cl A)} \ \forall \  \lambda \Longrightarrow 
  \sup_{\lambda}\|(f_{ij}^{\lambda})\|_{\infty} \leq \|(f_{ij})\|_{M_n(\tilde{\cl A})}+\epsilon.$\\
  Fix $ z \in X$, then
  \begin{eqnarray*} 
  \|(f_{ij}(z))\|= \lim_{\lambda}\|(f_{ij}^{\lambda}(z))\|  & \leq &  \sup_{\lambda} 
  \|(f_{ij}^{\lambda})\|_{\infty} \\ &\leq & \|(f_{ij})\|_{M_n(\tilde{\cl A})}+\epsilon
  \ \  \ \ \ \  \forall \ \epsilon > 0 . 
  \end{eqnarray*}
  By letting $\epsilon \rightarrow 0$ and supping over z $\in X, $ we get 
  that $ \|(f_{ij})\|_{\infty} \leq \|(f_{ij})\|_{M_n(\tilde{\cl A})}.$
  Hence, $\tilde{\cl A}$ is an operator algebra of functions on the 
  set X.\\ 
  \indent Denote $\tilde{I}_F =\{f \in \tilde{\cl A}: f|_F \equiv 0\}$
  and let $(f_{ij}) \in M_n(\tilde{\cl A}).$ Then, clearly 
  $\sup_F\|(f_{ij}+\tilde{I}_F)\|_{M_n({\tilde{\cl A}}/{\tilde{I}_F})}
  = \|(f_{ij})\|_{M_n(\tilde{\cl A})}.$ To see the other inequality,
  assume that $\sup_F\|(f_{ij}+\tilde {I}_F)\| < 1,$ then $\forall$ finite
  $ F \subseteq X \ \exists \  (h_{ij}^F) \in M_n(\tilde{\cl A})$
  such that $(h_{ij}^F)|_{F} =(f_{ij}^F)|_F$ and $\sup_F\|h_{ij}^F\| \leq 1.$ 
  Fix a set $F\subseteq X$ and $(h_{ij}^F)\in M_n(\tilde{\cl A})$ then 
  $\forall$ finite $ F'\subseteq X \ \exists \  (k_{ij}^{F'}) \in M_n(\cl A)$
  such that $(k_{ij}^{F'})|_{F'} =(h_{ij}^F)|_{F'}$ and
  $\sup_{F'}\|k_{ij}^{F'}\| \leq 1.$ \\ In particular, let $F'=F$ then 
  $(k_{ij}^{F})|_{F} = (h_{ij}^F)|_{F}=(f_{ij})|_F$ and $\sup_F \|k_{ij}^F\| 
  \leq 1.\\ \ \Rightarrow \ \|(f_{ij})\|_{M_n(\tilde{\cl A})} \leq 1,$  and
  hence $\|(f_{ij})\|_{M_n(\tilde{\cl A})} \leq \sup_F \|(f_{ij}+\tilde{I}_F)\|
  _{M_n({\tilde{\cl A}}/{\tilde{I}_F})}. $\\
  \indent Finally, given that $(f_{ij}) \in M_n(\cl A),$ 
  $\|(f_{ij})\|_{M_n(\tilde{\cl A})}= 
  \sup_F\|(f_{ij}+\tilde{I}_F)\|_{M_n({\tilde{\cl A}}/{\tilde{I}_F})}.$
  Note that  $\forall \ F \subseteq X$ we have $\|(f_{ij}+\tilde{I}_F)\|_{M_n({\tilde{\cl A}}/{\tilde{I}_F})}
  \leq \|(f_{ij}+I_F)\|_{M_n({\cl A}/{I_F})},$ since $I_F \subseteq \tilde{I}_F.$ 
  We claim that for any $(f_{ij}) \in M_n(\cl A),$ and for any finite subset
  $ F\subseteq X,$ we have that $\|(f_{ij}+I_F)\|= \|(f_{ij}+\tilde{I}_F)\|.$
  To see the other inequality, let $(g_{ij}) \in M_n(\tilde{I}_F). $ Then  $\forall \ \epsilon > 0,$ 
  and $ G \subseteq X, \ \exists \  (h_{ij}^G) \in M_n(\cl A)$ such that
  $(h_{ij}^G)|_G= (f_{ij}+g_{ij})|_G$ and $\sup_G\|(h_{ij}^G)\| \leq \|(f_{ij}+g_{ij})\|+ \epsilon.$ 
  Hence, $\|(f_{ij}+I_F)\|= \|(h_{ij}^F+I_F)\| \leq \|(h_{ij}^F)\| \leq \|(f_{ij}+g_{ij})\| +\epsilon \ 
  \ \forall \ \epsilon > 0,$ and the equality follows. Now it is immediate to see that,
  \begin{eqnarray*}
  \|(f_{ij})\|_{M_n(\tilde{\cl A})}= \sup_F \|(f_{ij}+I_F)\|= \|(f_{ij})\|_{M_n(\cl A_L)}
  \end{eqnarray*}
  \end{proof}

   \begin{cor}
   If $\cl A$ is a BPW complete operator algebra then $\cl A_L= \tilde{\cl A}$ completely isometrically.
   \end{cor}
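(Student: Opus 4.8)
The plan is to combine the definition of BPW completeness with the norm identity established in the preceding lemma; essentially no new analytic work is required, so the task is mostly one of bookkeeping the set-theoretic identifications. The first step is purely set-theoretic: since every $f \in \cl A$ arises as the pointwise limit of the constant net $\langle f \rangle$, we always have the inclusion $\cl A \subseteq \tilde{\cl A}$ as sets of functions. BPW completeness is precisely the hypothesis supplying the reverse inclusion $\tilde{\cl A} \subseteq \cl A$, so that $\tilde{\cl A} = \cl A$ as sets. Since $\cl A_L = \cl A$ as a set by its very definition, all three of $\cl A$, $\cl A_L$, and $\tilde{\cl A}$ share the same underlying set, and hence so do the matrix spaces, i.e.\ $M_n(\cl A) = M_n(\cl A_L) = M_n(\tilde{\cl A})$ for every $n$. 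In particular the identity map $\cl A_L \to \tilde{\cl A}$ is a well-defined bijective algebra isomorphism.

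It then remains to verify that this identity map is completely isometric, and for this I would invoke the preceding lemma verbatim. That lemma asserts that for every $(f_{i,j}) \in M_n(\cl A)$ one has $\|(f_{i,j})\|_{M_n(\tilde{\cl A})} = \|(f_{i,j})\|_{M_n(\cl A_L)}$. Under BPW completeness, every element of $M_n(\tilde{\cl A})$ already lies in $M_n(\cl A)$, so this equality in fact holds for every member of the common matrix space, for each $n$. Thus the identity map preserves all of the matrix norms, which is exactly the assertion that it is completely isometric, completing the proof.

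The only point demanding genuine care is the set-theoretic identification at the outset: one must confirm that BPW completeness delivers the full equality $\tilde{\cl A} = \cl A$ and not merely a single inclusion. The reverse inclusion through constant nets makes this immediate, after which the corollary is a direct reading of the preceding lemma with no further estimates. I therefore expect the ``hard part'' to be only this verification that the underlying sets coincide; once that is in hand, the complete isometry is automatic.
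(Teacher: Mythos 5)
Your proposal is correct and follows essentially the same route as the paper: BPW completeness forces $\tilde{\cl A} = \cl A = \cl A_L$ as sets (the containment $\cl A \subseteq \tilde{\cl A}$ via constant nets being the trivial direction), after which the norm identity $\|(f_{i,j})\|_{M_n(\tilde{\cl A})} = \|(f_{i,j})\|_{M_n(\cl A_L)}$ from the immediately preceding lemma applies to every element of the common matrix space. The paper's own proof is the same two-line argument, citing the norm-formula lemma rather than the ``Moreover'' clause you invoke, but the content is identical.
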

  \begin{proof}  Since $\cl A$ is  BPW complete, $\cl A = \tilde{\cl A}$
  as sets. But by Lemma~\ref{normform}, the norm defined on $\cl A_L$
  agrees with the norm defined on $\tilde{\cl A}.$
  \end{proof}

  \begin{remark}
  In the view of above corollary, we denote the the norm on $\tilde{\cl A}$ by $\|.\|_L.$
  \end{remark}

  \begin{lemma}\label{local}
  If $\cl A$ is an operator algebra of functions on X, then $Ball(\cl A_L)$ is
  BPW dense in $Ball(\tilde{\cl A})$ and $\tilde{\cl A}$ is BPW complete, i.e.,
  $\tilde{\tilde{\cl A}}=\tilde{\cl A}.$
  \end{lemma}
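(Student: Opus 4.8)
The statement has two parts, and my plan is to attack them in order, since the second follows quickly once the first is in hand.

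\textbf{Part 1: $\mathrm{Ball}(\cl A_L)$ is BPW dense in $\mathrm{Ball}(\tilde{\cl A})$.} Let $f \in \mathrm{Ball}(\tilde{\cl A})$, so $\|f\|_L \le 1$. By the previous lemma, the norm on $\tilde{\cl A}$ is computed as a supremum over finite sets $F$ of the quotient norms $\|\pi_F(f)\|$ (this is exactly the content of Lemma~\ref{normform} together with the identification of $\|\cdot\|_{M_n(\tilde{\cl A})}$ with $\|\cdot\|_{M_n(\cl A_L)}$). The plan is to use the finite-set characterization directly: for each finite $F \subseteq X$, since $\|\pi_F(f)\| \le 1$, I can find $g^F \in \cl A$ with $g^F|_F = f|_F$ and $\|g^F\|_{\cl A} \le 1 + \epsilon_F$. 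By rescaling, or by exploiting that $\cl A_L$ has the same quotient norms as $\cl A$ on each $\cl A/I_F$, I actually want these approximants to lie in $\mathrm{Ball}(\cl A_L)$, i.e.\ to have $\|g^F\|_L \le 1$. The directed family of finite sets then gives a net $(g^F)$ that agrees with $f$ on larger and larger finite sets, hence converges to $f$ pointwise, and is uniformly bounded in $\cl A_L$-norm. That net witnesses BPW density.

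The main obstacle, and where I expect to spend the real effort, is producing uniformly bounded approximants whose $\cl A_L$-norm (not merely $\cl A$-norm) is controlled by $1$. The function $g^F$ chosen to match $f$ on $F$ has small $\cl A$-norm, but I need its norm in $\cl A_L$, which is the supremum of quotient norms over \emph{all} finite sets $G$, not just $F$. The clean way around this is to work entirely inside the local picture: since $\|g^F\|_L = \sup_G \|\pi_G(g^F)\|$ and each $\|\pi_G(g^F)\| = \|\pi_G(g^F)\|_{\cl A_L}$ equals the corresponding $\cl A$-quotient norm (established in the renorming proposition), I should choose the approximant at the level of the finite-dimensional quotient $\cl A_L/I_F$ and lift. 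Concretely, I would fix a small $\epsilon$, and for each $F$ select $g^F$ so that $f|_F = g^F|_F$ with $\|g^F\|_L \le 1 + \epsilon$; then a standard rescaling of $g^F$ by $1/(1+\epsilon)$ adjusts the bound to $1$ at the cost of a pointwise error on $F$ that shrinks with $\epsilon$, and a diagonal argument over the directed set of pairs $(F, \epsilon)$ produces the desired net in the closed unit ball of $\cl A_L$ converging pointwise to $f$.

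\textbf{Part 2: $\tilde{\tilde{\cl A}} = \tilde{\cl A}$.} This is the assertion that $\tilde{\cl A}$ is BPW complete, and it should follow formally from Part~1. Suppose $h \in \tilde{\tilde{\cl A}}$, so $h$ is the pointwise limit of a uniformly bounded net $(h_\mu) \subseteq \tilde{\cl A}$, say with $\|h_\mu\|_L \le C$. By Part~1, each $h_\mu$ (after scaling into the unit ball) is a BPW limit of a uniformly bounded net from $\cl A_L = \cl A$, with the same norm bound. The plan is then to invoke an iterated-limits argument: a BPW limit of BPW limits is again a BPW limit, because the directed set of finite subsets of $X$ controls pointwise convergence one finite set at a time, exactly as in Lemma~\ref{normform}. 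For each finite $F$ I can match $h$ on $F$ by first choosing $\mu$ with $h_\mu$ close to $h$ on $F$, then choosing an element of $\cl A$ matching $h_\mu$ on $F$ with norm at most $C + \epsilon$; this exhibits $h$ as satisfying the finite-set criterion of Lemma~\ref{normform} with constant $C$, so $h \in \tilde{\cl A}$ and $\|h\|_L \le C$. Hence $\tilde{\tilde{\cl A}} \subseteq \tilde{\cl A}$, and the reverse inclusion is trivial, giving equality completely isometrically.
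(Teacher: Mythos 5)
Your proof is correct and uses essentially the same mechanism as the paper's: matching the target function on each finite set $F$ by an element of $\cl A$ with controlled norm (via Lemma~\ref{normform}, whose proof is exactly the $I_F$-correction the paper performs explicitly here), and then diagonalizing over the directed set of pairs $(F,\epsilon)$; your iterated-limits argument for $\tilde{\tilde{\cl A}}=\tilde{\cl A}$ is precisely the ``similar argument'' the paper alludes to. The only organizational difference is that you prove the density of $Ball(\cl A_L)$ directly, whereas the paper emphasizes the containment $\overline{\cl A_L}^{BPW}\subseteq\tilde{\cl A}$ (converting an $\cl A_L$-bounded net into an $\cl A$-bounded one); your observation that $\|\cdot\|_L\le\|\cdot\|_{\cl A}$ makes the norm control in Part 1 immediate is the right resolution of the ``obstacle'' you flag.
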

  \begin{proof} It can be easily checked that above is equivalent 
  to showing that $\cl A_L$ is BPW dense in $\tilde{\cl A}.$
  We'll only prove that $\overline{\cl A_L}^{BPW} \subseteq \tilde{\cl A},$
  since the other containment follows immediately by the definition of $\tilde{\cl A}$. \\
  \indent Let $\{f_{\lambda}\}$ be a net in $\cl A_L$ such that
  $f_{\lambda}\rightarrow f$ pointwise and $\sup_{\lambda}\|f_{\lambda}\|_{\cl A_L} < C.$
  Then for fixed $F \subseteq X$ and $\epsilon > 0,\ \exists\  \lambda_F $ such
  that $|f_{\lambda_F}(z) -f(z)| < \epsilon \ \forall z \in F.$ Also 
  since $\sup_{\lambda} \|f_{\lambda}\| < C,$ there exists 
  $g_{\lambda_F} \in I_F$ such that $\|f_{\lambda_F}+g_{\lambda_F}\| < C.$ 
  Note that the function $h_F= f_{\lambda_F}+g_{\lambda_F} \in \cl A$ 
  such that $\|h_F\|_{\cl A} < C,$ and $h_F \rightarrow f$ pointwise. 
  Hence, $f \in \tilde{\cl A}.$ Thus, $\cl A_L$ is BPW dense in $\tilde A.$\\
  \indent Finally, note that the argument similar to the above readily yields that 
  $\tilde{\cl A}$ is BPW complete.
  \end{proof}

  All the above lemmas can be summarized as the following theorem.
 
  \begin{thm}
  If $\cl A$ is an operator algebra of functions on $X,$ then
  $\tilde{\cl A}$ is a BPW complete local operator algebra of functions
  on $X$ which contains $\cl A_L$ completely isometrically as a BPW dense subalgebra.
  \end{thm}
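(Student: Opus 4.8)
The plan is to assemble the final theorem purely as a bookkeeping exercise that collects the four lemmas and the corollary already established, since each clause of the statement has been proved separately and the only work remaining is to verify they fit together consistently. Concretely, I would argue that the theorem asserts four things about $\tilde{\cl A}$: (i) it is an operator algebra of functions on $X$; (ii) it is local; (iii) it is BPW complete; and (iv) it contains $\cl A_L$ completely isometrically as a BPW dense subalgebra. Each of these is exactly the content of one of the preceding results, so the body of the proof is a sequence of citations rather than new estimates.

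First I would invoke the lemma stating that $\tilde{\cl A}$, equipped with the matrix norms of Definition~\ref{bpwdefn}, is an operator algebra, together with the subsequent lemma showing it is in fact a \emph{local} operator algebra of functions on $X$; this handles clauses (i) and (ii) at once. For clause (iii), I would cite Lemma~\ref{local}, which proves precisely that $\tilde{\tilde{\cl A}}=\tilde{\cl A}$, i.e.\ that $\tilde{\cl A}$ is BPW complete. For the completely isometric inclusion in clause (iv), I would appeal to the identity $\|(f_{i,j})\|_{M_n(\tilde{\cl A})}=\|(f_{i,j})\|_{M_n(\cl A_L)}$ for $(f_{i,j})\in M_n(\cl A)$, established at the end of the lemma comparing $\tilde{\cl A}$ with $\cl A_L$: this says the natural map $\cl A_L\to\tilde{\cl A}$ preserves all matrix norms, hence is a complete isometry. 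The BPW density of $\cl A_L$ in $\tilde{\cl A}$ is the first conclusion of Lemma~\ref{local}, where it is shown that $Ball(\cl A_L)$ is BPW dense in $Ball(\tilde{\cl A})$.

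The one point requiring a sentence of care, rather than a bare citation, is the reconciliation of the phrase ``contains $\cl A_L$'' with the fact that $\cl A_L$ and $\tilde{\cl A}$ share the \emph{same} underlying vector space operations on $\cl A$ but differ as sets: $\cl A_L$ is $\cl A$ re-normed, whereas $\tilde{\cl A}$ has genuinely more functions. I would note that the complete isometry identified above exhibits $\cl A_L$ as a subalgebra of $\tilde{\cl A}$ in the honest sense that the inclusion map is an algebra homomorphism preserving every matrix norm, and that its range is BPW dense by Lemma~\ref{local}. I do not anticipate a genuine obstacle here; the only risk is a clerical one, namely making sure the norm-comparison lemma is quoted for the matrix norms at every level $n$ and not merely for $n=1$, so that ``completely isometrically'' is fully justified. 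Thus the proof is essentially the single sentence that all four assertions have been verified in the lemmas above, and the theorem follows by combining them.
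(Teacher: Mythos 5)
Your proposal matches the paper exactly: the theorem is stated there with the preface that ``all the above lemmas can be summarized as the following theorem,'' so the intended proof is precisely the assembly of citations you give, including the norm identity $\|(f_{i,j})\|_{M_n(\tilde{\cl A})}=\|(f_{i,j})\|_{M_n(\cl A_L)}$ for the complete isometry and Lemma~\ref{local} for density and completeness. No gaps; your extra care about $\cl A_L$ versus $\tilde{\cl A}$ as sets is a reasonable clarification the paper leaves implicit.
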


  \begin{defn} Given an operator algebra of functions $\cl A$ on $X,$ we call $\tilde{\cl A}$ 
  the {\bf BPW completion of $\cl A.$}
  \end{defn}

  We now present a few examples to illustrate these concepts. We will delay the main 
  family of examples to the next section.

  \begin{ex} If $\cl A$ is a {\em uniform algebra,} then there exists a compact,
  Hausdorff space $X,$ such that $\cl A$ can be represented as a subalgebra 
  of $C(X)$ that separates points.  If we endow $\cl A$ with the matrix-normed
  structure that it inherits as a subalgebra of $C(X),$  namely,
  $\|(f_{i,j})\| = \|(f_{i,j})\|_{\infty} \equiv sup \{ \|(f_{i,j})\|_{M_n} : x \in X \},$
  then $\cl A$ is a local operator algebra of functions on $X.$  Indeed, to 
  achieve the norm, it is sufficient to take the supremum over all finite 
  subsets consisting of one point.  In this case the BPW completion 
  $\tilde{\cl A}$ is completely isometrically isomorphic to the subalgebra
  of $\ell^{\infty}(X)$ consisting of functions that are bounded, 
  pointwise limits of functions in $\cl A.$
  \end{ex}
  \begin{ex} Let $\cl A = A(\bb D)\subseteq C(\bb D^-)$ be the
  subalgebra of the algebra of continuous functions on the
  closed disk consisting of the functions that are analytic on the
  open disk $\bb D.$ Identifying $M_n(A(\bb D)) \subseteq M_n(C(\bb
  D^-))$ as a subalgebra of the algebra of continuous functions from
  the closed disk to the matrices, equipped with the supremum norm,
  gives $A(\bb D)$ it's usual operator algebra structure. With this
  structure it can be regarded as a local operator algebra of functions 
  on $\bb D$  or on $\bb D^-.$ If we regard it as a local operator algebra
  of functions on $\bb D^-,$ then $A(\bb D) \subsetneq \widetilde{A(\bb
   D)}.$ To see that the containment is strict, note that $f(z) =
  (1+z)/2 \in A(\bb D)$ and $f^n(z) \to \chi_{\{1\}},$ the
  characteristic function of the singleton $\{ 1\}.$

  However, if we regard $A(\bb D)$ as a local operator algebra of functions on $\bb D,$ then
  its BPW completion $\widetilde{A(\bb D)} = H^{\infty}(\bb D),$ the
  bounded analytic functions on the disk, with its usual operator
  structure.
  \end{ex}

  \begin{ex}
  Let $X=\epsilon{\bb D}, \ 0 < \epsilon < 1$ and 
  $\cl A=\{f \in H^{\infty}(\bb D): f:X \rightarrow \bb C\}.$
  If we endow $\cl A$ with the matrix-normed structure 
  on $H^{\infty}(\bb D),$ then $\cl A$ is an operator algebra 
  of functions on X. Also, it can be verified that $\cl A$ is a 
  local operator algebra of functions and that $\cl A = \tilde{\cl A}$. Indeed, if 
  $F=(f_{ij}) \in M_n(\cl A)$ with $\|(f_{ij}+I_Y)\|_{\infty} < 1
  \ \forall$ finite subset $Y\subseteq X,$ then $\exists \ 
  H_Y \in M_n(\cl A)$ such that $\|H_Y\|_{\infty} \leq 1$
  and $H_Y \rightarrow F$ pointwise on X. Note by Montel's 
  theorem $\exists$ a subnet $H_{Y'}$ and $ G \in M_n(H^{\infty}(\bb D))$
  such that $\|G\|_{\infty} \leq 1$ and $H_{Y'} \rightarrow G$ uniformly 
  on compact subsets of $\bb D.$ Thus, by the identity 
  theorem $F \equiv G$ on $\bb D.$ Hence, $\cl A$ is a 
  local operator algebra. Finally, by using 
  Lemma~\ref{local} and a similar argument, one 
  can also show that $\tilde{\cl A}= H^{\infty}(\bb D).$
  \end{ex}

  \begin{ex}
  Let $\cl A= H^{\infty}(\bb D)$ but endowed with a new norm. 
  Fix $b > 1,$ and set $\|F\|=\max\{\|F\|_{\infty}, \|F(\begin{pmatrix}
  0 & b\\0 & 0\end{pmatrix})\|\}, \ F\in M_n(\cl A).$ It can 
  be easily verified that $\cl A$ is a BPW complete operator
  algebra of functions. However, we also claim that $\cl A$ 
  is local. To prove this we proceed by contradiction. Suppose $\exists \ F=(f_{ij}) \in M_n(H^{\infty}(\bb D))$ such that 
  $\|F\| > 1 > c, $ where $c=  \sup_Y\|(f_{ij}+I_Y)\|$.\\
  In this case, 
  $\|F\|=\|F(\begin{pmatrix} 0 & b\\0 & 0\end{pmatrix})\|,$ since
  $\|(f_{ij}+I_Y)\|=\|F(\lambda)\|$ when $Y=\{\lambda\} .$
  Let $\epsilon=\frac{1-c}{4b}$ and $Y= \{ 0, \epsilon \} \subseteq \bb D,$ 
  then $ \exists \ G \  \in M_n(H^{\infty}(\bb D))$
  such that $G|_Y=0$ and $\|F+G\| < \frac{1+c}{2}.$ 
  Thus, we can write $B_Y(z)={\frac{z-\epsilon}{1-\bar{\epsilon}z}},$
  so that we can write $G(z)= zB_Y(z)H(z),$
  for some $ H \in M_n( H^{\infty}).$ It follows that $\|H\|_{\infty} <2,$ 
  since $\|G\|_{\infty} < 2.$ We now consider
  \begin{multline*}
  1  < \|F(\begin{pmatrix} 0 & b\\0 & 0\end{pmatrix})\| 
  \leq \|(F+G)(\begin{pmatrix} 0 & b\\0 & 0\end{pmatrix})\| +
  \|G(\begin{pmatrix} 0 & b\\0 & 0\end{pmatrix})\| \\
  \leq \frac{1+c}{2}+\|\begin{pmatrix} 0 & b G'(0)\\ 0 & 0 \end{pmatrix}\| 
  = \frac{1+c}{2} +b |B_Y(0)|\|H(0)\|\\ \leq  \frac{1+c}{2} +2b \epsilon 
  = \frac{1+c}{2} + 2b \frac{1-c}{4b}= 1 \text{ contradiction.}
  \end{multline*}
  \end{ex}


   \begin{ex}
   This is an example of a non-local algebra that arises from boundary
   behavior.
   Suppose $\cl A=\{ f \in A(\bb D)\subseteq C(\bb D^-): f: \bb D \rightarrow \bb C\}$
   equipped with the family of matrix norms $\|F\|=\max\{\|F\|_{\infty}, \|F(\begin{pmatrix}
   1 & 1\\0 & -1 \end{pmatrix})\|\}, F \in M_n(\cl A).$ Then it is easy to check that
   $\cl A $ is an operator algebra of functions on the set $\bb D.$ Also, it can be 
   verified that $\cl A$ is not local. To see this, note that $\|z\|= \|\begin{pmatrix}
   1 & 1\\0 & -1 \end{pmatrix}\|.$  For each $ Y=\{z_1,z_2,\ldots, z_n\},$
   we define $B_Y(z)=\Pi_{i=1}^{n}(\frac{z-z_i}{1-\bar{z_i}z})$ and choose 
   $h \in \cl A$ such that $h(1)=-\overline{B_Y(1)}$ and $h(-1)=\overline{B_Y(-1)}.$\\
   Let  $g(z)= z+ B_Y(z)h(z)\alpha ,$ where $\alpha =\frac{\|z\|-1}{\|h\|_{\infty}+\|z\|} > 0.$
   Then $ g \in \cl A$ and $g|_Y=z|_Y \Longrightarrow \|\pi_Y(z)\| =
   \|\pi_Y(g)\|\leq \|g\|\leq \max\{ 1+\alpha \|h\|_{\infty}, (1-\alpha)\|z\|\} < \|z\|.$\\
   Thus, $\sup_{Y\subseteq \bb D}\|\pi_Y(z)\| < \|z\|$ and hence $\cl A$
   is not local.
   \end{ex}

   \begin{ex} This example shows that one can easily build non-local
     algebras by adding ``values'' outside of the set X.
   Let $\cl A$ be the algebra of polynomials regarded as functions
   on the set $X=\bb D.$ Then $\cl A$ endowed with the matrix-normed
   structure as $\|(p_{ij})\|=\max\{\|(p_{ij})\|_{\infty}, \|(p_{ij}(2))\|\},$
   is an operator algebra of functions on the set X. To see that $\cl A$ is not local, let $p \in \cl A$ be
   such that $\|p\|_{\infty} < |p(2)|.$ For each finite subset $Y=\{z_1,\ldots,z_n\}$ of X,
   let $h_Y(z)= \Pi_{i=1}^{n}(z-z_i)$ and $g_Y(z)= p(z)- \alpha h_Y(z)p(2),$ where 
   $\alpha=\frac{|p(2)|-\|p\|_{\infty}}{2|p(2)|\|h_Y\|_{\infty}}>0.$
   Note that $\|g_Y\| \leq (1-\alpha)|p(2)|$ and
   $g_Y|_Y=p|_Y \Longrightarrow \|\pi_Y(p)\|=\|\pi_Y(g_Y)\| \leq
   \|g_Y\|\le (1- \alpha)|p(2)| < \|p\|.$
   Hence, it follows that $\cl A$ is not local.\\
   Finally, observe that in this case $\cl A$ cannot be BPW complete.
   For example, if we take $p_n= \frac{1}{3}\sum_{i=0}^{n}(\frac{z}{3})^i \in \cl A$ 
   then $p_n(z) \rightarrow f(z)=\frac{1}{3-z}  \ \forall \ z \in \bb D$
   and $\|p_n\| < \|f\| \Rightarrow \cl A_L \subsetneq \tilde{A}.$
   \end{ex}

   \begin{ex} It is still an open problem as to whether or not every
   unital contractive, homomorphism $\rho: H^{\infty}(\bb D) \to B(\cl
   H)$ is completely contractive. For a recent discussion of this
   problem see \cite{PR}. Let's assume that $\rho$ is a
   contractive homomorphism that is not completely contractive.
   Let $\cl B = H^{\infty}(\bb D),$ but endow it with the family of
   matrix-norms given by,
   \[ |||(f_{i,j})||| = max \{ \|(f_{i,j})\|_{\infty},
   \|(\rho(f_{i,j}))\| \}. \]
   Note that $|||f||| = \|f\|_{\infty},$ for $f \in \cl B.$

   It is easily checked that $\cl B$ is a BPW complete operator algebra of functions
   on $\bb D.$  However, since every contactive homomorphism of $A(\bb
   D)$ is completely contractive, we have that for $(f_{i,j}) \in
   M_n(A(\bb D)),  |||(f_{i,j})||| = \|(f_{i,j})\|_{\infty}.$
   If $Y=\{x_1,...,x_k \}$ is a finite subset of $\bb D$ and $F=(f_{i,j}) \in M_n(\cl B),$ 
   then there is $G=(g_{i,j}) \in M_n(A(\bb D)),$ such that $F(x) = G(x)$ for all $x \in Y,$ 
   and $\|G\|_{\infty} = \|F\|_{\infty}.$ Hence, $\|\pi_Y^{(n)}(F)\| \le \|F\|_{\infty}.$ 
   Thus, $\sup_Y \|\pi_Y^{(n)}(F)\| = \|F\|_{\infty}.$
   It follows that $\cl B$ is not local and that $\tilde{\cl B}
   = \cl B_L = H^{\infty}(\bb D),$ with its usual supremum norm operator algebra
   structure.

   In particular, if there does exist a contractive but not completely
   contractive representation of $H^{\infty}(\bb D),$ then we have
   constructed an example of a non-local BPW complete operator algebra of
   functions on $\bb D.$
   \end{ex}

   \section{A Characterization of Local Operator Algebras of Functions}

  The main goal of this section is to prove that every BPW complete local 
  operator algebra of functions is completely isometrically isomorphic to 
  the algebra of multipliers on a reproducing kernel Hilbert space of 
  vector-valued functions.  Moreover, we wil show that every such algebra 
  is a dual operator algebra in the precise sense of \cite{BLM}. We will 
  then prove that for such BPW algebras, weak*-convergence and BPW 
  convergence coincide on bounded balls.

  Given a set $X$ and a Hilbert space $\cl H,$ then by a {\em reproducing kernel Hilbert space of $\cl H$-valued functions,}
  we mean a vector space $\cl L$ of $\cl H$-valued functions that is 
  equipped with a norm and an inner product that makes it a Hilbert space 
  and which has the property that for every $x \in X,$ the evaluation map 
  $E_x: \cl L \to \cl H,$ is a bounded, linear map.
  Recall that given a Hilbert space $\cl H,$ a matrix of operators, 
  $T=(T_{i,j}) \in M_k(B(\cl H))$ is regarded as an operator on the 
  Hilbert space $\cl H^{(k)} \equiv \cl H \otimes \bb C^k,$ which 
  is the direct sum of $k$ copies of $\cl H.$ A function 
  $K:X \times X \to B(\cl H),$ where $H$ is a Hilbert space, 
  is called a {\em positive definite operator-valued function on X,} 
  provided that for every finite set of (distinct)points $\{x_1,...,x_k \}$ in $X$, 
  the operator-valued matrix, $(K(x_i,x_j))$ is positive semidefinite. 
  Given a reproducing kernel Hilbert space of $\cl H$-valued functions, 
  if we set $K(x,y) = E_xE_y^*,$ then $K$ is positive definite and is 
  called the {\em reproducing kernel of $\cl L.$}   There is a converse 
  to this fact, generally called {\em Moore's theorem,} which states 
  that given any positive definite operator-valued function $K:X \times X \to B(\cl H),$ 
  then there exists a unique reproducing kernel Hilbert space of $\cl H$-valued 
  functions on X, such that $K(x,y) = E_xE_y^*.$ We will denote this space by 
  $\cl L(K, \cl H)$.
  Finally, given any reproducing kernel Hilbert space $\cl L$ of $\cl H$-valued 
  functions with reproducing kernel $K,$ a function $f:X \to \bb C$ is called
  a {\em multiplier} provided that for every $g \in \cl L,$ the function 
  $fg \in \cl F.$  In this case it follows by an application of the closed 
  graph theorem that the map $M_f: \cl L \to \cl L,$ defined by $M_f(g) =fg,$ 
  is a bounded, linear map.  The set of all multipliers is denoted $\cl M(K)$ 
  and is easily seen to be an algebra of functions on $X$ and a subalgebra of $B(\cl L).$
  The reader can find proofs of the above facts in \cite{BM} \cite{DAJH}.
  Also, we refer to the fundamental work of Pedrick\cite{Pe} for further treatment
  of vector-valued reproducing kernel Hilbert spaces.

  \begin{lemma}\label{multdual}  Let $\cl L$ be a reproducing kernel Hilbert 
  space of $\cl H$-valued functions with reproducing kernel $K:X \times X \to B(\cl H).$
  Then $\cl M(K) \subseteq B(\cl L)$ is a weak*-closed subalgebra.
  \end{lemma}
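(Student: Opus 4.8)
The plan is to exhibit $\cl M(K)$, viewed as the set of multiplication operators $\{M_f\} \subseteq B(\cl L)$, as an intersection of weak*-closed linear subspaces, and separately to observe that it is closed under products. Closure under products is immediate, since $M_f M_g = M_{fg}$, so the real content is the weak*-closedness.

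The key is a pointwise characterization of multiplication operators in terms of the evaluation maps $E_x \colon \cl L \to \cl H$. First I would record that $T \in B(\cl L)$ equals $M_f$ for some multiplier $f$ if and only if for every $x \in X$ one has $E_x T \in \bb C E_x$, that is, $E_x T$ is a scalar multiple of $E_x$ inside $B(\cl L, \cl H)$. Indeed, if $f \in \cl M(K)$ then $E_x M_f g = (fg)(x) = f(x) g(x) = f(x) E_x g$, so $E_x M_f = f(x) E_x$. Conversely, given $T$ with $E_x T = f(x) E_x$ for every $x$ (where $f(x)$ is the uniquely determined scalar when $E_x \neq 0$, and is set to $0$ when $E_x = 0$), one gets $(Tg)(x) = f(x) g(x)$ for all $g$ and all $x$, whence $fg = Tg \in \cl L$ for every $g \in \cl L$; thus $f$ is a multiplier and $T = M_f$. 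The degenerate points where $E_x = 0$ cause no trouble, since there $g(x) = 0$ for every $g$.

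Granting this, write $\cl M(K) = \bigcap_{x \in X} V_x$, where $V_x = \{T \in B(\cl L) : E_x T \in \bb C E_x\}$. Each $V_x$ is a linear subspace, being the preimage of the line $\bb C E_x$ under the linear map $\Phi_x \colon T \mapsto E_x T$. The two facts to verify are: (i) $\bb C E_x$ is weak*-closed in $B(\cl L, \cl H)$, which holds because every finite-dimensional subspace of a Hausdorff topological vector space is closed; and (ii) $\Phi_x$ is continuous from $(B(\cl L), w^*)$ to $(B(\cl L, \cl H), w^*)$. For (ii) I would use the trace-class pairing: for trace-class $S \colon \cl H \to \cl L$, cyclicity of the trace gives $\mathrm{tr}\bigl((E_x T) S\bigr) = \mathrm{tr}\bigl(T (S E_x)\bigr)$, and $S E_x$ is again trace class, so $T \mapsto \mathrm{tr}\bigl((E_x T) S\bigr)$ is one of the defining weak*-functionals on $B(\cl L)$. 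Hence $\Phi_x$ is weak*-continuous, each $V_x$ is weak*-closed, and the intersection $\cl M(K)$ is weak*-closed.

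The main obstacle is the continuity statement (ii): one must take care to pass between the weak* topologies on $B(\cl L)$ and on $B(\cl L, \cl H)$ using the correct preduals of trace-class operators, and to see via cyclicity that left multiplication by the fixed bounded map $E_x$ is weak*-continuous. An alternative route, staying inside $B(\cl L)$, is to invoke the Krein--Smulian theorem: since $\{M_f\}$ is a subspace, it suffices to show each ball $\{M_f : \|M_f\| \le r\}$ is weak*-closed, and on bounded sets weak*-convergence coincides with the weak operator topology; a net $M_{f_\lambda} \to T$ then forces $f_\lambda(x) \to f(x)$ pointwise (testing against the reproducing vectors $E_x^* h$ and using $M_{f_\lambda}^* E_x^* h = \overline{f_\lambda(x)} E_x^* h$), after which the computation above identifies $T$ with $M_f$.
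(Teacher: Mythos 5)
Your proposal is correct, and its main argument is genuinely different from the paper's. The paper invokes the Krein--Smulian theorem to reduce to the unit ball, takes a weak*-convergent net $M_{f_\lambda} \to T$ there, extracts the pointwise limit $f(x) = \lim_\lambda f_\lambda(x)$ by pairing against the reproducing vectors $E_x^*h$, and then verifies $E_x(Tg) = f(x)g(x)$, i.e.\ $T = M_f$ --- essentially the route you sketch only as your alternative at the end. Your primary route instead characterizes the multiplication operators intrinsically, as $\bigcap_{x \in X} V_x$ with $V_x = \{T : E_xT \in \bb C E_x\}$, and gets weak*-closedness of each $V_x$ from the weak*-to-weak*-continuity of $T \mapsto E_xT$ (correctly justified via the trace-class pairing and $\operatorname{tr}\bigl((E_xT)S\bigr) = \operatorname{tr}\bigl(T(SE_x)\bigr)$) together with the closedness of the finite-dimensional subspace $\bb C E_x$. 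Both the characterization $T = M_f \iff E_xT \in \bb C E_x$ for all $x$ and the treatment of degenerate points with $E_x = 0$ check out. What your approach buys is the avoidance of Krein--Smulian and of any boundedness reduction: you exhibit the multiplier algebra directly as an intersection of weak*-closed subspaces, which is structurally cleaner, at the cost of having to handle the predual pairing on $B(\cl L, \cl H)$ carefully. The paper's argument is more elementary in its functional-analytic prerequisites (only reproducing vectors and WOT on bounded sets) but leans on Krein--Smulian. Either proof is acceptable; since you also reproduce the paper's argument as a fallback, nothing is missing.
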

  \begin{proof} It is enough to show that the unit ball is weak*-closed by the 
  Krein-Smulian theorem. So let $\{ M_{f_{\lambda}} \}$ be a net of multipliers 
  in the unit ball of $B(\cl L)$ that converges in the weak*-topology to an 
  operator $T.$ We must show that $T$ is a multiplier.

  Let $x \in X$ be fixed and assume that there exists $g \in \cl L,$ with 
  $g(x)=h \ne 0.$ Then $\langle Tg, E_x^*h \rangle_{\cl L} = \lim_{\lambda} \langle M_{f_{\lambda}}g, E_x^*h \rangle_{\cl L} = \lim_{\lambda} \langle        E_x(M_{f_{\lambda}}g), h \rangle_{\cl H} = \lim_{\lambda} f_{\lambda}(x) \|h\|^2.$
  This shows that at every such $x$ the net $\{ f_{\lambda}(x) \}$ converges 
  to some value. Set $f(x)$ equal to this limit and for all other $x$'s set 
  $f(x) = 0.$  We claim that $f$ is a multiplier and that $T=M_f.$

  Note that if $g(x) =0$ for every $g \in \cl L,$ then $E_x=E_x^*=0.$
  Thus, we have that for any $g \in \cl L$ and any $h \in \cl H,$
  $\langle E_x(Tg), h \rangle_{\cl H} = \lim_{\lambda} \langle E_x(M_{f_{\lambda}}g)
  , h \rangle_{\cl H} = \lim_{\lambda} f_{\lambda}(x) \langle g(x), h \rangle_{\cl H} = f(x) \langle g(x) , h \rangle_{\cl H}.$
  Since this holds for every $h \in \cl H,$ we have that 
  $E_x(Tg) = f(x)g(x),$ and so $T = M_f$ and $f$ is a multiplier.
  \end{proof}  

  Every weak*-closed subspace of $V \subseteq B(\cl H)$ has a predual 
  and it is the operator space dual of this predual.
  Also, if an abstract operator algebra is the dual of an operator space,
  then it can be represented completely isometrically and 
  weak*-continuously as a weak*-closed subalgebra of the bounded 
  operators on some Hilbert space.  For this reason an operator algebra 
  that has a predual as an operator space is called a {\em dual operator algebra.}  
  See the book of \cite{BLM} for the proofs of these facts.  
  Thus, in summary, the above lemma shows that every multiplier 
  algebra is a dual operator algebra. 

  \begin{thm} Let $\cl L$ be a reproducing kernel Hilbert space of $\cl H$-valued
  functions with reproducing kernel $K:X \times X \to B(\cl H)$ and let 
  $\cl M(K) \subseteq B(\cl L)$ denote the multiplier algebra, endowed with 
  the operator algebra structure that it inherits as a subalgebra. If $K(x,x) \ne 0,$
  for every $x \in X$ and $\cl M(K)$ separates points on $X,$ then $\cl M(K)$ is a 
  BPW complete local dual operator algebra of functions on $X.$
  \end{thm}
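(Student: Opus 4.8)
**The plan is to verify the four defining properties of an operator algebra of functions, and then separately establish the BPW completeness and locality.**

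We must show that $\cl M(K)$ satisfies conditions (1)--(4) of the definition of an operator algebra of functions, and then that it is BPW complete and local.

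First I would dispense with the definitional axioms. Condition (1) holds since $\cl M(K)$ is an algebra of functions on $X$ by construction. Condition (2) is exactly the hypothesis that $\cl M(K)$ separates points, together with the observation that the constant function $1$ is always a multiplier (acting as the identity operator $M_1 = I$ on $\cl L$). Condition (3) holds because $\cl M(K) \subseteq B(\cl L)$ is a concrete operator algebra, so its inherited matrix norms automatically satisfy the BRS axioms. The crux of the definitional part is condition (4), boundedness of the evaluation functionals $\pi_x(f) = f(x)$, and this is where the hypothesis $K(x,x) \ne 0$ enters: since $K(x,x) = E_x E_x^*$, the condition $K(x,x) \ne 0$ guarantees $E_x \ne 0$, so there exists $g \in \cl L$ with $g(x) = h \ne 0$. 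For such $g$ and any multiplier $f$, one has $M_f g = fg$, whence $E_x(M_f g) = f(x) g(x) = f(x) h$; taking norms and using $\|M_f g\| \le \|M_f\| \|g\|$ gives $|f(x)| \|h\| \le \|E_x\| \|M_f\| \|g\|$, so $\pi_x$ is bounded with norm controlled by $\|E_x\|\|g\|/\|h\|$. (This is essentially the computation already carried out in the proof of Lemma~\ref{multdual}.)

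Next I would address the dual-algebra and BPW-completeness claims, which I expect to be the main substance. Lemma~\ref{multdual} already shows $\cl M(K)$ is weak*-closed in $B(\cl L)$, hence is a dual operator algebra by the cited predual facts from \cite{BLM}. For BPW completeness, the key is to connect pointwise-bounded (BPW) convergence with the weak*-topology on the ball of $\cl M(K)$. Given a uniformly bounded net $f_\lambda \in \cl M(K)$ converging pointwise to some $f:X \to \bb C$, the uniform bound means $\{M_{f_\lambda}\}$ lies in a fixed multiple of the unit ball of $B(\cl L)$, which is weak*-compact; so some subnet converges weak* to an operator $T$. By the argument in Lemma~\ref{multdual}, $T = M_g$ for a multiplier $g$, and the weak*-limit forces $g(x) = \lim_\lambda f_\lambda(x) = f(x)$ at each $x$ where $E_x \ne 0$ — which, by $K(x,x)\ne 0$, is every $x \in X$. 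Hence $f = g \in \cl M(K)$, proving BPW completeness. The delicate point here is matching the BPW scalar limit against the weak* operator limit via the reproducing-kernel evaluation identity $\langle M_{f_\lambda} g, E_x^* h\rangle = f_\lambda(x)\langle g(x), h\rangle$.

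Finally, for locality I would exhibit that the multiplier norm is recovered as a supremum over finite subsets $F \subseteq X$. The natural strategy is to use the ideal $I_F$ of multipliers vanishing on $F$ and show $\|(f_{i,j})\|_{M_n(\cl M(K))} = \sup_F \|\pi_F^{(n)}((f_{i,j}))\|$. The inequality $\ge$ is automatic since each $\pi_F$ is completely contractive. For the reverse, the geometry of $\cl L$ localizes: the action of $M_f$ on $\cl L$ is determined by its restriction to the finite-dimensional subspaces spanned by the kernel sections $\{E_x^* h : x \in F\}$, and the norm of a multiplier matrix should be approximable by its compressions to these finite pieces. The main obstacle will be making this compression argument rigorous — showing that testing $M_f$ against vectors supported on finitely many points $F$ recovers the full operator norm in the limit. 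I would likely establish this by relating the quotient $\cl M(K)/I_F$ to multipliers of the restricted kernel $K|_{F \times F}$ and invoking that the finite sections $\{E_x\}_{x \in F}$ exhaust $\cl L$ as $F$ grows, so that $\sup_F$ of the compressed norms reaches the full norm. This localization step is the part requiring genuine care, as it is precisely where the reproducing-kernel structure does work that the abstract operator-algebra axioms alone cannot.
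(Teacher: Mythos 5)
Your treatment of conditions (1)--(4), the dual-algebra claim, and BPW completeness is sound. For BPW completeness you take a genuinely different route from the paper: you extract a weak*-convergent subnet from the bounded net $\{M_{f_\lambda}\}$ and identify its limit as $M_f$ via Lemma~\ref{multdual}, whereas the paper argues directly with kernels: from $\|f_\lambda\|\le C$ and $\|g\|=M$ it gets that $(MC)^2K(x,y)-\langle f_\lambda(x)g(x),f_\lambda(y)g(y)\rangle$ is positive definite, passes to the pointwise limit, and reads off $fg\in\cl L$ with $\|fg\|\le MC$. Both work; yours leans on weak*-compactness and the prior lemma, the paper's is self-contained and stays entirely at the level of positive definite functions. (Minor point: for condition (4) the paper actually gets $\|F(x)\|\le\|F\|$, i.e.\ complete contractivity of evaluations, directly from $(C^2I_n-F(x)F(x)^*)\otimes K(x,x)\ge 0$ plus $K(x,x)\ne 0$; your bound $\|E_x\|\|g\|/\|h\|$ suffices for the definition, since the general proposition on operator algebras of functions then forces $\|\pi_x\|=1$.)

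The locality step is where your proposal has a genuine gap, and you flag it yourself. The missing ingredient is the characterization of the multiplier norm as a family of \emph{finite} positivity conditions: $\|F\|_{M_n(\cl M(K))}\le C$ if and only if $\bigl((C^2I_n-F(x_i)F(x_j)^*)\otimes K(x_i,x_j)\bigr)\ge 0$ for every finite $\{x_1,\dots,x_k\}\subseteq X$. With this in hand the paper's argument is three lines: if $\sup_Y\|\pi_Y^{(n)}(F)\|<C<\|F\|$, pick for each finite $Y$ some $G$ with $\|G\|<C$ and $G=F$ on $Y$; the positivity condition for $G$ on $Y$ coincides with that for $F$ on $Y$, so $F$ satisfies the condition on every finite set and $\|F\|\le C$, a contradiction. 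Your compression sketch can be repaired, but not as stated: $M_f$ does \emph{not} act on, and is not determined by, the finite-dimensional spans of kernel sections; it is $M_f^*$ that leaves $\cl L_F=\operatorname{span}\{E_x^*h: x\in F,\ h\in\cl H\}$ invariant, via $M_f^*E_x^*h=\overline{f(x)}E_x^*h$. That invariance is also what connects the compression to the quotient norm: since $M_{f+g}^*|_{\cl L_F}=M_f^*|_{\cl L_F}$ for every $g\in I_F$, one gets $\|M_f^*|_{\cl L_F}\|\le\|\pi_F(f)\|$, and density of $\bigcup_F\cl L_F$ in $\cl L$ then gives $\|M_f\|=\sup_F\|M_f^*|_{\cl L_F}\|\le\sup_F\|\pi_F(f)\|$. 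Without either the positivity characterization or this eigenvector/invariance observation, the assertion that ``testing $M_f$ against vectors supported on finitely many points recovers the full norm'' does not by itself bound the \emph{quotient} norm, which is an infimum over all representatives agreeing with $f$ on $F$.
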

  \begin{proof} The multiplier norm of a given matrix-valued function 
  $F= (f_{i,j})\in M_n(\cl M(K))$ is the least constant $C$ such that
  $((C^2 I_n - F(x_i)F(x_j)^*) \otimes K(x_i,x_j)) \ge 0,$ for all sets 
  of finitely many points, $Y= \{x_1,..., x_k \} \subseteq X.$
  Applying this fact to a set consisting of a single point, we have that 
  $(C^2I_n - F(x)F(x)^*) \otimes K(x,x) \ge 0,$ and it follows that 
  $C^2I_n - F(x)F(x)^* \ge 0.$ Thus, $\|F(x)\| \le C= \|F\|$ and we have 
  that point evaluations are completely contractive on $\cl M(K).$ Since 
  $\cl M(K)$ contains the constants and separates points by hypothesis, 
  it is an operator algebra of functions on $X.$

  Suppose that $\cl M(K)$ was not local, then there would exist 
  $F \in M_n(\cl M(K)),$ and a real number $C,$ such that 
  $\sup_Y \|\pi_Y^{(n)}\| < C < \|F\|.$
  Then for each finite set $Y= \{x_1,...,x_k \}$ we could choose 
  $G \in M_n(\cl M(K)),$ with $\|G\| < C,$ and $G(x) = F(x),$ for every $x \in Y.$
  But then we would have that $((C^2I_n - F(x_i)F(x_j)^*) \otimes K(x_i,x_j)) = ((C^2 I_n - G(x_i)G(x_j)^*) \otimes K(x_i,x_j)) \ge 0,$
  and since $Y$ was arbitrary, $\|F\| \le C,$ a contradiction. 
  Thus, $\cl M(K)$ is local.

  Finally, assume that $f_{\lambda} \in \cl M(K),$ is a net in $\cl M(K),$ 
  with $\|f_{\lambda}\| \le C,$ and $\lim_{\lambda}(x) = f(x),$ pointwise.
  If $g \in \cl L$ with $\|g\|_{\cl L} = M,$ then $(MC)^2K(x,y) - \langle f_{\lambda}(x)g(x),f_{\lambda}(y)g(y) \rangle$ 
  is positive definite. By taking pointwise limits, we obtain that
  $(MC)^2K(x,y) - \langle f(x)g(x), f(y)g(y) \rangle$ is positive definite. 
  From the characterization of functions and their norms in a reproducing 
  kernel Hilbert space, this implies that $fg \in \cl L,$ with 
  $\|fg\|_{\cl L} \le MC.$ Hence, $f \in \cl M(K)$ with $\|M_f\| \le C.$ 
  Thus, $\cl M(K)$ is BPW complete.
  \end{proof}

  In general, $\cl M(K)$ need not separate points on $X.$ In fact, it 
  is possible that $\cl L$ does not separate points and if $g(x_1) =g(x_2),$
  for every $g \in \cl L,$ then necessarily $f(x_1) =f(x_2)$ for every $f \in \cl M(K).$

  Following \cite{Pa2}, by a {\em $k$-idempotent operator algebra,} $\cl C,$ we mean that 
  we are given $k$ operators, $\{ E_1,...,E_k \}$ on some Hilbert space $\cl H,$ 
  such that $E_iE_j = E_jE_i = \delta_{i,j} E_i,$  $I= E_1+ \cdots + E_k$ 
  and $\cl C = span \{ E_1,..., E_k \}.$

  \begin{prop} Let $\cl C= span \{ E_1,..., E_k \}$ be a $k$-idempotent operator
  algebra on the Hilbert space $\cl H,$ let $Y= \{ x_1,...,x_k\}$ be a set 
  of $k$ distinct points and define $K:Y \times Y \to B(\cl H)$ by 
  $K(x_i,x_j) = E_iE_j^*.$ Then $K$ is positive definite and $\cl C$ 
  is completely isometrically isomorphic to $\cl M(K)$ via the map that 
  sends $a_1E_1 + \cdots + a_kE_k$ to the multiplier $f(x_i) = a_i.$
  \end{prop}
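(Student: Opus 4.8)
The plan is to realize $\cl L = \cl L(K,\cl H)$ concretely and to exhibit each multiplier as a single operator on $\cl H$ that is unitarily conjugate to an element of $\cl C$. The engine of the argument is the column operator $V = \mathrm{col}(E_1,\ldots,E_k)\colon \cl H \to \cl H^{(k)}$ given by $V\eta = (E_1\eta,\ldots,E_k\eta)$. Positive definiteness of $K$ is then immediate, since the operator matrix $(K(x_i,x_j)) = (E_iE_j^*)$ is exactly $VV^* \ge 0$. To identify $\cl L$, I would first note that $V$ is injective: if $E_l\eta = 0$ for all $l$ then $\eta = \sum_l E_l\eta = 0$ because $\sum_l E_l = I$. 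Hence $\mathrm{ran}(V)$ carries a unique inner product making $V$ a unitary of $\cl H$ onto $\mathrm{ran}(V)$. Equipping $\mathrm{ran}(V)$ with this norm and the evaluations $E_{x_i}(V\eta) = E_i\eta$, a one-line computation gives $E_{x_i} = E_iV^*$ and therefore $E_{x_i}E_{x_j}^* = E_iV^*VE_j^* = E_iE_j^* = K(x_i,x_j)$; by the uniqueness half of Moore's theorem this space \emph{is} $\cl L(K,\cl H)$, and $V$ is a unitary of $\cl H$ onto $\cl L$.

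The heart of the proof is the computation of the multipliers. For scalars $a_1,\ldots,a_k$ and the function $f$ with $f(x_l) = a_l$, the idempotent relations give $E_m\bigl(\sum_l a_lE_l\bigr)\eta = a_mE_m\eta$, so that $M_f(V\eta) = (a_1E_1\eta,\ldots,a_kE_k\eta) = V\bigl((\sum_l a_lE_l)\eta\bigr)$. This shows at once that every function on $Y$ is a multiplier, that $\cl M(K)$ is all functions on $Y$, and that $M_f = V\bigl(\sum_l a_lE_l\bigr)V^*$ is a unitary conjugate of $\sum_l a_lE_l \in \cl C$. In particular the map $\rho\colon \sum_l a_lE_l \mapsto f$ is a well-defined bijection onto $\cl M(K)$; it is multiplicative and unital by $E_lE_m = \delta_{lm}E_l$ and $\sum_l E_l = I$, and it is isometric because $M_f$ and $\sum_l a_lE_l$ are unitarily equivalent.

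Complete isometry follows by running the identical conjugation one matrix level higher. For $F = (f_{pq}) \in M_n(\cl M(K))$ with $f_{pq}(x_l) = a_l^{pq}$, the multiplier operator $M_F$ on $\cl L \otimes \bb C^n$ satisfies $(V\otimes I_n)^* M_F (V\otimes I_n) = \bigl(\sum_l a_l^{pq}E_l\bigr)_{pq}$, and the right-hand side is precisely the image of $F$ in $M_n(\cl C) \subseteq B(\cl H \otimes \bb C^n)$. Since $V\otimes I_n$ is unitary, $\|M_F\| = \|F\|_{M_n(\cl C)}$, so $\rho$ is a complete isometry.

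The step I expect to be the main obstacle is the identification of the norm on $\cl L$. The delicate point is that $\cl L$ sits inside $\cl H^{(k)}$ as a \emph{set} but does \emph{not} carry the ambient $\cl H^{(k)}$ norm; rather it carries the norm pulled back from $\cl H$ through $V$, and establishing this cleanly is exactly what forces one to appeal to the uniqueness clause of Moore's theorem (via the verification $E_{x_i}E_{x_j}^* = K(x_i,x_j)$). Once $V$ is known to be a unitary onto $\cl L$, all remaining assertions are formal consequences of the orthogonality relations $E_lE_m = \delta_{lm}E_l$ and the completeness relation $\sum_l E_l = I$.
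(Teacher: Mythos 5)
Your proof is correct, and it takes a genuinely different route from the paper's. The paper works entirely with the positivity criterion for the multiplier norm: $\|M_f\|\le C$ iff $P=((C^2-a_i\bar{a_j})E_iE_j^*)\ge 0$, and it evaluates the quadratic form $\langle Pv,v\rangle$ after substituting $h=\sum_j E_j^*v_j$ to get $\langle Pv,v\rangle = C^2\|h\|^2-\|B^*h\|^2$, from which the isometry follows in both directions; the complete isometry is then only asserted to be ``similar but notationally cumbersome.'' You instead realize $\cl L(K,\cl H)$ concretely as the range of the column map $V=\mathrm{col}(E_1,\dots,E_k)$ with the pulled-back inner product (invoking the uniqueness clause of Moore's theorem, after checking $V$ is injective via $\sum_l E_l=I$), and then exhibit the intertwining $M_fV=VB$, so that $M_f=VBV^*$ is a unitary conjugate of $B=\sum_l a_lE_l$. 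The two arguments share the same underlying algebra --- your $V^*v=\sum_j E_j^*v_j$ is exactly the paper's substitution, and the paper's test vectors $v=\sum_j e_j\otimes E_j^*h$ are your $Vh$ --- but your organization buys several things the paper's does not make explicit: the complete isometry comes for free by tensoring with $I_n$ rather than by a ``notationally cumbersome'' repetition, you identify $\cl M(K)$ as \emph{all} functions on $Y$ (hence surjectivity of the map is transparent), and you exhibit the multiplication operators concretely. The one delicate point, which you correctly flag, is that $\mathrm{ran}(V)$ carries the norm transported from $\cl H$ rather than the ambient $\cl H^{(k)}$ norm; with that in place the argument is complete.
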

  \begin{proof} It is easily checked that $K$ is positive definite. We 
  first prove that the map is an isometry. Given
  $B= \sum_{i=1}^k a_i \otimes E_i \in \cl C,$ let $f:Y \to \bb C$ be 
  defined by $f(x_i) = a_i.$  We have that $f \in \cl M(K)$ with $\|f\| \le C$
  if and only if $P= ((C^2- f(x_i)f(x_j)^*)K(x_i,x_j))$ is positive 
  semidefinite in $B(\cl H^{(k)}).$  
  Let $v = e_1 \otimes v_1 + \cdots e_k \otimes v_k \in \cl H^{(k)},$ 
  let $h = \sum_{j=1}^k E_j^*v_j$ and note that $E_j^*h = E_j^*v_j.$
  Finally, set $h = \sum_{i=1}^k h_i.$
  Thus,  \begin{multline*} \langle Pv,v \rangle =
  \sum_{i,j=1}^k (C^2 - a_i\bar{a_j}) \langle E_iE_j^*v_j,v_i \rangle =
   \\ \sum_{i,j=1}^k (C^2 - a_i \bar{a_j})        \langle E_j^* h, E_i^* h \rangle = C^2\|h\|^2 - \langle B^*h, B^*h \rangle
   = \\  C^2 \|h\|^2 - \|B^*h\|^2. \end{multline*}

  Hence, $\|B\| \le C$ implies that $P$ is positive and so 
  $\|M_f\| \le \|B\|.$  For the converse, given any $h$ let 
  $v = \sum_{j=1}^k e_j \otimes E_j^* h,$ and note that $\langle Pv,v \rangle \ge 0,$
  implies that $\|B^*h\| \le C,$ and so $\|B\| \le \|M_f\|.$

  The proof of the complete isometry is similar but notationally cumbersome.
  \end{proof}

  \begin{thm}\label{tilde-multiplier}
  Let $\cl A $ be an operator algebra of functions on the set X then there
  exists a Hilbert space, $\ff C$  and a positive definite function 
  $K:X \times X \rightarrow B(\ff C)$ such that $\cl M(K)=\tilde{\cl A}$ 
  completely isometrically.
  \end{thm}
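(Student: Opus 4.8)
The plan is to construct the Hilbert space $\ff C$ and kernel $K$ directly out of the local structure of $\cl A$, using the $k$-idempotent picture that has been set up. By the theorem on $\cl M(K)$ and by the results culminating in the summary theorem, $\tilde{\cl A}$ is a BPW complete local operator algebra of functions, and the norm on $M_n(\tilde{\cl A})$ is computed as a supremum over finite subsets $F$ of the quotient norms $\|\pi_F^{(n)}(\cdot)\|$. Since each quotient $\tilde{\cl A}/\tilde I_F$ is a $k$-idempotent operator algebra (with $k = |F|$), the preceding Proposition represents it completely isometrically as a multiplier algebra $\cl M(K_F)$ for a positive definite kernel $K_F : F \times F \to B(\ff C_F)$. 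So at each finite level we already have a kernel; the task is to assemble these local kernels into a single global kernel on all of $X$.

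First I would, for each finite $F \subseteq X$, invoke the Proposition to obtain a Hilbert space $\ff C_F$ and a positive definite $K_F$ on $F\times F$ so that $\tilde{\cl A}/\tilde I_F \cong \cl M(K_F)$ completely isometrically. The natural way to globalize is to form the direct sum $\ff C = \bigoplus_F \ff C_F$ over all finite subsets $F$, and define $K : X \times X \to B(\ff C)$ by letting $K(x,y)$ act as $K_F(x,y)$ on the summand $\ff C_F$ whenever both $x,y \in F$ and as $0$ on that summand otherwise. Each summand contributes a positive definite block, so $K$ is positive definite; one must check boundedness/convergence of the direct sum, which can be arranged by normalizing each $K_F$ (the idempotents $E_i^F$ can be taken of norm controlled independently of $F$, since point evaluations have norm $1$ by the opening Proposition). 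The multiplier norm on $\cl M(K)$ is then, by the single-point and finite-point positivity criterion used in the $\cl M(K)$ theorem, exactly the supremum over $F$ of the multiplier norms on $\cl M(K_F)$.

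The identification map sends $f \in \tilde{\cl A}$ to the function $x \mapsto f(x)$, which is a multiplier for $K$: on each block $\ff C_F$ its action is the multiplier $M_{f|_F}$ coming from $\cl M(K_F)$, and these are uniformly bounded precisely because $f \in \tilde{\cl A}$ has finite local norm. The complete isometry then reads off as
\begin{align*}
\|(f_{i,j})\|_{M_n(\tilde{\cl A})}
  &= \sup_F \|\pi_F^{(n)}(f_{i,j})\|_{M_n(\tilde{\cl A}/\tilde I_F)} \\
  &= \sup_F \|(f_{i,j}|_F)\|_{M_n(\cl M(K_F))}
   = \|(f_{i,j})\|_{M_n(\cl M(K))},
\end{align*}
where the first equality is locality, the second is the Proposition applied at each $F$, and the third is the block description of $K$. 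Conversely every multiplier of $K$ must have uniformly bounded local norms, hence by Lemma~\ref{normform} lies in $\tilde{\cl A}$ with the same norm, giving surjectivity of the identification and BPW completeness consistency.

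The main obstacle I anticipate is the bookkeeping in patching the local kernels into one global positive definite $K$ while keeping the ambient Hilbert space $\ff C$ a genuine (fixed) Hilbert space and keeping evaluation at each point well defined and nonzero. In particular one must ensure that the direct sum $\bigoplus_F \ff C_F$ converges as an operator kernel and that the resulting $K(x,x) \neq 0$ so that the hypotheses of the $\cl M(K)$ theorem apply, and that $\cl M(K)$ separates points (which follows since $\tilde{\cl A}$ already separates points and the map is an isometry). The genuinely delicate verification is that the supremum over finite $F$ of the local multiplier norms coincides with the global multiplier norm of $K$ — this rests on the fact, established in the proof of the $\cl M(K)$ theorem, that the multiplier norm is itself computed as a supremum of finite-point positivity conditions, so that no new constraints are introduced by passing to the infinite direct sum beyond those already present at each finite $F$.
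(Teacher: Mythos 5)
Your proposal is correct and follows essentially the same route as the paper: represent each finite quotient as a multiplier algebra via the $k$-idempotent proposition, extend each local kernel by zero, take the direct sum over all finite subsets, and identify the global multiplier norm with the supremum of the finite-block conditions, which Lemma~\ref{normform} and locality convert into the $\tilde{\cl A}$ norm. Your added remark about normalizing the blocks $K_F$ so the direct sum defines a bounded operator-valued kernel addresses a detail the paper leaves implicit, and the rescaling fix is the right one since it does not change any multiplier norm.
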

  \begin{proof}
  Let Y be a finite subset of X. Since $\cl A/I_Y$ is a $|Y|$-idempotent 
  operator algebra, by the above lemma, there exists a vector valued kernel 
  $K_Y$ such that $\cl A/I_Y= \cl M(K_Y)$ completely isometrically.\\ 
  Define \[\widetilde{K_Y}(x,y)= \left\{\begin{array}{c l}
  K_Y(x,y)& \text{ when } (x,y) \in Y \times Y ,\\
  0& \text{ when }(x,y)\not \in Y \times Y.
  \end{array}
  \right. \]
  and set $K=\sum_Y \oplus \widetilde{K_Y},$ where the direct sum is over
  all finite subsets of X.\\
  Then it is easily checked that K is positive definite.\\
  \noindent  Let $f \in M_n(\cl M(K))$ with $\|M_f\| \leq 1.$ This is equivalent
  to  \begin{multline*} ((I-f(x_i)f(x_j)^*)\otimes K(x_i,x_j)) \geq 0 \\
  \Longleftrightarrow ((I-f(x_i)f(x_j)^*)\otimes K_{F}(x_i,x_j)) \geq 0 \ \forall \ F\subseteq X. \end{multline*}
  This last condition is equivalent to the existence for each $F$ of some
  $f_F \in M_n(\cl A)$ such that $\|\pi_F(f_F)\| \le 1$ and $f_F =f$ on $F.$ 
  The net of functions $\{ f_F \}$ then converges BPW to $f.$ Hence, 
  $f \in \tilde{\cl A}$ with $\|f\|_L \le 1.$
 
  This proves that $M_n(\cl M(K))= M_n(\tilde{\cl A})$ isometrically.
  \end{proof}
  \begin{cor}
  Every BPW complete local operator algebra of functions is a dual operator algebra.
  \end{cor}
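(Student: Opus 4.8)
The plan is to chain together two results already established in the excerpt.  The final statement to prove is the Corollary asserting that every BPW complete local operator algebra of functions is a dual operator algebra.  The key observation is that Theorem~\ref{tilde-multiplier} produces, for \emph{any} operator algebra of functions $\cl A$ on $X$, a Hilbert space $\ff C$ and a positive definite kernel $K:X\times X\to B(\ff C)$ with $\cl M(K)=\tilde{\cl A}$ completely isometrically.  Meanwhile Lemma~\ref{multdual} shows that any multiplier algebra $\cl M(K)\subseteq B(\cl L)$ is a weak*-closed subalgebra, and the remarks immediately following that lemma record the standard fact from \cite{BLM} that a weak*-closed subalgebra of $B(\cl L)$, having a predual as an operator space, is precisely a dual operator algebra.

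So the argument I would write is short.  First I would take an arbitrary BPW complete local operator algebra of functions $\cl A$ on a set $X$.  Applying Theorem~\ref{tilde-multiplier} to $\cl A$ yields $K$ with $\tilde{\cl A}\cong\cl M(K)$ completely isometrically.  Next I would invoke BPW completeness: since $\cl A$ is BPW complete we have $\cl A=\tilde{\cl A}$ as sets, and since $\cl A$ is local the Corollary preceding Lemma~\ref{local} (``If $\cl A$ is a BPW complete operator algebra then $\cl A_L=\tilde{\cl A}$ completely isometrically'') together with locality forces the norm on $\cl A$ to coincide with the norm on $\tilde{\cl A}$.  Hence $\cl A=\tilde{\cl A}=\cl M(K)$ completely isometrically.

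Finally I would apply Lemma~\ref{multdual} to conclude that $\cl M(K)$ is a weak*-closed subalgebra of $B(\cl L(K,\ff C))$, and therefore, by the cited characterization from \cite{BLM}, is a dual operator algebra.  Transporting this structure back across the complete isometry $\cl A\cong\cl M(K)$ shows $\cl A$ is a dual operator algebra, completing the proof.

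The main subtlety—really the only point requiring care rather than mechanical quotation—is justifying that under the combined hypotheses of local and BPW complete the canonical complete isometry of Theorem~\ref{tilde-multiplier} identifies $\cl A$ itself (with its given norm), not merely its renorming $\cl A_L$ or its completion $\tilde{\cl A}$, with $\cl M(K)$.  For a local algebra the defining equation $\sup_F\|\pi_F^{(n)}((f_{ij}))\|=\|(f_{ij})\|$ means the given norm already equals the $\cl A_L$ norm, so $\cl A=\cl A_L$ completely isometrically; BPW completeness then upgrades $\cl A_L=\tilde{\cl A}$ via the earlier corollary.  Once that identification of norms is pinned down, the dual-operator-algebra conclusion is immediate from Lemma~\ref{multdual} and the standard predual duality theory, so I do not expect any genuine obstacle beyond bookkeeping the three norms.
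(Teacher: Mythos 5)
Your proposal is correct and follows the same route as the paper: the paper's proof is exactly the chain $\cl A = \tilde{\cl A} = \cl M(K)$ (completely isometrically, via Theorem~\ref{tilde-multiplier} together with locality and BPW completeness) followed by an appeal to Lemma~\ref{multdual}. Your extra care in tracking the three norms ($\cl A$, $\cl A_L$, $\tilde{\cl A}$) only makes explicit what the paper leaves implicit.
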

  \begin{proof} In this case we have that $\cl A = \tilde{\cl A} = \cl M(K)$ 
  completely isometrically.  By Lemma~\ref{multdual}, this latter algebra is 
  a dual operator algebra.
  \end{proof}

  \noindent The above theorem gives a weak*-topology to a local 
  operator algebra of function, $\cl A$ by using the identification 
  $\cl A \subseteq \tilde{\cl A} = \cl M(K)$ and taking the weak*-topology 
  of $\cl M(K).$ 
The following proposition 
  proves that convergence of bounded nets in this weak*-topology on 
  $\cl A$  is same as BPW convergence.

  \begin{prop}
  Let $\cl A$ be a local operator algebra of functions on the set X. 
  Then the net $(f_{\lambda})_{\lambda} \in Ball(\cl A)$ converges 
  in the weak*-topology if and only if it converges pointwise on X.
  \end{prop}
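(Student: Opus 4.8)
The plan is to work through the identification furnished by Theorem~\ref{tilde-multiplier}: we may regard $\tilde{\cl A} = \cl M(K) \subseteq B(\cl L)$, where $\cl L = \cl L(K,\ff C)$ is the vector-valued reproducing kernel Hilbert space, and the weak*-topology on $\cl A$ is by definition the one it inherits as a subset of $\cl M(K)$ with the weak*-topology of $B(\cl L)$. Since $Ball(\cl A)$ is norm-bounded, the first reduction is to recall that on bounded subsets of $B(\cl L)$ the weak*-topology agrees with the weak operator topology; hence it suffices to prove the equivalence of WOT convergence with pointwise convergence. I would set up two standard facts about $\cl L$ to be used repeatedly: the reproducing property $\langle F, E_z^* d\rangle_{\cl L} = \langle F(z), d\rangle_{\ff C}$, and the fact that the kernel functions $\{E_y^* c : y \in X,\ c \in \ff C\}$, which satisfy $(E_y^* c)(x) = K(x,y)c$, span a dense subspace of $\cl L$.

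For the implication pointwise $\Rightarrow$ weak*, suppose $f_\lambda \to f$ pointwise with $\|f_\lambda\| \le 1$. Because $\cl A$ is local, the BPW-completion theory places $f \in \tilde{\cl A} = \cl M(K)$, so that $M_f$ is defined. As $\{M_{f_\lambda}\}$ is bounded, to verify WOT convergence it is enough to test matrix coefficients against the total family of kernel functions, where a direct computation gives
\[ \langle M_{f_\lambda}(E_y^* c), E_z^* d\rangle_{\cl L} = \langle f_\lambda(z) K(z,y)c, d\rangle_{\ff C} = f_\lambda(z)\,\langle K(z,y)c, d\rangle_{\ff C}. \]
Since $f_\lambda(z) \to f(z)$, this converges to $\langle M_f(E_y^* c), E_z^* d\rangle_{\cl L}$. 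A routine uniform-boundedness (three-epsilon) argument then upgrades convergence on the total set to WOT convergence on all of $\cl L$, so $f_\lambda \to M_f$ weak*.

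For the converse, suppose $M_{f_\lambda} \to T$ weak* (equivalently WOT on this bounded net). By Lemma~\ref{multdual} the algebra $\cl M(K)$ is weak*-closed, so $T = M_f$ for a multiplier $f$. Fix $x \in X$. I would first observe that $K(x,x) \ne 0$: taking $Y = \{x\}$ in the construction of $K$, the corresponding one-idempotent block contributes $\widetilde{K_{\{x\}}}(x,x) = E_1 E_1^* = I \ne 0$. Thus $E_x \ne 0$, and there is $g \in \cl L$ with $g(x) \ne 0$. Using the reproducing property,
\[ \langle M_{f_\lambda} g, E_x^* g(x)\rangle_{\cl L} = \langle (M_{f_\lambda} g)(x), g(x)\rangle_{\ff C} = f_\lambda(x)\,\|g(x)\|^2, \]
and WOT convergence forces the left side to tend to $f(x)\,\|g(x)\|^2$, so $f_\lambda(x) \to f(x)$. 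As $x$ was arbitrary, the net converges pointwise.

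The genuinely delicate points are the two standard functional-analytic reductions rather than any computation: that the weak*-topology of $B(\cl L)$ restricts to the weak operator topology on the norm-bounded set $Ball(\cl A)$, and that the kernel functions form a total subset so that testing coefficients on them determines WOT limits of bounded nets. The rest follows by the reproducing property together with the nonvanishing $K(x,x) \ne 0$. The one structural subtlety to flag is that the weak* limit is a priori only known to exist in $B(\cl L)$; I use the weak*-closedness of $\cl M(K)$ from Lemma~\ref{multdual} to return it to the multiplier algebra before identifying it as $M_f$.
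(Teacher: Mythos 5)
Your proposal is correct and follows essentially the same route as the paper: identify $\cl A$ inside $\tilde{\cl A}=\cl M(K)$, reduce weak*-convergence of the bounded net to WOT convergence tested on the total set of kernel functions $E_y^*c$, compute the matrix coefficients via the reproducing property, and use the nonvanishing $K(x,x)\ne 0$ (coming from the singleton block $\cl A/I_{\{x\}}$ in the construction of $K$) to recover pointwise convergence. Your explicit appeal to Lemma~\ref{multdual} to land the weak* limit back in $\cl M(K)$ is a small tidiness improvement over the paper's write-up, but not a different argument.
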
 
  \begin{proof} Let $\cl H$ denote the reproducing kernel Hilbert
    space of $\ff C$-valued functions on X with kernel $K$ for which $\tilde{\cl A} =
    \cl M(K).$ Recall that if $E_x: \cl H \to \ff C,$ is the linear
    map given by evaluation at $x,$ then $K(x,y) = E_x E_y^*.$
Also, if $v \in \ff C,$ and $h \in \cl H,$ then $\langle h, E_x^*v
\rangle_{\cl H} = \langle h(x), v \rangle_{\ff C}.$

  First assume that the net $(f_{\lambda})_{\lambda} \in Ball(\cl A)$ 
  converges to f in the weak*-topology. Using the identification of
  $\tilde{\cl A} = \cl M(K),$ we have that the operators
  $M_{f_{\lambda}}$ of
  multiplication by $f_{\lambda},$ converge in the weak*-topology of
  $B(\ff C)$ to $M_f.$  Then for any $x \in X, h \in \cl H, v \in \ff
  C,$ we have that $f_{\lambda}(x)\langle h(x),v \rangle_{\ff C} =
  \langle f(x)h(x), v \rangle_{\ff C} = \langle M_{f_{\lambda}}h,
  E_x^*v \rangle_{\cl H} \to \langle M_fh, E_x^*v \rangle_{\cl H} =
  f(x) \langle h(x),v \rangle_{\ff C}.$ Thus, if there is a vector in
  $\ff C$ and a vector in $\cl H$ such that $\langle h(x),v
  \rangle_{\ff C} \ne 0,$ then we have that $f_{\lambda}(x) \to f(x).$
  It is readily seen that such vectors exist if and only if $E_x \ne
  0,$ or equivalently, $K(x,x) \ne 0.$  But this follows from the
  construction of K as a direct sum of positive definite functions
  over all finite subsets of X. For fixed $x \in X$ and the one
  element subset 
  $Y_0=\{x\}$, we have that the 1-idempotent algebra $\cl A/I_{Y_0}
  \ne 0$ and so $K_{Y_0}(x,x) \ne 0,$ which is one term in the direct
  sum for $K(x,x).$

  
\indent Conversely, assume that $ \|f_{\lambda}\| < K, \forall \lambda $ and $ f_{\lambda} 
  \rightarrow f$ pointwise on X. We must prove that $M_{f_{\lambda}}
  \to M_f$ in the weak*-topology on $B(\cl H).$ But since this is a
  bounded net of operators, it will be enough to show convergence in the
  weak operator topology and arbitrary vectors can be replaced by
  vectors from a spanning set. Thus, it will be enough to show that
  for $v_1,v_2 \in \ff C$ and $x_1,x_2 \in X,$ we have that
$\langle M_{f_{\lambda}} E_{x_1}^*v_1, E_{x_2}^*v_2 \rangle_{\cl H} \to \langle M_f
E_{x_1}^*v_1,E_{x_2}^*v_2 \rangle_{\cl H}.$ But we have, 
\begin{multline*} \langle M_{f_{\lambda}} E_{x_1}^*v_1, E_{x_2}^*v_2 \rangle_{\cl H}=
\langle E_{x_2}(M_{f_{\lambda}}E_{x_1}^*v_1),v_2 \rangle_{\ff C}=\\
f_{\lambda}(x_2)\langle K(x_2,x_1)v_1,v_2 \rangle_{\ff C} \to f(x_2)
\langle K(x_2,x_1)v_1,v_2 \rangle_{\ff C} = \langle M_f
E_{x_1}^*v_1,E_{x_2}^8v_2 \rangle_{\cl H}, \end{multline*} 
and the result follows.
  \end{proof}

  \begin{cor}
  The ball of a local operator algebra of functions is weak*-dense in the ball of its BPW completion.
  \end{cor}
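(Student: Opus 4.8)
The plan is to combine the BPW density established in Lemma~\ref{local} with the equivalence of weak*-convergence and pointwise convergence proved in the preceding proposition. The guiding observation is that since $\cl A$ is local, the defining condition $\sup_F \|\pi_F^{(n)}((f_{i,j}))\| = \|(f_{i,j})\|$ is exactly the assertion that $\cl A = \cl A_L$ completely isometrically; in particular $Ball(\cl A) = Ball(\cl A_L)$, and both algebras share the same BPW completion $\tilde{\cl A} = \cl M(K)$ from Theorem~\ref{tilde-multiplier}, whose weak*-topology (coming from Lemma~\ref{multdual}) is the topology in which density is to be proved.

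First I would fix an arbitrary $f \in Ball(\tilde{\cl A})$. By Lemma~\ref{local}, $Ball(\cl A_L) = Ball(\cl A)$ is BPW dense in $Ball(\tilde{\cl A})$, so there is a net $(f_{\lambda})_{\lambda} \subseteq Ball(\cl A)$ with $f_{\lambda} \to f$ pointwise on $X$; this net is automatically uniformly bounded since it lies in the unit ball. Next I would invoke the converse direction of the preceding proposition: a bounded net in $Ball(\cl A)$ converging pointwise to $f$ has the property that the multiplication operators $M_{f_{\lambda}}$ converge to $M_f$ in the weak*-topology of $B(\cl H)$, where $\cl H$ is the reproducing kernel Hilbert space with $\tilde{\cl A} = \cl M(K)$. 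Under the identification $\cl A \subseteq \tilde{\cl A} = \cl M(K)$, this says precisely that $f_{\lambda} \to f$ weak*. Hence every element of $Ball(\tilde{\cl A})$ lies in the weak*-closure of $Ball(\cl A)$, giving the desired density.

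Since the corollary is essentially a packaging of the two preceding results, I expect no serious obstacle; the work has already been done. The only points requiring care are, first, to confirm that the weak*-topology appearing in the preceding proposition (the one inherited from the dual operator algebra structure of $\cl M(K)$) is the same topology in which the BPW density of Lemma~\ref{local} is being upgraded, which holds because both statements use the identification $\tilde{\cl A} = \cl M(K)$ of Theorem~\ref{tilde-multiplier}; and second, to note that the pointwise limit $f$ genuinely defines a multiplier $M_f$, which is automatic because $f \in \tilde{\cl A} = \cl M(K)$ by hypothesis, so that the conclusion $M_{f_{\lambda}} \to M_f$ is meaningful.
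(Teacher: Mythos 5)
Your argument is correct and is exactly the one the paper intends: the corollary is stated without proof as an immediate consequence of Lemma~\ref{local} (BPW density of $Ball(\cl A_L)$ in $Ball(\tilde{\cl A})$, with $\cl A = \cl A_L$ by locality) combined with the preceding proposition identifying weak*-convergence of bounded nets with pointwise convergence. Your added remarks --- that the two results refer to the same weak*-topology via $\tilde{\cl A} = \cl M(K)$, and that the pointwise limit is already a multiplier by BPW completeness --- are the right points to check and are handled correctly.
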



  \section{Residually Finite Dimensional Operator Algebras}

   A C*-algebra $B$ is called {\bf residually finite-dimensional
  (RFD)} if it has a separating family of finite-dimensional
  representations. Note that when $B$ is separable and RFD
  it has a seperating {\em sequence} of finite dimensional
  representations.\\ Since every one-to-one $*$-homomorphism
  of a C*-algebra is completely isometric, a C*-algebra $B$
  is RFD if and only if for all $n,$ and for every $(b_{i,j})
  \in M_n(B),$ we have that $\|(b_{i,j})\|_{M_n(B)}=
  \sup  \{\|(\pi(b_{i,j}))\|\}$ where the suprema is taken over all
  $*$-homomorphisms, $\pi:B \rightarrow \bb M_k \ \forall \  k.$ 
  RFD C*-algebras have been studied in \cite{BB},\cite{DM},\cite{RJ},\cite {GM}.\\

  We know from BRS that any operator algebra can
  be represented on a Hilbert space, i.e., given an
  abstract unital operator algebra $\cl A$ there
  exists an unital completely isometric homomorphism
  $\pi:\cl A \rightarrow \bb B(\cl H).$ Since a representation of a
  C*-algebra is a *-homomorphism if and only if it is completely
  contractive, the following
  definition gives us a natural way of extending the
  notion of RFD to operator algebras.

  \begin{defn}
  An operator algebra, $\cl B$ is called {\bf RFD} if for every $n$ and
  for every
  $(b_{i,j}) \in M_n(\cl B),$ $ \|(b_{i,j})\|=\sup \{\|(\pi(b_{i,j}))\|\},$ 
  where the suprema is taken over all completely contractive homomorphisms, 
  $\pi:\cl B \rightarrow M_k \ \forall \ k.$
  A dual operator algebra $\cl B$ is called {\bf weak*-RFD} if this last
  equality holds when the completely contractive homomorphisms are
  also required to be weak*-continuous. 
  \end{defn}

  The following result is implicitly contained in \cite{Pa2}, but the
  precise statement that we shall need does not appear there.
  Thus, we refer the reader to \cite{Pa2} to be able to fully understand
  the proof since we have used some of the definitions
  and results from \cite{Pa2} without stating them.
 
  \begin{lemma}
  Let $\cl B$ be a concrete k-idempotent operator algebra. Then
  $\ff S(\cl B^*\cl B))$ is a Schur ideal affiliated with $\cl B,$
  i.e., $\cl B=\ff A(\ff S(\cl B^*\cl B))$ completely isometrically.
  \end{lemma}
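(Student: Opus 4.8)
The plan is to unwind the two constructions $\ff S(\,\cdot\,)$ and $\ff A(\,\cdot\,)$ of \cite{Pa2} and to check that applying them in succession returns the original matrix-norm structure on $\cl B$. Write $\cl B = \mathrm{span}\{E_1,\dots,E_k\} \subseteq B(\cl H)$ with $E_iE_j = \delta_{i,j}E_i$ and $\sum_i E_i = I$, and let $\cl B^*\cl B = \mathrm{span}\{E_i^*E_j : 1 \le i,j \le k\}$ be the associated concrete operator system inside $B(\cl H)$. Recall from \cite{Pa2} that $\ff S(\cl B^*\cl B)$ is the family of positive matrices extracted from the matrix-ordered (operator-system) positivity data of $\cl B^*\cl B$, that $\ff A$ produces an operator algebra on $k$ points whose matrix norms are defined by demanding positivity tested against every member of the given Schur ideal, and that ``affiliated'' means exactly that this reconstruction returns $\cl B$. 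Since $\ff A(\ff S(\cl B^*\cl B))$ is, as an algebra, again the $k$-dimensional algebra of functions on $k$ points spanned by the idempotents, there is no set-theoretic or closure gap to worry about: the identity map on the generators is automatically an algebra isomorphism, and the entire content is that it preserves every matrix norm.

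The heart of the argument is the norm computation, which I would carry out as the dual form of the computation in the Proposition preceding Theorem~\ref{tilde-multiplier}. For $B = \sum_i A_i \otimes E_i \in M_n(\cl B)$ with $A_i \in M_n$, the condition $\|B\|_{M_n(\cl B)} \le C$ is equivalent to $C^2 I - B^*B \ge 0$, that is, to
\[
C^2\,(I_n \otimes I_{\cl H}) - \sum_{i,j} A_i^* A_j \otimes E_i^* E_j \ge 0 .
\]
This reduces the full matrix-norm structure of $\cl B$ to the positivity of a single element built from the generators $E_i^*E_j$ of $\cl B^*\cl B$, tested against vectors of $\cl H \otimes \bb C^n$. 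It is precisely this positivity datum that $\ff S(\cl B^*\cl B)$ is designed to record; where the earlier Proposition obtained $\langle Pv,v\rangle = C^2\|h\|^2 - \|B^*h\|^2$ using the kernel $K(x_i,x_j)=E_iE_j^*$, the present display is its $\cl B^*\cl B$-analogue, and it shows that the matrix norms of $\cl B$ are completely determined by the operator system $\cl B^*\cl B$.

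To finish I would verify that $\ff S(\cl B^*\cl B)$ satisfies the axioms of a Schur ideal from \cite{Pa2} --- closure under direct sums and under Schur multiplication by positive $k\times k$ matrices --- which follows from the fact that the defining matrices come from a genuine operator system and that the products $E_i^*E_j$ behave correctly under compressions and ampliations. Matching the displayed positivity condition term by term against the definition of the $\ff A$-norm then yields $\|B\|_{M_n(\ff A(\ff S(\cl B^*\cl B)))} = \|B\|_{M_n(\cl B)}$ for \emph{every} $n$, i.e.\ the identity is a complete isometry. I expect the main obstacle to be not any single estimate but the bookkeeping needed to reconcile the indexing and normalization conventions of \cite{Pa2}, and in particular to confirm two points: first, that the equivalence above holds simultaneously at all matrix levels $n$ (so that one obtains a complete, not merely an isometric, isomorphism); and second, that passing to the Schur-ideal generated by $\cl B^*\cl B$ and then reconstructing via $\ff A$ does not relax the positivity constraints --- i.e.\ that the closure operations defining $\ff S$ introduce no constraints beyond those already witnessed by $\cl B^*\cl B$ itself, so that $\ff A(\ff S(\cl B^*\cl B))$ is no larger in norm than $\cl B$.
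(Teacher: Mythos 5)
Your reduction of the problem is the right one, and it is the same reduction the paper uses in its last step: since $\|B\|_{M_n(\cl B)}\le C$ if and only if $C^2 I - B^*B = C^2 I - \sum_{i,j}A_i^*A_j\otimes E_i^*E_j \ge 0$, the entire matrix-norm structure of $\cl B$ is encoded in the matrix order on the operator system $\cl B^*\cl B$ (note $I=\sum_{i,j}E_i^*E_j$ does lie in $\cl B^*\cl B$, so the test element lives in $M_n(\cl B^*\cl B)$). Consequently any complete order isomorphism carrying $E_i^*E_j$ to $F_i^*F_j$ restricts to a complete isometry carrying $E_i$ to $F_i$; that is exactly how the paper closes.

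The gap is in what you dismiss as bookkeeping. The statement that ``matching the displayed positivity condition term by term against the definition of the $\ff A$-norm yields $\|B\|_{M_n(\ff A(\ff S(\cl B^*\cl B)))}=\|B\|_{M_n(\cl B)}$'' conceals the substantive half of the lemma. The $\ff A$-norm is a supremum over the representations $E_i\mapsto Q^{1/2}(I_n\otimes E_{ii})Q^{-1/2}$ indexed by \emph{invertible} elements $Q\in\ff S_n^{-1}(\cl B^*\cl B)$. One direction (each such $Q$ gives a completely contractive representation, so the $\ff A$-norm is dominated by the original norm) is indeed a direct check from membership in the Schur ideal. The other direction --- that there are \emph{enough} invertible $Q$'s in $\ff S(\cl B^*\cl B)$ for the supremum to attain $\|B\|_{M_n(\cl B)}$ from below --- is not a term-by-term match: it requires knowing that the Schur ideal is non-trivial and bounded (Corollary~3.3 of \cite{Pa2}) and the duality identity $\ff S\bigl(\ff A(\ff S)\ff A(\ff S)^*\bigr)=\ff S$ (Theorem~3.2 of \cite{Pa2}), which is proved there by a separation argument, not by unwinding definitions. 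The paper's proof consists precisely of invoking these two results to obtain the complete order isomorphism $\ff A(\ff S(\cl B^*\cl B))\ff A(\ff S(\cl B^*\cl B))^*\cong\cl B^*\cl B$ and then restricting; your second ``point to confirm'' is exactly this step, and as written your proposal neither proves it nor cites it. To repair the argument, either quote Theorem~3.2 and Corollary~3.3 of \cite{Pa2} as the paper does, or supply the witness construction: given $\|B\|_{M_n(\cl B)}>C$, produce from a norming vector $h\in\cl H^n$ a positive matrix $(Q_{ij})$ built from the vectors $E_i^*h$, perturb it to an invertible element of the Schur ideal, and check it violates the $\ff A$-norm bound $C$.
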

  \begin{proof} From the Corollary~3.3 of \cite{Pa2} we have that
   the Schur ideal $\ff S(\cl B^*\cl B)$ is non-trivial and bounded.
   Thus, we can define the algebra $\ff A(\ff S(\cl B^*\cl B))=span\{E_1,\cdots, E_k\},$ 
   where $E_i=\sum_n \sum_{ Q \in\ff S_n^{-1}(\bb B^*\cl B)}\oplus Q^{1/2}(I_n \otimes E_{ii})Q^{-1/2}$
   is the idempotent operator that live on $\sum_n\sum_{Q \in \ff S_n^{-1}}\oplus M_k(M_n).$
   By using Theorem 3.2 of \cite{Pa2} we get that 
   $\ff S(\ff A(\ff S(\cl B^*\cl B))\ff A(\ff S(\cl B^*\cl B))^*)=\ff S(\cl B^*\cl B)$
   This further imples that  $\ff A(\ff S(\cl B^*\cl B))\ff A(\ff S(\cl B^*\cl B))^*= \cl B^*\cl B$
   completely order isomorphically under the map which sends 
   $E_i^*E_j$ to $F_i^*F_j.$ Finally, by restricting the same map
   to A we get a map which sends $E_i$ to $F_i$ completely isometrically.
   Hence, the result follows.
  \end{proof}


  \begin{thm} Every k-idempotent operator algebra is weak*-RFD.
  \end{thm}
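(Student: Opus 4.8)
The plan is to reduce the claim about a $k$-idempotent operator algebra $\cl B$ to a concrete, finite-dimensional computation by exhibiting enough finite-dimensional weak*-continuous completely contractive representations to recover the norm on each $M_n(\cl B)$. The guiding idea comes from the preceding Lemma: we have $\cl B = \ff A(\ff S(\cl B^*\cl B))$ completely isometrically, and the concrete realization of $\ff A(\ff S(\cl B^*\cl B))$ expresses each idempotent $E_i$ as a direct sum of operators of the form $Q^{1/2}(I_n \otimes E_{ii})Q^{-1/2}$ acting on $M_k(M_n)$, the direct sum running over $n$ and over $Q$ in the inverse Schur ideal $\ff S_n^{-1}(\cl B^*\cl B)$. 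Because $\cl B$ is finite-dimensional (spanned by $k$ idempotents) it is automatically a dual operator algebra, so ``RFD'' and ``weak*-RFD'' will coincide once we check the representations can be taken weak*-continuous; and since the summands act on finite-dimensional spaces, each summand is itself a finite-dimensional representation.

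First I would spell out the concrete representations. For each fixed $n$ and each $Q \in \ff S_n^{-1}(\cl B^*\cl B)$, the map $\rho_{n,Q}: \cl B \to M_k(M_n) = M_{kn}$ sending $E_i$ to $Q^{1/2}(I_n \otimes E_{ii})Q^{-1/2}$ is a unital homomorphism of $\cl B$ onto a $k$-idempotent algebra on a finite-dimensional space, hence a finite-dimensional representation. Because $\ff A(\ff S(\cl B^*\cl B))$ realizes $\cl B$ via the \emph{direct sum} of exactly these maps, the completely isometric identity $\cl B = \ff A(\ff S(\cl B^*\cl B))$ says precisely that for every $(b_{i,j}) \in M_m(\cl B)$,
\[
\|(b_{i,j})\|_{M_m(\cl B)} = \sup_{n,Q} \|(\rho_{n,Q}(b_{i,j}))\|_{M_m(M_{kn})}.
\]
The remaining point is that each $\rho_{n,Q}$ is completely contractive: this is immediate from the displayed equality, since the left side dominates every summand on the right. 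Thus the supremum in the definition of RFD over all completely contractive homomorphisms into matrix algebras is at least the supremum over the particular family $\{\rho_{n,Q}\}$, which already equals the norm; the reverse inequality is automatic because every completely contractive representation can only decrease norms. This yields the RFD equality.

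To upgrade to weak*-RFD, I would observe that each $\rho_{n,Q}$ is a linear map between finite-dimensional spaces and $\cl B$ is finite-dimensional, so $\rho_{n,Q}$ is trivially weak*-continuous (every linear map on a finite-dimensional operator algebra is weak*-continuous for the unique dual structure). Hence the very same family $\{\rho_{n,Q}\}$ witnesses weak*-RFD, and the two norms again agree.

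\textbf{The main obstacle} I anticipate is bookkeeping rather than conceptual: one must verify cleanly that the direct-sum decomposition furnished by Theorem~3.2 and Corollary~3.3 of \cite{Pa2} really does present the norm on $M_m(\cl B)$ as the supremum over the individual finite-dimensional summands $\rho_{n,Q}$, and that each summand is genuinely a homomorphism (i.e., that $Q^{1/2}(I_n \otimes E_{ii})Q^{-1/2}$ are commuting idempotents summing to the identity). This requires unwinding the definitions of $\ff S$, $\ff S^{-1}$, and $\ff A$ from \cite{Pa2}; once those identifications are in hand the norm equality and complete contractivity of each $\rho_{n,Q}$ fall out immediately, and the finite dimensionality of $\cl B$ disposes of the weak*-continuity with no further work.
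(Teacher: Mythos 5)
Your proposal is correct and follows essentially the same route as the paper: both invoke the lemma identifying $\cl B$ with $\ff A(\ff S(\cl B^*\cl B))$, read off the finite-dimensional representations $\pi_n^Q(E_i) = Q^{1/2}(I_n \otimes E_{ii})Q^{-1/2}$ from the direct-sum realization so that the supremum over them recovers the matrix norms, and dispose of weak*-continuity via finite dimensionality. The only cosmetic difference is that the paper first fixes a weak*-continuous completely isometric representation of the abstract algebra before applying the lemma, a step your finite-dimensionality remarks absorb.
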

  \begin{proof} Let $\cl A$ be an abstract k-idempotent operator algebra.
  Note that $\cl A$ is a dual operator algebra being a finite
  dimensional operator algebra. From this it follows that there exists
  a Hilbert space,\ $\cl H$ and a weak*-continuous completely
  isometric homomorphism, $ \pi:\cl A \rightarrow B(\cl H).$
  Note that $\cl B = \pi(\cl A)$ is a concrete k-idempotent algebra
  generated by the idempotents,  $\cl B=span\{F_1, F_2,..., F_k\}$
  contained in $B(\cl H).$  Thus, from the above lemma 
  $\cl B= \ff A(\ff S(\cl B^* \cl B))$ completely isometrically.

  \noindent For each $n \in \bb N, \ Q \in \ff S_n^{-1}$, we define
   $\pi_n^Q: \cl B \rightarrow M_k(M_n)$ via 
   $$\pi_n^Q(F_i)= Q^{1/2}(I_n\otimes E_{ii})Q^{-1/2}$$
  Assume for the moment that we have proven that $\pi_n^Q$ is a weak*-
  continuous completely contractive homomorphism. Then for every
  $(b_{ij}) \in M_k(\cl B)$ we must have that
  $\ \sup_{n, Q \in \ff S_n^{-1}}\|(\pi_n^Q(b_{ij}))\|=\|(b_{ij})\| ,$
  and hence $\|(b_{ij})\|=\sup_m\{\|(\rho(b_{ij}))\|\}$ where supremum is taken
  over all weak*-continuous cc homomorphism $ \rho:\cl B
  \rightarrow M_m \ \forall \ m.$ \\
  \indent Finally, note that for any $(a_{ij}) \in M_k(\cl A)$
  we have that $\|(a_{ij})\|_{M_k(\cl A)} =  \sup_m\{\|(\rho(\pi(a_{ij})))\|\}$
  where supremum is taken over all weak*-continuous cc homomorphisms $\rho:{\cl B}^*\cl B \rightarrow M_m$
  which follows from the fact that $\pi$ is a complete isometry.
  Since $\rho \circ \pi$ is a weak*-continuous cc homomorphism on $\cl A$
  for every weak*-continuous cc homomorphism, $\rho$ on $\cl B^*\cl B.$
  This implies that $ \|(a_{ij})\|_{M_k(\cl A)}= \sup\{\|(\gamma(a_{ij}))\|\}$
  where supremum is taken over all finite dimensional weak*-continuous
  cc homomorphisms, and hence the result.\\
  \indent Thus, it remains to show that $\pi_n^Q$ is a weak*-continuous
  cc homomorphism. Note that it is easy to check
  that it is a completely contractive homomorphism and
  $\sup_{n, Q\in \ff S_n^{-1}}\|(\pi_n^Q(b_{ij}))\|=\|(b_{ij})\|
  \  \ \forall \ (b_{ij}) \in M_k(\cl B^*\cl B).$ \\
  \indent Finally, to see that it is weak*-continuous, let
  $f_{\lambda}$ be a bounded net in $\cl B^*\cl B$ which converges
  in the weak*-topology. Note that we can write $f_{\lambda}=
  \sum_{i=1}^{k}\alpha_{1}^{k}E_i$ and $f=\sum_{i=1}^{k}\alpha_{1}E_i.$ \\
  We define for each i,  $ \psi_i: \cl B^*\cl B \rightarrow \bb C$ via 
  $\psi_i(g)= \alpha_{i}^{g} $ where
  $g=\sum_{i=1}^{k}\alpha_{i}^{g}E_i.$\\ Note that for each i, 
  $\psi_i$ is a bounded linear functional on $\cl B^*\cl B,$
  since the linear maps on a finite dimensional normed
  space are bounded. Thus, for given $\epsilon > 0, \ \exists
  \  \lambda_0$ such that
  $|\alpha_{i}^{\lambda} - \alpha_i| < \frac{\epsilon}{\max_{1 \leq i\leq k}\{\pi_n^Q(E_i)\}} 
  \ \forall \lambda \geq \lambda_0.$\\ 
  \indent Finally, to see that $\pi_n^Q(f_{\lambda})
  \rightarrow \pi_n^Q(f)$ in the weak*-topology, fix unit vectors
  $h,\  k \in \cl H$ and consider 
  \begin{multline*}
  |\langle \pi_n^Q(f_{\lambda})h,k \rangle - \langle \pi_n^Q(f)h,k \rangle|
  = |\sum_{i=1}^{k} (\alpha_{i}^{\lambda}-\alpha_i) \langle \pi_n^Q(E_i)h,k \rangle|\\  \leq \sum_{i=1}^{k}|\alpha_{i}^{\lambda}-\alpha_i| |\langle
  \pi_n^Q(E_i)h,k \rangle| < \epsilon \ \  \forall \lambda \geq \lambda_0.
  \end{multline*}
  This shows that the map $\pi_n^Q$ is a weak*-continuous and cc homomorphism for every $Q \in \ff S_n^{-1}$ and n.
  \end{proof}

  \begin{thm} Every BPW complete local operator algebra of functions is
   waek*-RFD.
  \end{thm}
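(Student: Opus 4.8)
The plan is to combine the two principal results already established in the excerpt. First, by Theorem~\ref{tilde-multiplier}, a BPW complete local operator algebra of functions $\cl A$ satisfies $\cl A = \tilde{\cl A} = \cl M(K)$ completely isometrically, where $K = \sum_Y \oplus \widetilde{K_Y}$ is the direct sum over all finite subsets $Y \subseteq X$ of the extended idempotent kernels. Second, the immediately preceding theorem shows that every $k$-idempotent operator algebra is weak*-RFD. The strategy is to transfer the finite-dimensional weak*-continuous completely contractive representations that witness the weak*-RFD property of each quotient $\cl A/I_Y$ back up to $\cl A$, using locality to recover the original norm as a supremum over finite subsets.

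First I would fix $(f_{i,j}) \in M_n(\cl A)$ and use locality directly: since $\cl A$ is local,
\[
\|(f_{i,j})\|_{M_n(\cl A)} = \sup_Y \|(\pi_Y^{(n)}(f_{i,j}))\|_{M_n(\cl A/I_Y)},
\]
the supremum being over all finite $Y \subseteq X$. Each quotient $\cl A/I_Y$ is a $|Y|$-idempotent operator algebra, and by the previous theorem it is weak*-RFD, so its norm is itself a supremum of $\|\rho(\cdot)\|$ over weak*-continuous completely contractive homomorphisms $\rho$ into matrix algebras $M_m$. Composing such a $\rho$ with the quotient map gives $\rho \circ \pi_Y : \cl A \to M_m$, which is a completely contractive homomorphism into a finite-dimensional algebra; I would then argue it is weak*-continuous for the dual operator algebra structure on $\cl A$ coming from $\cl A = \cl M(K)$.

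Combining these two supremum representations yields
\[
\|(f_{i,j})\|_{M_n(\cl A)} = \sup \{ \|(\gamma(f_{i,j}))\| : \gamma : \cl A \to M_m \text{ weak*-continuous c.c.\ homomorphism}, \ m \in \bb N \},
\]
which is exactly the assertion that $\cl A$ is weak*-RFD.

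The main obstacle I expect is the verification that $\rho \circ \pi_Y$ is genuinely \emph{weak*-continuous} as a map on $\cl A$, rather than merely completely contractive. The quotient map $\pi_Y : \cl A \to \cl A/I_Y$ must be shown weak*-continuous for the weak*-topology that $\cl A$ inherits via the identification $\cl A = \cl M(K)$; here I would exploit that $\widetilde{K_Y}$ is one of the summands of $K$, so that the compression of a multiplier to the block associated with $Y$ realizes evaluation on $Y$, and the preceding proposition identifying bounded weak*-convergence with pointwise (BPW) convergence makes $\pi_Y$ weak*-to-weak* continuous on bounded sets. Since each $\rho$ on the finite-dimensional algebra $\cl A/I_Y$ is automatically weak*-continuous, the composite is weak*-continuous on bounded sets, and by Krein--Smulian this suffices. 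The only genuinely delicate point is the interchange of the two suprema and checking that no norm is lost in passing from the abstract weak*-RFD representations of $\cl A/I_Y$ to representations of $\cl A$ itself.
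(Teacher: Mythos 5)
Your proposal is correct and follows essentially the same route as the paper: locality reduces the matrix norms to suprema over the finite quotients $\cl A/I_F$, each of which is a $|F|$-idempotent algebra and hence weak*-RFD by the preceding theorem, and the finite-dimensional representations are then pulled back through the quotient maps $\pi_F$. Your justification of the weak*-continuity of $\pi_F$ (via the equivalence of bounded weak*-convergence with pointwise convergence and Krein--Smulian) is in fact more careful than the paper's, which simply asserts that the quotient maps are weak*-continuous.
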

  \begin{proof} Let $\cl A$ be a BPW complete local operator algebra of
  functions on the set X and F be a finite subset of X,
  ${\cl A}/{I_F}$ is a $|F|$-idempotent operator algebra.
  Thus, it follows from the above lemma that ${\cl A}/{I_F}$
  is weak*-RFD, i.e., $\ \forall \ ([f_{ij}]) \in M_k({\cl A}/{I_F})$
  we have $\|([f_{ij}])\|= \sup_{n_F}\{\|(\rho_F([f_{ij}]))\|\}$
  where  supremum is taken over all weak*-continuous cc homomorphisms
  $ \rho_F:{\cl A}/{I_F} \rightarrow M_{n_F}$  and all integers $n_F.$\\
  \indent Let $(f_{ij}) \in M_k(\cl A),$ then $\|(f_{ij})\|_{M_k(\cl A)}= \sup_F \|([f_{ij}])\|$
  since $\cl A$ is local. Recall, that the weak*-topology on $\cl A$
  requires all the quotient maps of the form
  $\pi_F :\cl A \rightarrow {\cl A}/{I_F}, \ \pi_F(f)= [f]$ to be
  weak*-continuous. Thus for each finite subset $F \subseteq X$,  $\pi_F$
  is a weak*-continuous cc homomorphism.
  Let $\rho_F: {\cl A}/{I_F} \rightarrow M_{n_F} $ be a
  weak*-continuous cc homomorphism, then define
  $\delta_F= \pi_F \circ \rho_F$ which is indeed a
  weak*-continuous cc homomorphism. \\ Consider 
  \begin{multline*}
  \|(\rho_F([f_{ij}]))\|= \|(\rho_F(\pi_F(f_{ij})))\| = \|(\delta_F(f_{ij}))\| \leq \\
   \sup_n\{\|(\pi(f_{ij}))\|: \pi:\cl A \rightarrow M_n \text{ weak*-cont. cc homo}\}
  \leq  \|(f_{ij})\|_{M_k(\cl A)}. 
  \end{multline*}
  Finally, note that $\|([f_{ij}])\|_{M_k({\cl A}/{I_F})}=\sup_{n_F}\{\|(\rho_F([f_{ij}]))\|\},$ where supremum is taken over all weak*-continuous cc homomorphisms, $ \rho_F$ and positive integers $n_F.$  Hence, we obtain the
  result by taking supremum over all the finite subsets $F \subseteq X.$ 
  \end{proof}

  \begin{cor}
  Every local operator algebra of functions is RFD.
  \end{cor}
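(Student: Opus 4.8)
The plan is to derive the Corollary from the preceding theorem (every BPW complete local operator algebra of functions is weak*-RFD) by passing through the BPW completion. Let $\cl A$ be a local operator algebra of functions on $X$. The key observation is that by Theorem~\ref{tilde-multiplier} and the results of Section~2, the BPW completion $\tilde{\cl A}$ is a BPW complete local operator algebra of functions containing $\cl A$ (equivalently $\cl A_L=\cl A$, since $\cl A$ is already local) completely isometrically. Thus the previous theorem applies to $\tilde{\cl A}$, giving that $\tilde{\cl A}$ is weak*-RFD.

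First I would fix $(f_{ij})\in M_k(\cl A)$ and use the complete isometry $\cl A=\cl A_L\hookrightarrow\tilde{\cl A}$ together with the weak*-RFD property of $\tilde{\cl A}$ to write
\[
\|(f_{ij})\|_{M_k(\cl A)}=\|(f_{ij})\|_{M_k(\tilde{\cl A})}=\sup\{\|(\rho(f_{ij}))\|\},
\]
where the supremum runs over all weak*-continuous completely contractive homomorphisms $\rho:\tilde{\cl A}\to M_m$ for all $m$. The point is that each such $\rho$, being in particular a completely contractive homomorphism into a finite-dimensional algebra, witnesses one of the representations allowed in the definition of RFD. Since the RFD definition permits all completely contractive finite-dimensional homomorphisms (not merely the weak*-continuous ones), the supremum defining the RFD norm is taken over a larger family, so it dominates the weak*-RFD supremum; combined with the general inequality that every such $\rho$ is completely contractive (hence $\|(\rho(f_{ij}))\|\le\|(f_{ij})\|$), the two suprema must agree and equal $\|(f_{ij})\|_{M_k(\cl A)}$.

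The main point requiring care is the relationship between the family of homomorphisms on $\cl A$ and those on $\tilde{\cl A}$. A weak*-continuous cc homomorphism $\rho$ on $\tilde{\cl A}$ restricts to a cc homomorphism on $\cl A\subseteq\tilde{\cl A}$, so it is an admissible representation for the RFD supremum on $\cl A$; this gives $\|(f_{ij})\|_{M_k(\cl A)}\le\sup\{\|(\pi(f_{ij}))\|:\pi:\cl A\to M_m \text{ cc homo}\}$. For the reverse inequality one uses the standard fact that any cc homomorphism into $M_m$ satisfies $\|(\pi(f_{ij}))\|\le\|(f_{ij})\|_{M_k(\cl A)}$ by complete contractivity. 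I expect the only mild obstacle to be bookkeeping: ensuring that the suprema over the two (possibly different) families of finite-dimensional homomorphisms genuinely coincide and that dropping the weak*-continuity requirement enlarges rather than shrinks the relevant family. Both directions being established, the equality $\|(f_{ij})\|_{M_k(\cl A)}=\sup\{\|(\pi(f_{ij}))\|\}$ over all finite-dimensional cc homomorphisms $\pi$ follows, which is precisely the assertion that $\cl A$ is RFD.
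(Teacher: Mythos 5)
Your proposal is correct and follows essentially the same route as the paper, whose proof is the one-line observation that a local algebra $\cl A$ satisfies $\cl A=\cl A_L$ and hence embeds completely isometrically into the BPW complete local algebra $\tilde{\cl A}$, to which the weak*-RFD theorem applies. Your additional bookkeeping (restricting the weak*-continuous completely contractive homomorphisms of $\tilde{\cl A}$ to $\cl A$ and comparing the two suprema) is exactly the detail the paper leaves implicit, and it is carried out correctly.
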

  \begin{proof}
   The proof of this follows immediately since every local operator 
   algebra is contained in some BPW complete local operator algebra
   completely isometrically.
  \end{proof}

\section{Quantized Function Theory on Domains}

  Whenever one replaces scalar variables by operator variables in a
  problem or definition, then this process is often referred to as {\em
  quantization.} It is in this sense that we would like to {\em
  quantize} the function theory on a family of complex domains. In
  some sense this process has already been carried out for balls in the
  work of Drury~\cite{Dr}, Popescu~\cite{Po}, Arveson~\cite{Ar}, and
  Davidson and  Pitts~\cite{DP}  and for polydisks in the work of
  Agler~\cite{Ag1},\cite{Ag2}, and Ball and Trent~\cite{BT}.
  Furthermore, the idea of ``quantizing'' other domains defined by
  inequalities occurs in \cite{AT}, \cite{BB}, and \cite{K-V}.
  We approach these same ideas via operator algebra methods.
  We will show that in many cases this process yields local
  operator algebras of functions to which the results of the
  earlier sections can be applied.

  We begin by defining a family of open sets for which our techniques
  will apply.

  \begin{defn}\label{ratpresdefn}
  Let $G \subseteq \bb C^N$ be an open set.
  If there exists a set of matrix-valued functions,
  $F_k=(f_{k,i,j}):G^- \to M_{m_k,n_k}, \ k \in I,$ whose components
  are analytic functions on $G,$ and $G = \{ z \in \bb C^N: \|F_k(z) \| < 1, k \in I \},$  
  then we call $G$ an {\bf analytically presented domain} and we call the
  set of functions $\cl R= \{F_k:G^- \to M_{m_k,n_k}: k \in I \}$ an
  {\bf analytic
  presentation of G.}
  The subalgebra $\cl A$ of the algebra of functions on $G$ generated by the
  component  functions $\{ f_{k,i,j}:  1 \le i \le m_k, 1 \le j \le
  n_k, k \in I \}$ and the
  constant function is called the {\bf algebra of the presentation.} 
 We say that $\cl R$ is a {\bf separating analytic presentation}
 provided that the algebra $\cl A$ separates points on $G.$
\end{defn}

 \begin{remark} An analytic presentation of $G$ by a finite set of matrix-valued functions, $F_k:G \to M_{m_k,n_k}, 1 \le k \le K,$  can always be replaced by the single block diagonal matrix-valued function, $F(z) = F_1(z) \oplus \cdots \oplus F_K(z)$
  into $M_{m,n}$ with $m = m_1+ \cdots m_K, n = n_1 + \cdots n_K$ and we will sometimes do this to simplify proofs. But it is often convenient to think in terms of the set, especially since this will explain the sums that occur in the Agler's factorization formula. 
\end{remark}

  Note that when we have a analytically presented domain, then every
  function in the algebra of the presentation is an analytic
  function on $G.$

  \begin{defn}\label{admissibledefn} 
  Let $G \subseteq \bb C^N$ be an analytically presented
  domain with presentation $F=(f_{i,j}): G \to M_{m,n},$
  and let $\cl H$ be a Hilbert space. A homomorphism $\pi: \cl A \to
  B(\cl H)$ of the algebra of the presentation is called an {\bf
    admissible representation} provided that
  $\|(\pi(f_{i,j}))\| \le 1$ in $M_{m,n}(B(\cl H) = B(\cl H^{n},
  \cl H^{m}).$ 
  We call the homomorphism $\pi$ an {\bf admissible strict
  representation} when these inequalities are all strictly less than
  1.
  Given $(g_{i,j}) \in M_n(\cl A)$ we set $\|(g_{i,j})\|_u = \sup \{
  \|(\pi(g_{i,j}))\| \},$ where the supremum is taken over all
  admissible representations $\pi$ of $\cl A.$ 
  We let $\|(g_{i,j})\|_{u_0}$
  denote the supremum that is obtained when we restrict to admissible strict representations.
  \end{defn}

  The theory of \cite{AT} and \cite{BB} studies domains defined as above with the additional restrictions that the set of defining functions is a finite set of polynomials, but they do not need their polynomials to separate points.  Our results should be compared to theirs.

  \begin{prop} Let G have a separating analytical presentation and let $\cl A$ be
  the algebra of the presentation. Then $\cl A$ endowed with either of the family of norms $\| \cdot \|_u$
  or $\| \cdot \|_{u_0}$ is an operator algebra of functions on $G$.
  \end{prop}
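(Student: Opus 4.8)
The plan is to verify the four conditions in the definition of an operator algebra of functions, handling the two norms $\|\cdot\|_u$ and $\|\cdot\|_{u_0}$ in parallel, since the arguments are identical. Conditions (1) and (2) are essentially built into the hypotheses: $\cl A$ is by construction the subalgebra of the algebra of functions on $G$ generated by the components $f_{k,i,j}$ together with the constant function, so it is a subalgebra containing the constants; and it separates the points of $G$ precisely because we have assumed a \emph{separating} analytic presentation. Thus the real content lies in conditions (3) and (4) and in checking that $\|\cdot\|_u$ is genuinely a norm.

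The crucial preliminary observation I would make is that for each $z \in G$ the evaluation homomorphism $\pi_z : \cl A \to \bb C = B(\bb C)$ is itself an admissible \emph{strict} representation: since $z \in G$ we have $\|F_k(z)\| < 1$ for every $k$, which is exactly the strict admissibility inequality $\|(\pi_z(f_{k,i,j}))\| < 1$. Consequently the family of (strictly) admissible representations is nonempty, and for every $(g_{i,j}) \in M_n(\cl A)$ and every $z \in G$ one gets $\|(g_{i,j}(z))\|_{M_n} \le \|(g_{i,j})\|_{u_0} \le \|(g_{i,j})\|_u$. Supping over $z$ yields $\|(g_{i,j})\|_\infty \le \|(g_{i,j})\|_{u_0} \le \|(g_{i,j})\|_u$. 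This simultaneously establishes condition (4), since $|\pi_z(f)| = |f(z)| \le \|f\|_\infty \le \|f\|_u$ shows each point evaluation is bounded, and it forces positive-definiteness: if $\|(g_{i,j})\|_u = 0$ then $(g_{i,j})$ vanishes identically on $G$, hence is $0$.

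Next I would establish that $\|\cdot\|_u$ is finite, which is the one step that genuinely uses the structure of the presentation rather than soft arguments, and which I expect to be the main obstacle to flag. Under any admissible representation $\pi$ the inequality $\|(\pi(f_{k,i,j}))\| \le 1$ forces each individual operator $\pi(f_{k,i,j})$ to be a contraction, being a compression of a contractive operator matrix. A fixed matrix $(g_{i,j}) \in M_n(\cl A)$ involves only finitely many generators and is obtained from them by fixed polynomial expressions, so $\|(\pi(g_{i,j}))\|$ is bounded by a constant depending only on the coefficients of those polynomials and not on $\pi$. Taking the supremum over $\pi$ shows $\|(g_{i,j})\|_u < \infty$, and a fortiori $\|(g_{i,j})\|_{u_0} < \infty$.

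Finally, for condition (3) I would verify the BRS axioms by pushing each one through an individual representation and then passing to the supremum, exactly as in the earlier proof that $\tilde{\cl A}$ is an operator algebra. Each admissible $\pi$ induces a homomorphism $\pi^{(n)} : M_n(\cl A) \to M_n(B(\cl H)) = B(\cl H^{(n)})$, and since $B(\cl H)$ is an operator algebra the three required inequalities hold at the level of $\pi$: for scalar matrices $L, M$ one has $\pi^{(n)}(LGM) = L\,\pi^{(m)}(G)\,M$, so $\|\pi^{(n)}(LGM)\| \le \|L\|\,\|\pi^{(m)}(G)\|\,\|M\|$; multiplicativity of $\pi^{(n)}$ gives $\|\pi^{(n)}(GH)\| \le \|\pi^{(n)}(G)\|\,\|\pi^{(n)}(H)\|$; and the direct sum identity $\pi^{(n+m)}(G \oplus H) = \pi^{(n)}(G) \oplus \pi^{(m)}(H)$ gives $\|\pi^{(n+m)}(G \oplus H)\| = \max\{\|\pi^{(n)}(G)\|, \|\pi^{(m)}(H)\|\}$. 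Supping over all admissible $\pi$ preserves each: the scalar-multiplication and submultiplicativity bounds transfer directly, while for the $L^\infty$ condition I would use the interchange $\sup_\pi \max\{a_\pi, b_\pi\} = \max\{\sup_\pi a_\pi, \sup_\pi b_\pi\}$ over a common index set to conclude $\|G \oplus H\|_u = \max\{\|G\|_u, \|H\|_u\}$. Restricting the index set to strictly admissible representations leaves every one of these arguments intact, so $\|\cdot\|_{u_0}$ satisfies the same axioms. The delicate part is not any single inequality but the bookkeeping around finiteness and nonemptiness of the supremum; once point evaluations are recognized as admissible and the generators as contractions, the remainder is a routine transfer of the operator-algebra structure of $B(\cl H)$ through the supremum.
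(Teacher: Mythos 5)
Your proof is correct and follows essentially the same route as the paper's: the paper also dismisses the BRS verification as routine, invokes the hypotheses for the algebra and point-separation conditions, and uses exactly your key observation that each point evaluation $\pi_\lambda(f)=f(\lambda)$ is an admissible (indeed strict) one-dimensional representation to get boundedness of the evaluation functionals. Your added details on finiteness of the supremum (entries of a contractive operator matrix are contractions, so polynomials in the generators are uniformly bounded over all admissible $\pi$) and on non-degeneracy of the norm are correct fillings-in of what the paper leaves implicit.
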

  \begin{proof} It is clear that it is an operator algebra and by
  definition it is an algebra of functions on G. It follows from the
  hypotheses that it separates points  of G. Finally, for every
  $\lambda = ( \lambda_1,..., \lambda_N) \in G,$ we have a
  representation of $\cl A$ on the one-dimensional Hilbert space given
  by $\pi_{\lambda}(f) = f(\lambda).$  Hence, $|f(\lambda)| \le
  \|f\|_u$ and so $\cl A$ is an operator algebra of functions on G.
  \end{proof}

  It will be convenient to say that matrices, $A_1, \ldots A_m$ are of
  {\em compatible sizes} if the product, $A_1 \cdots A_m$ exists, that
  is, provided that each $A_i$ is an $n_i \times n_{i+1}$ matrix.

  Given an analytically presented domain G, we include one extra function,
  $F_{k_0}$ which denotes the constant function 1.  By an {\bf admissible block
  diagonal matrix over G} we mean a block diagonal matrix-valued
  function of the form
  $D(z) = diag(F_{k_1},...,F_{k_m})$  where $k_i \in I\cup{k_0}$ for $0 \le i \le m.$
  Thus, we are allowing blocks of 1's in $D(z).$
  Finally, given a matrix $B$ we let $B^{(q)}= diag(B,....,B)$ denote
  the block diagonal matrix that repeats $B$ $q$ times.

  \begin{thm}\label{acontr} Let G be an analytically presented domain with
  presentation $\cl R = \{ F_k=(f_{k,i,j}): G \to M_{m_k,n_k}, k \in I
  \},$ let $\cl
  A$ be the algebra of the presentation and
  let $ P=(p_{ij}) \in M_{m,n}(\cl A),$ where $m,n$ are arbitrary. Then the following are equivalent:
  \begin{itemize}
  \item[(i)]$\|P\|_{u} < 1,$
  \item [(ii)] there exists an integer l, matrices of scalars $C_j, \  1
  \le j \le l$ with $\|C_j\| <1 \ $ and admissible block
  diagonal matrices $D_j(z), 1\leq j\leq l,$ which are of compatible sizes and
  are such that \[P(z)= C_1D_1(z) \cdots C_lD_l(z).\]
  \item [(iii)] there exists a positive, invertible matrix $ R \in
  M_{m}$ and matrices $P_0,P_k \in M_{m,r_k}(\cl A),  k \in K, $ where
  $K \subseteq I$ is a finite set, such that
  \[ I_m-P(z)P(w)^*= R + P_0(z)P_0(w)^* + \sum_{k \in K} P_k(z)(I- F_k(z)F_k(w)^*)^{(q_k)}P_k(w)^*\] 
  where $r_k = q_km_k$ and $z=(z_1,..., z_N), \ w=(w_1,..., w_N) \in G.$
  \end{itemize}
  \end{thm}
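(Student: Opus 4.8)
The plan is to prove the equivalences in the cyclic order $(ii) \Rightarrow (i)$, $(i) \Rightarrow (iii)$, and $(iii) \Rightarrow (ii)$, since each arrow has a fairly different flavor: $(ii)\Rightarrow(i)$ is a direct estimate, $(i)\Rightarrow(iii)$ is the genuine factorization (Agler/Arveson-type) statement, and $(iii)\Rightarrow(ii)$ is a matrix-algebra manipulation pulling a root out of a positive semidefinite expression.

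The easiest direction is $(ii) \Rightarrow (i)$. Given a factorization $P(z) = C_1 D_1(z) \cdots C_l D_l(z)$ with $\|C_j\| < 1$ and $D_j$ admissible block diagonal, I would apply an arbitrary admissible representation $\pi$ of $\cl A$. Since each block of $D_j(z)$ is either the constant $1$ or one of the presenting functions $F_k$, and admissibility means $\|(\pi(f_{k,i,j}))\| \le 1$, each $\pi(D_j)$ is a contraction in the appropriate matrix operator space. Multiplying the contractions $\pi(D_j)$ against the strict contractions $C_j$ (which act as scalars and so commute with $\pi$) gives $\|\pi^{(m,n)}(P)\| \le \prod_j \|C_j\| < 1$, and taking the supremum over admissible $\pi$ yields $\|P\|_u < 1$.

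The central and hardest step is $(i) \Rightarrow (iii)$. This is the Agler-type factorization and I expect it to be the main obstacle. The natural route is a Hahn--Banach / cone-separation argument in the spirit of Agler's model theory. First I would reduce to the single block-diagonal presenting function $F(z) = F_1(z) \oplus \cdots$ (using the remark that a finite presentation collapses to one block). The assertion that $I_m - P(z)P(w)^*$ admits the stated decomposition is equivalent to saying it lies in the cone generated by the ``elementary'' positive kernels $R$, $P_0(z)P_0(w)^*$, and the Agler kernels $P_k(z)(I - F_k(z)F_k(w)^*)^{(q_k)}P_k(w)^*$. I would set up the dual cone: a functional that is nonnegative on all such elementary kernels but negative on $I_m - PP^*$ would produce, via a GNS-type construction on finitely many points of $G$, a Hilbert space with a commuting tuple of operators satisfying $\|(\pi(f_{i,j}))\| \le 1$ (i.e.\ an admissible representation) at which $\|\pi(P)\| \ge 1$, contradicting $(i)$. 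The delicate points here are (a) controlling the passage from finite point sets to a global decomposition, likely via a weak-$*$ compactness / normal families argument, and (b) producing the strictly positive invertible $R$ from the \emph{strict} inequality $\|P\|_u < 1$, which provides the gap that becomes $R$.

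For $(iii) \Rightarrow (ii)$, I would exploit the positive decomposition to build a colligation/lurking-isometry. Writing the right-hand side as a single Gramian, the identity $I_m - P(z)P(w)^* = V(z)\, (I - \mathbb{D}(z)\mathbb{D}(w)^*)\, V(w)^* + (\text{strictly positive part})$, where $\mathbb{D}$ assembles the blocks $F_k$ and the constant blocks, lets me rearrange terms so that the map sending $\begin{pmatrix} P(w)^* \\ \mathbb{D}(w)^* V(w)^* \end{pmatrix}$ to $\begin{pmatrix} I \\ V(w)^* \end{pmatrix}$ extends to a contraction; the positivity of $R$ guarantees strict contractivity. Decomposing that global contraction into its block entries produces the scalar contractions $C_j$ with $\|C_j\| < 1$ and the admissible block diagonal factors $D_j(z)$, yielding the product representation. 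The bookkeeping of block sizes (matching $r_k = q_k m_k$ and arranging compatible sizes) is routine but notationally heavy, so I would only indicate the structure of the colligation and leave the dimension-matching to a direct check.
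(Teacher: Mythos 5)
Your direction $(ii)\Rightarrow(i)$ matches the paper's argument exactly, and your cone-separation sketch for $(i)\Rightarrow(iii)$ is a legitimate alternative in the spirit of Agler and Ambrozie--Timotin (the paper instead goes $(i)\Rightarrow(ii)\Rightarrow(iii)$, using BRS for the first arrow and induction on the factorization length for the second). The genuine problem is your step $(iii)\Rightarrow(ii)$. The lurking-isometry/colligation construction you describe does not produce a \emph{finite product} $P(z)=C_1D_1(z)\cdots C_lD_l(z)$; it produces a transfer-function realization $P(z)=A+B\,\bb D(z)\bigl(I-D\,\bb D(z)\bigr)^{-1}C$, i.e.\ an infinite Neumann series $A+\sum_{n\ge 0}B\,\bb D(z)\bigl(D\,\bb D(z)\bigr)^{n}C$. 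There is no routine way to collapse such a realization into a product of finitely many strict scalar contractions interlaced with admissible block diagonal matrices; the ``bookkeeping'' you defer is not bookkeeping but the entire content of $(ii)$. Since your cyclic scheme reaches $(ii)$ only through $(iii)$, condition $(ii)$ is never actually established from $(i)$ in your proposal.

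The idea you are missing is the one the paper uses for $(i)\Rightarrow(ii)$: define $\|P\|_{m,n}=\inf\{\|C_1\|\cdots\|C_l\|\}$ over all factorizations of the form in $(ii)$, verify that this family of matrix norms satisfies the Blecher--Ruan--Sinclair axioms, and invoke BRS to get a completely isometric representation $\pi$ of $\cl A$ for this norm. One then checks that $\pi$ is automatically admissible (each $\|\pi^{(m_k,n_k)}(F_k)\|\le 1$ because $F_k$ itself is an admissible block diagonal matrix), whence $\|P\|_{m,n}=\|(\pi(p_{ij}))\|\le\|P\|_u<1$, and a factorization with $\|C_1\|\cdots\|C_l\|<1$ exists. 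With that arrow in hand, $(iii)\Rightarrow(i)$ is the easy positivity argument (apply an admissible $\Pi$ to the identity in $(iii)$; every term on the right is positive and $R\ge\delta I_m$ forces $\|\Pi(P)\|\le\sqrt{1-\delta}$), and the cycle closes without ever needing a direct $(iii)\Rightarrow(ii)$.
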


  \begin{proof} Although we will not logically need it, we first show
  that (ii) implies (i), since this is the easiest implication and
  helps to illustrate some ideas. Note that if $\pi:\cl A \to B(\cl H)$ is any
  admissible representation, then the norm of $\pi$ of any admissible block
  diagonal matrix is at most 1. Thus, if $P$ has the form of (ii),
  then for any admissible $\pi,$ we will have $(\pi(p_{i,j}))$
  expressed as a product of scalar matrices and operator matrices all
   of norm at most one and hence, $\|(\pi(P_{i,j}))\| \le
  \|C_1\| \cdots \|C_l\| <1.$  Thus, $\|P\|_u \le \|C_1\| \cdots
  \|C_l\| < 1.$

  We now prove that (i) implies (ii). The ideas of the proof are similar
  to \cite[Corollary~18.2]{Pa}, \cite[Corollary~2.11]{BP} and
  \cite[Theorem~1]{LMP} and use in an essential way the abstract
  characterization of operator algebras.
  For each $m,n \in \bb N, $ one proves that
  $\|P\|_{m,n} := inf\{ \|C_1\| \dots \|C_l\| \},$ defines a norm on $M_{m,n}(\cl A),$
  where the infimum is taken over all $l$ and all ways to factor $P(z)=C_1D_1(z_{i_1})\cdots C_lD_l(z_{i_l})$
  as a product of matrices of compatible sizes with scalar matrices
  $C_j, 1 \leq j \leq l$ and admissible block diagonal matrices $D_j, 1 \le j \le l.$

  Moreover, one can verify that $\cl M_{m,n}(\cl A)$ with this family $\{ \|.\|_{m, n}\}_{m,n}$ 
  of norms satisfies the axioms for an abstract unital operator algebra
  as given in \cite{BRS} and hence by the Blecher-Ruan-Sinclair
  representation theorem \cite{BRS}(see also \cite{Pa}) there exists a Hilbert space
  $\cl H$ and a unital completely isometric isomorphism $\pi : \cl A \longrightarrow \cl B(\cl H).\\$
  Thus, for every $m,n \in \bb N $ and for every $ P =(p_{ij})\in \cl M_{m,n}(\cl A),$
  we have $ \|P\|_{m,n} = \|(\pi(p_{ij}))\|.$ However, $\|\pi^{(m_k,n_k)}(F_k)\|=
  \|(\pi(f_{k,i,j}))\| \le 1$ for $1 \le i \le K,$ and
  so, $\pi$ is an admissible representation.
  Thus,  $\|P\|_{m,n} =\|(\pi(p_{ij}))\| \leq \|P\|_{u}.$ Hence, if
  $\|P\|_u <1,$ then $\|P\|_{m,n} < 1$ which implies that such a
  factorization exists.
  This completes the proof that (i) implies (ii).

  We will now prove that (ii) implies (iii).
  Suppose that $P$ has a factorization as in (ii). Let $K \subseteq I$
  be the finite subset of all indices that appear in the
  block-diagonal matrices appearing in the factorization of $P.$ We will use induction on $l$ to prove that (iii) holds.

  First, assume that $l=1$ so that
  $P(z) = C_1D_1(z).$
  Then,
  \begin{eqnarray*}
  I_m - P(z)P(w)^* &=& I_m - ( C_1D_1(z) ){( C_1D_1(w))}^* \nonumber \\
                   &=& \left(I_m - C_1C_1^*\right) + C_1\left(I - D_1(z){D_1(w)}^*\right)C_1^* .\label{a} 
  \end{eqnarray*}
  Since $D_1(z)$ is an admissible block diagonal matrix the $(i,i)$-th
  block diagonal entry of $I - D_1(z)D_1(w)^*$ is $I - F_{k_i}(z)F_{k_i}(w)^*$
  for some finite collection, $k_i.$
 
  Let $E_{k}$ be the diagonal matrix that has 1's wherever $F_k$
  appears(so $E_k=0$ when there is no $F_k$ term in $D_1$).
  Hence, $$C_1 (I - D_1(z)D_1(w)^*)C_1^*= \sum_k C_1E_k(I-F_{k}(z) F_k(w)^*) E_kC_1^*.$$
 
  Therefore, gathering terms for common values of $i,$
  \begin{eqnarray*}
  I_m - P(z)P(w)^* = R_0 + \sum_{k \in K}P_k(I - F_k(z) F_k(w)^*)P_k^*,
  \end{eqnarray*} where $R_0=I_m- C_1C_1^*$ is a positive, invertible
   matrix and  $ P_i$ is, in this case a constant. 
   Thus, the form (iii) holds, when $l=1$.

  We now assume that the form (iii) holds for any $R(z)$ that has a
  factorization of length at most $l-1,$ and assume that
  \[P(z) =C_1D_1(z) \cdots D_{l-1}(z) C_l D_{l}(z) = C_1 D_1(z)R(z),\]
  where $R(z)$ has a factorization of length $l-1.$  
 
  Note that a sum of expressions such as on the right hand side of (iii)
  is again such an expression. This follows by using the fact that
  given any two expressions $A(z), \ B(z),$ we can write 
  \begin{equation*}
  A(z)A(w)^* +B(z)B(w)^*=C(z)C(w)^*
  \end{equation*}
  where $C(z)= (A(z), \ B(z))$. 

  Thus, it will be sufficient to show that $I_m - P(z)P(w)^*$ is a sum of expressions as above.
  To this end we have that,
  \begin{multline*}
  I_m - P(z)P(w)^* = (I_m - C_1D_1(z)D_1(w)^*C_1^*) \\+ (C_1D_1(z))(I -
  R(z)R(w)^*)(D_1(w)^*C_1^*).
  \end{multline*}
  The first term of the above equation is of the form as on the
  right hand side of (iii) by case $l=1$. Also, the quantity
  $(I- R(z)R(w)^*)= R_0R_0^* + R_0(z)R_0(w)^* +\sum_{k \in K} R_k(z)(I- F_k(z)F_k(w)^*)^{(q_k)}R_k(w)^*$
  by the inductive hypothesis. 
  Hence, \begin{multline*} 
  C_1D_1(z)(I - R(z)R(w)^*)D_1(w)^*C_1^* =\\
  (C_1D_1(z)R_0)(C_1D(w)R_0)^* + [C_1D_1(z)R_0(z)][C_1D_1(w)R_0(w)]^*
  +\\ \sum_{k \in K}[C_1D_1(z)R_k(z)](I- F_k(z)F_k(w)^*)^{(q_k)}[C_1D_1(w)R_k(w)]^*.
  \end{multline*}

  Thus, we have expressed $(I- P(z)P(w)^*)$ as a sum of two terms both
  of which can be written in the form desired. Using again our remark
  that the sum of two such expressions is again such an expression, we
  have the required form. 

  Finally, we will prove (iii) implies (i).
  Let $\pi: \cl A \to B(\cl H)$ be an admissible
  representation and let $P=(p_{i,j}) \in M_{m,n}(\cl A)$ have a
  factorization as in (iii). To avoid far too many superscripts we write
  simplify $\pi^{(m,n)}$ to $\Pi.$  

  Now observe that
  \begin{multline*}
  I_m - \Pi(P)\Pi(P)^* = \Pi(R) + \Pi(P_0)\Pi(P_0)^*\\ + \sum_{k \in K} \Pi(P_k)(I-\Pi(F_k)\Pi(F_k)^*)^{(q_k)}(\Pi(P_k))^*.
  \end{multline*}
  Clearly the first two terms of the sum are positive. But since $\pi$ is
  an admissible representation, $\|\Pi(F_k)\| \le 1$ and hence, $(I-
  \Pi(F_k)\Pi(F_k)^*) \ge 0.$  Hence, each term on the right hand side
  of the above inequality is positive and since $R$ is strictly
  positive, say $R \ge \delta I_m$ for some scalar $\delta > 0,$ we have
  that $I_m - \Pi(P)\Pi(P)^* \ge \delta I_m.\\$
  Therefore, $\|\Pi(P)\| \le \sqrt{1- \delta}.$ 
  Thus, since $\pi$ was an arbitrary admissible representation, $\|P\|_{u} \le \sqrt{1- \delta} < 1,$ which proves (i).
  \end{proof}

  When we require the functions in the presentation to be row
  vector-valued, then the above theory simplifies somewhat and begins to
  look more familiar.
  Let G be an analytically presented domain with
  presentation $F_k: G \to M_{1,n_k}, k \in I.$ We identify
  $M_{1,n}$ with the Hilbert space $\bb C^n$ so 
  that $1 - F_k(z)F_k(w)^* = 1 - \langle F_k(z), F_k(w) \rangle,$ where
  the inner product is in $\bb C^{n}.$  In this case we shall say that 
  G is {\em presented by vector-valued functions.}

  \begin{cor}
  Let G be presented by vector-valued functions,
  $F_k=(f_{k,j}): G \to M_{1,n_k}, k \in I,$ let $\cl
  A$ be the algebra of the presentation and
  let $ P=(p_{ij}) \in M_{m,n}(\cl A).$ Then the following are equivalent:
  \begin{itemize}
  \item[(i)] $\|P\|_{u} < 1,$
  \item [(ii)] there exists an integer l, matrices of scalars $C_j, \  1
  \le j \le l$ with $\|C_j\| <1 \ $ and admissible block
  diagonal matrices $D_j(z), 1\leq j\leq l,$ which are of compatible sizes and
  are such that \[P(z)= C_1D_1(z) \cdots C_lD_l(z).\]
  \item [(iii)] there exists a positive, invertible matrix $ R \in
  M_{m}$ and matrices $P_0 \in M_{m, r_0}(\cl A),$ $P_k \in M_{m,r_k}(\cl A),  k\in K, $ where
  $K \subseteq I$ is finite, such that 
  \[I_m-P(z)P(w)^*= R + P_0(z)P_0(w)^* + \sum_{k \in K}(1- \langle
  F_k(z),F_k(w) \rangle) P_k(z)P_k(w)^*\] 
  where $z=(z_1,..., z_N), \ w=(w_1,..., w_N) \in G.$
  \end{itemize}
  \end{cor}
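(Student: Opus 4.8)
The plan is to deduce the corollary directly from Theorem~\ref{acontr} by recognizing that the vector-valued case is simply the special case $m_k = 1$ of the general theorem, in which case the blocks $F_k(z)$ are rows and the expression $I - F_k(z)F_k(w)^*$ becomes the scalar $1 - \langle F_k(z), F_k(w)\rangle$. The equivalences (i)$\Leftrightarrow$(ii) are \emph{identical} in the two statements, since condition (ii) does not refer to the block sizes $m_k$ at all---it only involves the scalar matrices $C_j$, the admissible block diagonal matrices $D_j(z)$, and the factorization $P(z) = C_1 D_1(z) \cdots C_l D_l(z)$. Thus (i)$\Leftrightarrow$(ii) transfers verbatim from the theorem, and the only real content to check is that (iii) of the corollary is the correct specialization of (iii) of the theorem.

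\emph{First} I would observe that in the general theorem (iii), each term in the sum is
\[
P_k(z)(I - F_k(z)F_k(w)^*)^{(q_k)} P_k(w)^*,
\]
where $r_k = q_k m_k$. When $m_k = 1$, the function $F_k(z)$ is a $1 \times n_k$ row, so $F_k(z)F_k(w)^* = \langle F_k(z), F_k(w)\rangle$ is a scalar, and the ampliation $(I - F_k(z)F_k(w)^*)^{(q_k)}$ reduces to the scalar $(1 - \langle F_k(z), F_k(w)\rangle)$ times an identity of size $q_k = r_k$. \emph{Next} I would pull this scalar out of the quadratic form, so that
\[
P_k(z)(1 - \langle F_k(z), F_k(w)\rangle) I_{r_k} P_k(w)^* = (1 - \langle F_k(z), F_k(w)\rangle)\, P_k(z) P_k(w)^*,
\]
which is exactly the term appearing in the corollary's (iii). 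This is a purely notational rearrangement and requires no new argument.

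\emph{The one point} deserving a remark is the treatment of the extra term $P_0(z)P_0(w)^*$: in the general theorem statement no separate size is assigned to $P_0$, whereas the corollary explicitly introduces $P_0 \in M_{m,r_0}(\cl A)$. This is harmless---inspecting the proof of (ii)$\Rightarrow$(iii) in Theorem~\ref{acontr} shows that $P_0$ arises from collecting the constant-block ($F_{k_0} = 1$) contributions, which produce a term of the form $P_0(z)P_0(w)^*$ with $P_0$ a genuine matrix of functions, so one simply records its size explicitly here. \emph{Therefore} the entire corollary follows by specializing $m_k = 1$ in Theorem~\ref{acontr}, substituting $F_k(z)F_k(w)^* = \langle F_k(z), F_k(w)\rangle$, and factoring the resulting scalar out of each summand; no step presents a genuine obstacle, since the hard analytic work---the factorization via the BRS representation theorem and the positivity argument---has already been carried out in the theorem. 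The ``main obstacle,'' such as it is, is merely bookkeeping: confirming that the identifications $M_{1,n} \cong \bb C^n$ and $(1 - \langle\cdot,\cdot\rangle)^{(q_k)} = (1 - \langle\cdot,\cdot\rangle) I_{q_k}$ are applied consistently throughout.
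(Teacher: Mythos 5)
Your proposal is correct and is exactly the argument the paper intends: the corollary is stated as the immediate specialization of Theorem~\ref{acontr} to the case $m_k=1$, using the identification $M_{1,n}\cong \bb C^{n}$ so that $F_k(z)F_k(w)^*=\langle F_k(z),F_k(w)\rangle$ is a scalar that can be pulled out of the quadratic form. Your bookkeeping on the ampliation $(1-\langle F_k(z),F_k(w)\rangle)^{(q_k)}$ and on the size of $P_0$ matches what the paper leaves implicit.
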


   The following result gives us a Nevanlinna-type result for the algebra of presentation. 
 
   \begin{thm}\label{Nevan} Let $Y$ be a finite subset of an
     analytically presented domain G with separating analytic presentation $F_k=(f_{k,i,j}): G
   \to M_{m_k,n_k}, k \in I,$  let $\cl A$ be the algebra of the presentation and let $P$ be a 
   $M_{m,n}$-valued function defined on a finite subset $Y=\{x_1,\cdots,x_l\}$ of $G.$
   Then the following are equivalent:
   \begin{itemize}
   \item[(i)] there exists $\tilde{P} \in M_{mn}(\cl A)$ such that 
   $\tilde{P}|_Y= P$ and $\|\tilde{P}\|_u < 1.$
   \item [(ii)] there exists a positive, invertible matrix $ R \in
   M_{m}$ and matrices $P_0 \in M_{m,r_0}(\cl A),$ $P_k \in M_{m,r_k}(\cl A),  k \in K, $
   where $K \subseteq I$ is a finite set, such that 
   \[ I_m-P(z)P(w)^*= R + P_0(z)P_0(w)^* + \sum_{k \in K} P_k(z)(I- F_k(z)F_k(w)^*)^{(q_k)}P_k(w)^*\]
 \noindent where $r_k = q_km_k$ and $z=(z_1,..., z_N), \ w=(w_1,..., w_N) \in Y.$
  \end{itemize}
  \end{thm}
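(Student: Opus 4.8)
The plan is to prove the two implications separately, treating (i)$\Rightarrow$(ii) as a restriction of Theorem~\ref{acontr} and concentrating all the work in (ii)$\Rightarrow$(i). For (i)$\Rightarrow$(ii), suppose $\tilde P \in M_{m,n}(\cl A)$ satisfies $\tilde P|_Y = P$ and $\|\tilde P\|_u < 1$. I would simply invoke the equivalence (i)$\Leftrightarrow$(iii) of Theorem~\ref{acontr} for $\tilde P$: this produces a positive invertible $R$ and functions $P_0, P_k \in \cl A$ for which the factorization of (iii) holds for all $z,w \in G$. Restricting that identity to $z,w \in Y$ and using $\tilde P|_Y = P$ gives exactly (ii); no further work is required.

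The substance is (ii)$\Rightarrow$(i). The first step is to recognize that (i) is a statement about a quotient. Writing $\cl B = \cl A/I_Y$ with its quotient $u$-norm, an extension $\tilde P$ with $\tilde P|_Y = P$ and $\|\tilde P\|_u < 1$ exists if and only if $\|\pi_Y^{(m,n)}(P)\|_{\cl B} < 1$, since the quotient norm is precisely the infimum of $\|\tilde P\|_u$ over all such extensions. Recall that $\cl B$ is a $|Y|$-idempotent operator algebra, spanned by idempotents $E_1,\dots,E_l$ with $E_pE_q = \delta_{pq}E_p$ and $\sum_p E_p = 1$, where $g \in \cl A$ maps to $\sum_p g(x_p)E_p$. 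By the Blecher--Ruan--Sinclair theorem \cite{BRS} there is a completely isometric unital homomorphism $\rho: \cl B \to B(\cl H)$; set $Q_p = \rho(E_p)$, so that the $Q_p$ are (not necessarily self-adjoint) idempotents with $Q_pQ_q = \delta_{pq}Q_p$ and $\sum_p Q_p = I$. The key point is that $\rho$ is itself an admissible representation: being completely isometric, $\|\rho^{(m_k,n_k)}(\pi_Y(F_k))\| = \|\pi_Y(F_k)\|_{\cl B} \le \|F_k\|_u \le 1$, the last inequality holding because every admissible representation contracts $F_k$.

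The heart of the argument is then to ``apply $\rho$'' to the factorization (ii). Writing $\hat P = \rho^{(m,n)}(\pi_Y(P)) = \sum_p P(x_p)\otimes Q_p$, and likewise $\hat P_0$, $\hat P_k$, $\hat F_k$, I first use $\sum_{p,q}Q_pQ_q^* = \bigl(\sum_p Q_p\bigr)\bigl(\sum_q Q_q\bigr)^* = I$ to write $I_m\otimes I - \hat P\hat P^* = \sum_{p,q}\bigl(I_m - P(x_p)P(x_q)^*\bigr)\otimes Q_pQ_q^*$, and then substitute (ii) for each entry $I_m - P(x_p)P(x_q)^*$ and regroup. The claim is that this yields the single operator identity
\[ I_m\otimes I - \hat P\hat P^* = R\otimes I + \hat P_0\hat P_0^* + \sum_{k\in K}\hat P_k\bigl(I - \hat F_k\hat F_k^*\bigr)^{(q_k)}\hat P_k^*. \]
Granting this, positivity closes the proof at once: admissibility of $\rho$ gives $I - \hat F_k\hat F_k^* \ge 0$, and since $R \ge \delta I_m$ for some $\delta > 0$ we obtain $I_m\otimes I - \hat P\hat P^* \ge \delta I$, hence $\|\pi_Y^{(m,n)}(P)\|_{\cl B} = \|\hat P\| \le \sqrt{1-\delta} < 1$, which is (i).

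The main obstacle is verifying the displayed identity, and this is exactly where the non-self-adjointness of the $Q_p$ must be faced: substituting (ii) produces cross terms $Q_pQ_q^*$ with $p \ne q$ that do not vanish. They are absorbed by two algebraic facts. For the constant term $R$ one again uses $\sum_{p,q}Q_pQ_q^* = I$ to get $R\otimes I$, and the $P_0$ term matches $\hat P_0\hat P_0^*$ directly. For the $F_k$ terms one instead uses $Q_pQ_q = \delta_{pq}Q_p$ to collapse the middle products, since $\hat P_k\hat F_k = \sum_{p,q}P_k(x_p)F_k(x_q)\otimes Q_pQ_q = \sum_p P_k(x_p)F_k(x_p)\otimes Q_p$, whence $\hat P_k(I - \hat F_k\hat F_k^*)^{(q_k)}\hat P_k^* = \sum_{p,q}P_k(x_p)(I - F_k(x_p)F_k(x_q)^*)^{(q_k)}P_k(x_q)^*\otimes Q_pQ_q^*$. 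Carrying out this (notationally heavy but routine) bookkeeping, including the block repetition $(q_k)$, is the one delicate point; I expect the rest to follow cleanly from Theorem~\ref{acontr}, the definition of the quotient norm, and BRS.
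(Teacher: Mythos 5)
Your proof is correct, and the overall strategy coincides with the paper's: both directions reduce (i) to the assertion that the quotient norm of $P$ in the $l$-idempotent algebra $\cl A/I_Y$ is strictly less than one, both take (i)$\Rightarrow$(ii) as an immediate restriction of Theorem~\ref{acontr}, and both hinge on the two facts $\|\pi_Y(F_k)\| \le \|F_k\|_u \le 1$ and $R \ge \delta I_m$. Where you diverge is in how the factorization (ii) is converted into the norm estimate on the quotient. The paper invokes its realization of $k$-idempotent algebras as multiplier algebras (the proposition giving $\cl A/I_Y \cong \cl M(K_Y)$ with $K_Y(x_i,x_j)=E_iE_j^*$) and then reads off positivity of $\bigl((I_m-P(x_i)P(x_j)^*)\otimes K_Y(x_i,x_j)\bigr)$ term by term from (ii), using the multiplier-norm criterion; you instead take a bare BRS representation $\rho$ of the quotient, set $Q_p=\rho(E_p)$, and push the factorization through $\rho$ to obtain a single operator identity, essentially re-running the (iii)$\Rightarrow$(i) computation of Theorem~\ref{acontr} inside $\cl A/I_Y$. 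Your operator identity does check out: the relations $\sum_{p,q}Q_pQ_q^*=I$ and $Q_pQ_q=\delta_{pq}Q_p$ do exactly the work you assign them, and the $(q_k)$ amplification causes no trouble beyond notation. What your route buys is independence from the reproducing-kernel machinery of Section~2 --- only BRS and the idempotent relations are needed; what the paper's route buys is brevity (the positivity of each Schur-type summand is a one-line consequence of the multiplier-norm criterion once $\cl M(K_Y)$ is in hand) and consistency with the paper's running theme of realizing these algebras as multiplier algebras. One small caution: in your verification you write products such as $\hat P_k\hat F_k$ where the sizes only match after the $q_k$-fold block repetition is inserted; this is the bookkeeping you flagged, and it is harmless, but it should be written with the amplified $\hat F_k^{(q_k)}$ throughout.
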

  \begin{proof} 
  Note that $(i) \Rightarrow (ii)$ follows immediately as a corollary of Theorem~\ref{acontr}.
  Thus it only remains to show that $(ii) \Rightarrow (i).$ Since $\cl A$ is an operator algebra of 
  functions, therefore, ${\cl A}/{I_Y}$ is a finite dimensional operator algebra of idempotents 
  and ${\cl A}/{I_Y}= span\{E_1,\cdots, E_l\}$ for some $l= |Y|.$ Thus there exists a 
  Hilbert space, $H_Y$ and a completely isometric representation $\pi$ of ${\cl A}/{I_Y}$.
  By Theorem~\ref{tilde-multiplier}, there exists a kernel $K_Y$ such that 
  $\pi({\cl A}/{I_Y})= \cl M(K_Y)$ completely isometrically under the map 
  $\rho: \pi({\cl A}/{I_Y}) \to \cl M(K_Y)$ which sends $\pi(B)$ to $M_f,$ 
  where $B= \sum_{i=1}^l a_i\pi(E_i)$ and $f:Y \to \bb C$ is a function defined by $f(x_i)=a_i.$
  Note that $((I- F_k(x_i)F_k(x_j)) \otimes K_Y(x_i,x_j)) = ((I-\pi(F_k+I_Y)(x_i)\pi(F_k+I_Y)(x_j)^*) \otimes
  K_Y(x_i,x_j))_{ij} \ge 0 \ \forall \ k \in I$ 
  since $\| \pi(F_k+I_Y) \| \le \|F_k\|_u \le 1\ \forall \ k \in I.$ From this it follows that
   $((I_m-(\pi(P+I_Y))(x_i)(\pi(P+I_Y))(x_j)^*) \otimes K_Y(x_i,x_j))_{ij} \ge
   0.$ Using that $R > 0,$ we get that $\|\pi(P+I_Y)\| < 1.$ This shows that 
  there exists $\tilde{P} \in \cl A$ such that $\tilde{P}|_Y=P$ and $\|\tilde{P}\|_u < 1.$
  This completes the proof.
  \end{proof}

  We now turn towards defining quantized versions of the bounded
  analytic functions on these domains. For this we need to recall that
  the joint Taylor spectrum of a commuting $N$-tuple of operators
  $T=(T_1,...,T_N),$ is a compact set, $\sigma(T) \subseteq \bb C^N$
  and that there is an analytic functional calculus defined for any function
  that is holomorphic in a neighborhood of $\sigma(T).$

  \begin{defn} Let $G \subseteq \bb C^N$ be an analytically presented domain, with
  presentation $\cl R= \{F_k: G^- \to M_{m_k,n_k}, k \in I \}.$ We define
  the {\bf quantized version of G} to be the collection of all
  commuting N-tuples of operators,
  \begin{multline*}
\cl Q(G) =\\ \{T=(T_1,T_2,\ldots, T_N) \in B(\cl H): \sigma(T)
  \subseteq G \text{ and } \|F_k(T)\| \le 1, \forall k \in I \},
\end{multline*} where $\cl H$ is an arbitrary Hilbert space.
  We set \begin{multline*}
  \cl Q_{0,0}(G) =\\ \{ T=(T_1,T_2,\ldots,T_N) \in M_n:  \sigma(T)
  \subseteq G \text{ and } \|F_k(T)\| \le 1, \forall k \in I\},
  \end{multline*}
  where $n$ is an arbitrary positive integer.
  \end{defn}

  Note that if we identify a point $(\lambda_1,...,\lambda_N) \in \bb
  C^N$ with an N-tuple of commuting operators on a one-dimensional
  Hilbert space, then we have that $G \subseteq \cl Q(G).$

  If $T=(T_1,...,T_N) \in \cl Q(G),$ is a commuting N-tuple of operators
  on the Hilbert space $\cl H,$ then since the joint Taylor
  spectrum of $T$ is contained in G, we have that if $f$ is analytic on G, then there is an
  operator $f(T)$ defined and the map $\pi: Hol(G) \to B(\cl H)$ is a
  homomorphism, where $Hol(G)$ denotes the algebra of analytic functions
  on G.

  \begin{defn} Let $G \subseteq \bb C^N$ be an analytically presented domain, with
  presentation $\cl R= \{F_k: G^- \to M_{m_k,n_k}, \ k \in I \}.$  We define $H^{\infty}_{\cl R}(G)$ to be the set of
  functions $f \in Hol(G),$ such that $\|f\|_{\cl R} \equiv \sup \{
  \|f(T)\|: T \in \cl Q(G) \}$ is finite. Given $(f_{i,j}) \in
  M_n(H^{\infty}_{\cl R}(G)),$ we set $\|(f_{i,j})\|_{\cl R} = \sup \{
  \|(f_{i,j}(T))\|: T \in \cl Q(G) \}.$
  \end{defn}

  We are interested in determining when $H_{\cl R}^{\infty}(G) = \tilde{\cl
  A},$ completely isometrically and whether or not the $\|\cdot \|_{\cl R}$
  norm is attained on the smaller set $\cl Q_{0,0}(G).$

  Note that since each point in $G \subseteq \cl Q(G),$ we have that
  $H^{\infty}_ {\cl R}(G) \subseteq H^{\infty}(G),$ and $\|f\|_{\infty} \le
  \|f\|_{\cl R}.$  Also, we have that $\cl A \subseteq H^{\infty}_{\cl R}(G)$ and
  for $(f_{i,j}) \in M_n(\cl A),\   \|(f_{i,j})\|_{\cl R} \le \|(f_{i,j})\|_{u_0}\le \|(f_{i,j})\|_u .$ 
  Thus, the inclusion of $\cl A$ into $H^{\infty}_{\cl R}(G)$ might not even
  be isometric.

  \begin{thm}\label{mainthm} Let G be an analytically presented domain with a separating
  presentation $\cl R= \{ F_k:G \to M_{m_k,n_k}: k \in I
  \},$ let $\cl A$ be the algebra of the presentation and let
  $\widetilde{\cl A}$ be the BPW-completion of $\cl A.$  Then 
  \begin{itemize}
  \item[(i)] $\widetilde{\cl A} = H^{\infty}_{\cl R}(G),$ completely
  isometrically, 
  \item[(ii)] $H^{\infty}_{\cl R}(G)$ is a local, weak*-RFD, dual operator
  algebra,
  \end{itemize}
  \end{thm}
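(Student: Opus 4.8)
The plan is to establish (i) first, since (ii) will then follow almost immediately by invoking the structural results of the earlier sections. The key observation is that $H^{\infty}_{\cl R}(G)$ is precisely the BPW-type completion of $\cl A$ with respect to the $\|\cdot\|_u$ norm, because evaluating a function at a commuting tuple $T \in \cl Q(G)$ is exactly the same as applying an admissible representation $\pi$ (the analytic functional calculus gives the homomorphism $\pi(f) = f(T)$, and the condition $\|F_k(T)\| \le 1$ is exactly admissibility). Thus I would argue that the $\|\cdot\|_{\cl R}$ norm and the norm on $\widetilde{\cl A}$ coincide on $M_n(\cl A)$, and that the two algebras consist of the same functions.

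First I would show $\widetilde{\cl A} \subseteq H^{\infty}_{\cl R}(G)$ isometrically. Given $(f_{i,j}) \in M_n(\widetilde{\cl A})$ with $\|(f_{i,j})\|_L < 1$, I would take a bounded net $(f_{i,j}^{\lambda}) \in M_n(\cl A)$ converging pointwise with $\|(f_{i,j}^{\lambda})\|_u < 1$. For any $T \in \cl Q(G)$, the map $f \mapsto f(T)$ is an admissible representation, so $\|(f_{i,j}^{\lambda}(T))\| < 1$; the difficulty here is passing to the limit, since pointwise convergence of the $f^{\lambda}_{i,j}$ on $G$ does not obviously control $f^{\lambda}_{i,j}(T)$ for a fixed operator tuple $T$. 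I expect this to be the main obstacle, and I would handle it by working locally: by Lemma~\ref{normform}, for each finite $F \subseteq X = G$ there is a single $(g_{i,j}^F) \in M_n(\cl A)$ agreeing with $(f_{i,j})$ on $F$ with $\|(g_{i,j}^F)\|_u < 1$. Combined with Theorem~\ref{Nevan}, which characterizes precisely the $M_{m,n}$-valued functions on a finite set $Y$ that extend to elements of the unit ball of $\cl A$ by the existence of an Agler-type factorization, I can use the factorization to control $f(T)$ through the positivity of $I_m - P(z)P(w)^*$ and the functional calculus, yielding $\|(f_{i,j}(T))\| \le 1$ for every $T$.

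For the reverse inclusion $H^{\infty}_{\cl R}(G) \subseteq \widetilde{\cl A}$, I would take $f \in H^{\infty}_{\cl R}(G)$ with $\|f\|_{\cl R} < 1$ and produce, for each finite $F \subseteq G$, an approximant $g^F \in \cl A$ with $g^F = f$ on $F$ and $\|g^F\|_u \le 1$. This is exactly the $(ii) \Rightarrow (i)$ direction of Theorem~\ref{Nevan}: I would verify that the restriction of $f$ to $F$ satisfies the factorization condition (ii) of that theorem, using that $\|f(T)\| \le 1$ for all $T \in \cl Q(G)$ together with the standard duality/separation argument (the set of tuples supported on the finite-dimensional kernel space $\cl L(K_F, \ff C)$ furnishes enough admissible representations to force the positivity). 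Once the factorization holds on $F$, Theorem~\ref{Nevan} gives the required $g^F$, and the net $\{g^F\}$ converges BPW to $f$, so $f \in \widetilde{\cl A}$ with $\|f\|_L \le 1$. Matching up the norm estimates in both directions gives the complete isometry, establishing (i).

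Finally, (ii) follows by assembling the earlier theorems. By (i), $H^{\infty}_{\cl R}(G)$ is completely isometrically a BPW completion $\widetilde{\cl A}$, hence it is a BPW complete local operator algebra of functions on $G$ by the summary Theorem of Section~1. Being a BPW complete local operator algebra of functions, it is a dual operator algebra by the Corollary following Theorem~\ref{tilde-multiplier}, and it is weak*-RFD by the theorem asserting that every BPW complete local operator algebra of functions is weak*-RFD. This completes the proof.
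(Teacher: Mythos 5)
There is a genuine gap in your first inclusion $\widetilde{\cl A} \subseteq H^{\infty}_{\cl R}(G)$. You correctly identify the obstacle --- pointwise convergence of a bounded net $(f^{\lambda})$ on $G$ does not obviously control $f^{\lambda}(T)$ for a fixed $T \in \cl Q(G)$ --- but the proposed fix does not resolve it. A factorization of $I_m - f(z)f(w)^*$ valid only on a finite set $F$ (which is all Theorem~\ref{Nevan} provides) says nothing about $f(T)$ when $\sigma(T)$ is an infinite subset of $G$, and even for a matrix tuple the Taylor functional calculus sees derivatives of $f$ at the spectral points, not just its values on finitely many points. So ``positivity of $I_m - P(z)P(w)^*$ on finite sets plus the functional calculus'' cannot yield $\|f(T)\| \le 1$; in fact, passing from the Pick-type positivity back to $\|f(T)\|\le 1$ is essentially the statement being proved. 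The missing ingredient is a normal-families argument: since $\|f^{\lambda}\|_{\infty} \le \|f^{\lambda}\|_u < 1$ and the $f^{\lambda}$ are analytic, Montel's theorem gives a subnet converging to $f$ uniformly on compact subsets of $G$, and the analytic functional calculus is continuous in that topology, so $f^{\lambda}(T) \to f(T)$ in norm and $\|f(T)\| \le 1$ follows. This is how the paper closes the gap, and without it your first half does not go through.

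Your second inclusion is workable in spirit but more roundabout than necessary, and the ``duality/separation argument'' you invoke to verify hypothesis (ii) of Theorem~\ref{Nevan} from $\|f\|_{\cl R} < 1$ is left unproved --- note that by Theorem~\ref{Nevan} itself that factorization is \emph{equivalent} to the conclusion you want, so you must produce it by independent means. The paper avoids this entirely with a direct construction: represent $\cl A/I_Y = \operatorname{span}\{E_1,\dots,E_t\}$ on a Hilbert space, set $T_j = y_{1,j}E_1 + \cdots + y_{t,j}E_t$, and observe that $\sigma(T) = Y$, $\|F_k(T)\| = \|\pi_Y(F_k)\| \le \|F_k\|_u \le 1$, so $T \in \cl Q(G)$ and $\|\pi_Y(g)\| = \|g(T)\| \le \|g\|_{\cl R} < 1$; one then lifts to $f_Y \in M_n(\cl A)$ with $\|f_Y\|_u < 1$ agreeing with $g$ on $Y$. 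This gives the BPW net directly, with no appeal to Theorem~\ref{Nevan}. Your derivation of (ii) from (i) via the earlier structural theorems is correct and is exactly what the paper does.
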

  \begin{proof} Let $f \in M_n(\widetilde{\cl A}),$ with $\|f\|_L <1.$
  Then there exists a net of functions, $f_{\lambda} \in M_n(\cl A),$
  such that $\|f_{\lambda}\|_u < 1$ and $\lim_{\lambda} f_{\lambda}(z)
  = f(z)$ for every $z \in G.$ Since $\|f_{\lambda}\|_{\infty} < 1,$
  by Montel's Theorem, there is a subsequence $\{ f_n \}$ of this net that converges to $f$ uniformly on compact sets. 
  Hence, if $T \in \cl Q(G),$ then $\lim_n \|f(T) - f_n(T)\| =0$ and so $\|f(T)\| \le \sup_n \|f_n(T)\| \le 1.$  Thus, we have that $f \in M_n(H^{\infty}_{\cl R}(G)),$ with $\|f\|_{\cl R} \le 1.$ This proves that $\widetilde{\cl A} \subseteq H^{\infty}_{\cl R}(G)$ and that $\|f\|_L \le \|f\|_{\cl R}.$

Conversely,  let $g \in M_n(H^{\infty}_{\cl R}(G)$ with $\|g\|_{\cl R} < 1.$  For any finite set $Y= \{ y_1,..., y_t \} \subseteq G,$ let $\cl A/ I_Y=  span \{ E_1,...,E_t \} $ be the corresponding $t$-idempotent algebra and let $\pi_Y: \cl A \to \cl A/I_Y$ denote the quotient map. Write $y_i = (y_{i,1},..., y_{i,N}), 1 \le i \le t$ and let $T_j = y_{1,j}E_1 + \cdots + y_{t,j} E_t, 1\le j \le N$ so that $T = (T_1,..., T_N)$ is a commuting $N$-tuple of opertors with $\sigma(T) = Y.$  For $k \in I,$ we have that $\|F_k(T)\| = \|F_k(y_1) \otimes E_1 + \cdots + F_k(y_t) \otimes E_t\| = \|\pi_Y(F_k) \| \le \|F_k\|_u =1.$  Thus, $T \in \cl Q(G),$ and so, $\|g(T) \| = \|g(y_1) \otimes E_1 + \cdots + g(y_t) \otimes E_t \| \le \|g\|_{\cl R}< 1.$  Since $\cl A$ separates points, we may pick $f \in M_n(\cl A)$ such that $f=g$ on $Y.$ Hence, $\pi_Y(f) = f(T) =g(T)$ and $\|\pi_Y(f)\| < 1.$ Thus, we may pick $f_Y \in M_n(\cl A),$ such that $\pi_Y(f_Y) = \pi_Y(f)$ and $\|f_Y\|_u < 1.$  This net of functions, $\{ f_Y \}$ converges to $g$ pointwise and is bounded.  Therefore, $g \in M_n(\widetilde{\cl A})$ and $\|g\|_L \le 1.$ This proves that $H^{\infty}_{\cl R}(G) \subseteq \widetilde{\cl A}$ and that $\|g\|_L \le \|g\|_{\cl R}.$

 Thus,  $\widetilde{\cl A} = H^{\infty}_{\cl R}(G)$ and the two matrix
 norms are equal for matrices of all sizes. The rest of the conclusions
 follow from the results on BPW-completions.
 \end{proof}

 \begin{remark}\label{finite} The above result yields that for every $f \in H^{\infty}_{\cl R}(G), 
  \ \|f\|_{\cl R}= \sup\{\|\pi(f)\| \}$ where supremum is taken over all finite dimensional 
  weak*-continuous representations, $\pi: H^{\infty}_{\cl R}(G) \to M_n \ \forall \ n.$
  Under additional hypotheses, we can show that $\|f\|_{\cl R}=\sup\{ \|f(T)\|: T \in \cl Q_{00}(G) \} 
  \ \forall f \in H^{\infty}_{\cl R}(G).$ Also, we can verify these
  hypotheses are met for most of the algebras given 
  in the example section. In particular, for Examples
  ~\ref{polydiskex},~\ref{ballrowex},~\ref{ballcolex},
  ~\ref{ballrowcolex}, \ref{lensex},~\ref{matrixballex}, and ~\ref{annulusex}. It would be interesting to know if 
  this can be done in general.
 \end{remark}

  We now seek other characterisations of the functions in
  $H^{\infty}_{\cl R}(G).$ In particular, we wish to obtain analogues of
  Agler's factorization theorem and of the results in \cite{AT} and \cite{BB}.
  By Theorem~\ref{tilde-multiplier}, if we are given an analytically presented domain $G \subseteq \bb C^N,$ with
  presentation $\cl R= \{F_k: G \to M_{m_k,n_k}, k \in I \},$ then
  there exists a Hilbert space $\cl H$ and a positive definite
  function, $K :G \times G \to B(\cl H)$ such that
  $\widetilde{\cl A}= \cl M(K).$  We shall denote any kernel
  satisfying this property by $K_{\cl R}.$

\begin{defn}  Let $G \subseteq \bb C^N$ be an analytically presented domain, with
  presentation $\cl R= \{F_k: G^- \to M_{m_k,n_k}, k \in I \}.$ We
  shall call a function $H: G \times G \to M_m$ an {\bf $\cl
    R$-limit,} provided that $H$ is the pointwise limit of a net of functions $H_{\lambda}:G
  \times G \to M_m$ of the form given by Theorem~\ref{acontr}(iii).
\end{defn}

\begin{cor}  Let $G \subseteq \bb C^N$ be an analytically presented
  domain, with a separating
  presentation $\cl R= \{F_k: G^- \to M_{m_k,n_k}, k \in I \}.$ Then
  the following are equivalent:
\begin{itemize}
\item[(i)]  $f \in M_m(H^{\infty}_{\cl R}(G))$ and $\|f\|_{\cl R} \le 1,$

\item[(ii)] $(I_m - f(z)f(w)^*) \otimes K_{\cl R}(z,w)$ is positive
  definite,

\item[(iii)] $I_m - f(z)f(w)^*$ is an $\cl R$-limit.
\end{itemize}
\end{cor}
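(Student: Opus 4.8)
The plan is to run the cycle $(i) \Rightarrow (iii) \Rightarrow (ii) \Rightarrow (i)$, obtaining the last arrow for free once $(i) \Leftrightarrow (ii)$ is in hand. Throughout I would lean on the two identifications $H^{\infty}_{\cl R}(G) = \widetilde{\cl A}$ (Theorem~\ref{mainthm}) and $\widetilde{\cl A} = \cl M(K_{\cl R})$ (Theorem~\ref{tilde-multiplier}), so that $\|f\|_{\cl R}$ is \emph{literally} the multiplier norm of $M_f$ on the reproducing kernel Hilbert space with kernel $K_{\cl R}$. The equivalence $(i) \Leftrightarrow (ii)$ is then a direct translation: the characterization of the multiplier norm recorded in Section~2 states that for $f \in M_m(\cl M(K_{\cl R}))$ one has $\|M_f\| \le 1$ if and only if $((I_m - f(x_i)f(x_j)^*) \otimes K_{\cl R}(x_i,x_j))_{i,j} \ge 0$ for every finite $\{x_1,\dots,x_t\} \subseteq G$, which is exactly the positive-definiteness asserted in $(ii)$. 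In particular the positivity in $(ii)$ already forces $f$ to be a contractive multiplier, hence an element of $M_m(H^{\infty}_{\cl R}(G))$, so no analyticity has to be checked separately.

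For $(i) \Rightarrow (iii)$ I would argue by finite-set approximation. Since $\|f\|_{\cl R} = \|f\|_L \le 1$, set $r_n = 1 - 1/n$, so that $\|r_n f\|_L \le r_n < 1$. Directing the finite subsets $Y \subseteq G$ by inclusion and pairing them with $n$, for each $(Y,n)$ Lemma~\ref{normform} produces $g^{Y,n} \in M_m(\cl A)$ with $g^{Y,n}\mid_Y = r_n f\mid_Y$ and $\|g^{Y,n}\|_u < 1$. Theorem~\ref{acontr}$(iii)$ then rewrites $I_m - g^{Y,n}(z)g^{Y,n}(w)^*$ in the factored form, a function defined on all of $G \times G$ that agrees with $I_m - r_n^2\, f(z)f(w)^*$ on $Y \times Y$. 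Ordering the index set by $(Y,n) \le (Y',n')$ iff $Y \subseteq Y'$ and $n \le n'$, this net is eventually exactly equal to $I_m - f(z)f(w)^*$ at any fixed $(z_0,w_0)$ (take $Y \ni z_0,w_0$ and $n$ large), hence converges pointwise to it, exhibiting $I_m - f(z)f(w)^*$ as an $\cl R$-limit. (Alternatively Theorem~\ref{Nevan} supplies the factorization on $Y \times Y$ directly.) The scaling by $r_n$ is precisely what carries the boundary case $\|f\|_{\cl R} = 1$, since Theorem~\ref{acontr} only furnishes the factorization under a strict inequality.

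For $(iii) \Rightarrow (ii)$ it suffices to show that each function of the Theorem~\ref{acontr}$(iii)$ form, tensored with $K_{\cl R}$, is positive definite, because positivity of the finite matrices $(\,\cdot \otimes K_{\cl R}(x_i,x_j))_{i,j}$ is preserved under pointwise limits. The term $R \otimes K_{\cl R}(z,w)$ is positive as a tensor product of positives. The summand $P_0(z)P_0(w)^* \otimes K_{\cl R}(z,w)$ restricts on a finite $Y$ to a congruence $D\,\mathbf{M}\,D^*$ with $D = \mathrm{diag}(P_0(x_i)\otimes I)$ and $\mathbf{M} = (I_{r_0}\otimes K_{\cl R}(x_i,x_j))_{i,j} \ge 0$, hence is positive. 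For the remaining summands the key input is that each $F_k$ is a contractive multiplier: $\|F_k\|_u \le 1$ gives $\|M_{F_k}\| = \|F_k\|_{\widetilde{\cl A}} \le \|F_k\|_u \le 1$, so by the multiplier characterization $(I - F_k(z)F_k(w)^*)\otimes K_{\cl R}(z,w) \ge 0$; the block-diagonal congruence by $\mathrm{diag}(P_k(x_i)\otimes I)$ of the $q_k$-fold repetition of this positive kernel shows $P_k(z)(I - F_k(z)F_k(w)^*)^{(q_k)}P_k(w)^* \otimes K_{\cl R}(z,w) \ge 0$. Summing the three positive pieces and passing to the pointwise limit gives $(ii)$.

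The main obstacle I anticipate is organizational rather than deep: making the boundary case $\|f\|_{\cl R} = 1$ of $(i) \Rightarrow (iii)$ go through cleanly, which forces the double net indexed by $(Y, r_n)$ and the observation that it converges pointwise even though each approximant matches $I_m - f(z)f(w)^*$ only on its own finite set. The one genuinely structural fact is $\|M_{F_k}\| \le 1$, that is, that the defining functions of the presentation are contractive multipliers of $K_{\cl R}$; once this is secured, the positivity in $(iii) \Rightarrow (ii)$ collapses to elementary block-diagonal congruences of known positive matrices.
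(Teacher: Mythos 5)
The paper states this corollary without any proof, treating it as an immediate consequence of Theorems~\ref{tilde-multiplier}, \ref{mainthm} and \ref{acontr}; your argument assembles exactly those intended ingredients --- the identification $H^{\infty}_{\cl R}(G)=\widetilde{\cl A}=\cl M(K_{\cl R})$, the positivity criterion for the multiplier norm, the factorization of strict contractions together with the scaling $r_n f$ to handle the boundary case, and the key fact $\|M_{F_k}\|\le\|F_k\|_u\le 1$ --- and is correct. The only quibble is the phrase ``eventually exactly equal'' in $(i)\Rightarrow(iii)$: at a fixed $(z_0,w_0)$ the net is eventually equal to $I_m-r_n^2\,f(z_0)f(w_0)^*$, which converges to, but does not equal, $I_m-f(z_0)f(w_0)^*$; as your own parenthetical indicates, this still gives pointwise convergence, so the conclusion stands.
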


In the case when the presentation contains only finitely many
functions we can say considerably more about $\cl R$-limits.

  \begin{prop}{\label{Uniform-bound}}
  Let G be an analytically presented domain with a finite
  presentation $\cl R = \{F_k=(f_{k,i,j}): G \to M_{m_k,n_k}, 1 \le k
  \le K \}.$
  For each compact subset $S \subseteq G,$ there exists a constant
  $C,$ depending only on $S,$ such that given a factorization of the form,
  \[ I_m-P(z)P(w)^*= R + P_0(z)P_0(w)^* + \sum_{k=1}^{K} P_k(z)(I- F_k(z)F_k(w)^*)^{(q_k)}P_k(w)^*,\]
  then $\|P_k(z)\| \le C$ $ \forall k, \forall z \in S.$.
  \end{prop}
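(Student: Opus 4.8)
The plan is to evaluate the given factorization on the diagonal $w = z$ and then extract a bound on each $P_k(z)$ purely from positivity, using compactness of $S$ only to bound the defining functions away from norm one. Setting $w = z$ yields
\[ I_m - P(z)P(z)^* = R + P_0(z)P_0(z)^* + \sum_{k=1}^{K} P_k(z)(I - F_k(z)F_k(z)^*)^{(q_k)}P_k(z)^*. \]
The left-hand side satisfies $I_m - P(z)P(z)^* \le I_m$ since $P(z)P(z)^* \ge 0$. On the right-hand side every summand is positive semidefinite: $R > 0$ by hypothesis, $P_0(z)P_0(z)^* \ge 0$ trivially, and for $z \in G$ we have $\|F_k(z)\| < 1$, so $I - F_k(z)F_k(z)^* \ge 0$ and hence each block $P_k(z)(I - F_k(z)F_k(z)^*)^{(q_k)}P_k(z)^* \ge 0$. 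Because a sum of positive operators dominates each individual term, we obtain in particular
\[ P_k(z)(I - F_k(z)F_k(z)^*)^{(q_k)}P_k(z)^* \le I_m \qquad \text{for every } k. \]

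The key quantitative input is a compactness estimate. For each $k$ the map $z \mapsto \|F_k(z)\|$ is continuous on $G$ (the components are analytic, hence continuous), so on the compact set $S \subseteq G$ it attains a maximum $\rho_k := \sup_{z \in S}\|F_k(z)\|$, and since $S \subseteq G$ forces $\|F_k(z)\| < 1$ pointwise, we get $\rho_k < 1$. Consequently, for $z \in S$, $I - F_k(z)F_k(z)^* \ge (1 - \rho_k^2) I$, and since the superscript $(q_k)$ merely repeats this block down the diagonal, we also have $(I - F_k(z)F_k(z)^*)^{(q_k)} \ge (1 - \rho_k^2) I$. Writing the repeated matrix as $(1-\rho_k^2)I$ plus a positive remainder and conjugating by $P_k(z)$ gives $P_k(z)(I - F_k(z)F_k(z)^*)^{(q_k)}P_k(z)^* \ge (1 - \rho_k^2)\,P_k(z)P_k(z)^*$.

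Combining the two displayed inequalities yields $(1 - \rho_k^2)\,P_k(z)P_k(z)^* \le I_m$ for $z \in S$, so that $\|P_k(z)\|^2 = \|P_k(z)P_k(z)^*\| \le (1 - \rho_k^2)^{-1}$. Taking
\[ C = \max_{1 \le k \le K}(1 - \rho_k^2)^{-1/2} \]
then gives $\|P_k(z)\| \le C$ for all $k$ and all $z \in S$, and $C$ depends only on $S$ through the numbers $\rho_k$, as required. There is no serious obstacle here; the argument is essentially a positivity-plus-compactness estimate. The only point that genuinely uses the hypotheses is the finiteness of the presentation: it guarantees that the maximum defining $C$ is over finitely many $k$ and hence finite, whereas for an infinite presentation the $\rho_k$ could tend to $1$ and force $C = \infty$.
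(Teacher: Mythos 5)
Your proof is correct and follows essentially the same route as the paper: restrict to the diagonal $w=z$, use positivity of every summand to dominate the $k$-th term by $I_m$, and use compactness of $S$ to bound $\|F_k(z)\|$ strictly away from $1$ so that $I-F_k(z)F_k(z)^*$ is bounded below by a positive multiple of the identity. Your version is in fact slightly more careful than the paper's (which bounds $I-F_kF_k^*\ge\delta I$ directly from $\|F_k\|\le 1-\delta$ without tracking the square), and your closing remark about why finiteness of the presentation is needed is a nice observation.
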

  \begin{proof}
   By the continuity of the functions, there is a constant $\delta >
   0,$ such that $\|F_k(z)\| \le 1- \delta,$ $\forall k, \forall z \in
   S.$ Thus, we have that $I- F_k(z)F_k(z)^* \ge \delta I,$ $\forall \
   k, \forall z \in S.$
   Also, we have that 
  \begin{multline*}I_m \geq I_m-P(z)P(z)^* 
  \geq P_k(z)(I-F_k(z)F_k(z)^*)^{(q_k)}P_k(z)^* \geq \delta P_k(z)P_k(z)^*
  \end{multline*}
  This shows that  $\|P_k(z)\| \leq 1/{\delta} \ \forall k, \forall \ z \in S.$
  \end{proof}

The proof of the following result is essentially contained in \cite[Lemma~3.3]{BB}.

  \begin{prop}\label{F-limit}
  Let G be a bounded domain in $\bb C^N$ and let
  $F=(f_{i,j}): G \to M_{m,n}$ be analytic with $\|F(z)\|<1$ for $z
  \in G.$ If $H: G \times G \to M_p$ is analytic in the first variables, coanalytic
  in the second variables and there exists a net of 
  matrix-valued functions $P_{\lambda} \in M_{p,r_{\lambda}}(Hol(G))$
  which are uniformly bounded on compact subsets of G, such that 
  $H(z,w)$ is the pointwise limit of $H_{\lambda}(z,w) = P_{\lambda}(z)(I_m -
  F(z)F(w)^*)^{(q_{\lambda})}P_{\lambda}(w)^*$ where $r_{\lambda}=
  q_{\lambda}m_k,$ then there exists a Hilbert space $\cl H$ and an
  analytic function, $R:G \to B(\cl H \otimes \bb C^M, \bb C^p)$ such
  that
  $H(z,w) = R(z)[(I_m - F(z)F(w)^*) \otimes I_{\cl H}]R(w)^*.$  
  \end{prop}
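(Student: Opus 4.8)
The goal is to produce, from the pointwise limit $H(z,w) = \lim_\lambda P_\lambda(z)(I_m - F(z)F(w)^*)^{(q_\lambda)}P_\lambda(w)^*$, a single Hilbert space $\cl H$ and analytic $R:G \to B(\cl H \otimes \bb C^M, \bb C^p)$ with $H(z,w) = R(z)[(I_m - F(z)F(w)^*) \otimes I_{\cl H}]R(w)^*$. The natural strategy is to pass to a cluster point of the data $(P_\lambda, q_\lambda)$ after repackaging each $H_\lambda$ in a ``unitized'' form on a common target space. First I would rewrite each term: since $(I_m-F(z)F(w)^*)^{(q_\lambda)}$ is just $q_\lambda$ copies of $I_m-F(z)F(w)^*$ along the diagonal, I can factor it as $[(I_m-F(z)F(w)^*)\otimes I_{\bb C^{q_\lambda}}]$, and then write $H_\lambda(z,w) = R_\lambda(z)[(I_m - F(z)F(w)^*)\otimes I_{\bb C^{q_\lambda}}]R_\lambda(w)^*$ where $R_\lambda(z) = P_\lambda(z)$ is viewed as a map $\bb C^m \otimes \bb C^{q_\lambda} \to \bb C^p$. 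The point of this rewriting is that the only thing that changes with $\lambda$ is the multiplicity space $\bb C^{q_\lambda}$, so the varying ``inner'' factor $I_m - F(z)F(w)^*$ is isolated.

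The heart of the argument is a compactness-plus-normalization step. By the hypothesis that the $P_\lambda$ are uniformly bounded on compact subsets of $G$ (which, via Proposition~\ref{Uniform-bound}, is the typical situation), the functions $R_\lambda(z)$ lie in a bounded set of analytic $\bb C^p$-valued-row maps on each compact subset. The plan is to use a Montel / normal-families argument to extract a subnet along which $R_\lambda$ converges, but the difficulty is that the target spaces $\bb C^m \otimes \bb C^{q_\lambda}$ have growing dimension. To handle this I would embed everything into a fixed large space: pick an infinite-dimensional separable $\cl H$, include each $\bb C^{q_\lambda} \hookrightarrow \cl H$ isometrically, and regard $R_\lambda$ as a map $\bb C^m \otimes \cl H \to \bb C^p$ that is supported on the finite-dimensional corner $\bb C^m \otimes \bb C^{q_\lambda}$. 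Then each $R_\lambda(z)$ is a uniformly (on compacta) bounded element of $B(\bb C^m \otimes \cl H, \bb C^p)$, and one takes a weak-$*$ cluster point $R(z)$ of the net; weak-$*$ compactness of bounded balls in $B(\bb C^m \otimes \cl H, \bb C^p)$ gives convergence of the matrix entries $\langle R_\lambda(z) (e_i \otimes h), e_l\rangle$ for each fixed $h \in \cl H$.

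The main obstacle is ensuring that the limiting $R(z)$ is genuinely analytic in $z$ and that the limit of the sandwiched expressions is $R(z)[(I_m-F(z)F(w)^*)\otimes I_{\cl H}]R(w)^*$ rather than something involving a non-factorable remainder. For analyticity I would argue entrywise: each scalar function $z \mapsto \langle R_\lambda(z)(e_i\otimes h), e_l\rangle$ is analytic and locally uniformly bounded, so Montel gives a locally uniformly convergent subnet whose limit is analytic; a diagonal/cluster-point argument over a countable dense set of $h$'s and of points $z$ upgrades this to analyticity of $R$. For the identity itself, fixing $z,w$ the expression $R_\lambda(z)[(I_m-F(z)F(w)^*)\otimes I_{\cl H}]R_\lambda(w)^*$ is continuous in $R_\lambda(z), R_\lambda(w)$ for the relevant (weak, finite-rank-truncated) topology because $(I_m-F(z)F(w)^*)$ is a fixed finite matrix; passing to the limit of each matrix entry yields exactly the claimed formula. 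The one technical point requiring care is that weak-$*$ convergence of the $R_\lambda(z)$ separately in $z$ and in $w$ suffices to pass to the limit in the bilinear expression precisely because the inner factor is finite-dimensional and independent of $\lambda$, so no joint-convergence subtlety arises. Finally I would note that since $H$ is already given to be analytic in $z$ and coanalytic in $w$, the produced factorization is automatically consistent, completing the proof.
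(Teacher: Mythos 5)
Your opening reduction --- viewing $(I_m-F(z)F(w)^*)^{(q_\lambda)}$ as $(I_m-F(z)F(w)^*)\otimes I_{\bb C^{q_\lambda}}$ and $P_\lambda(z)$ as a map $\bb C^m\otimes\bb C^{q_\lambda}\to\bb C^p$ --- is exactly how the paper begins. But the core of your argument has a genuine gap: you take a weak-$*$ (equivalently, for bounded nets, WOT) cluster point $R(z)$ of the embedded factors $R_\lambda(z)\in B(\bb C^m\otimes\cl H,\bb C^p)$ and then assert that ``weak-$*$ convergence of the $R_\lambda(z)$ separately in $z$ and in $w$ suffices to pass to the limit in the bilinear expression \ldots so no joint-convergence subtlety arises.'' This is precisely where the argument fails. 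The entries of $R_\lambda(z)[(I_m-F(z)F(w)^*)\otimes I_{\cl H}]R_\lambda(w)^*$ have the form $\langle A\,x_\lambda, y_\lambda\rangle$ with \emph{both} vectors $x_\lambda=R_\lambda(w)^*e_l$ and $y_\lambda=R_\lambda(z)^*e_k$ varying, and the operator $(I_m-F(z)F(w)^*)\otimes I_{\cl H}$ is a finite matrix tensored with the identity of an infinite-dimensional space --- it is neither finite rank nor compact, so weak convergence of $x_\lambda$ and $y_\lambda$ gives no control on $\langle A x_\lambda, y_\lambda\rangle$. A concrete failure: take $p=m=1$, $F\equiv 0$, and $P_\lambda(z)$ the constant row vector $e_\lambda^{\,t}$ (the $\lambda$-th standard basis vector of $\bb C^{q_\lambda}\hookrightarrow\cl H$). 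Then $H_\lambda(z,w)\equiv 1$, so $H\equiv 1$, yet every weak-$*$ cluster point of $R_\lambda$ is $0$, and your formula would give $R(z)R(w)^*=0\neq H(z,w)$. The mass escapes to infinity in the multiplicity space exactly because the target dimensions $q_\lambda$ grow, which is the difficulty you correctly identified but did not actually resolve.

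The paper circumvents this by taking limits not of the factors but of the sesquilinear (Gram) data they generate. Writing $P_\lambda(z)=[P_1^\lambda(z),\dots,P_m^\lambda(z)]$, one has $H_\lambda(z,w)=\sum_i P_i^\lambda(z)P_i^\lambda(w)^*-\sum_{i,j}f_i(z)\overline{f_j(w)}\,P_i^\lambda(z)P_j^\lambda(w)^*$, so the only quantities that matter are the kernels $K_\lambda(z,w)=\bigl(P_i^\lambda(z)P_j^\lambda(w)^*\bigr)$, which take values in the \emph{fixed} finite-dimensional space $M_m(M_p)$, are positive definite, analytic in $z$, coanalytic in $w$, and uniformly bounded on compacta. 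Montel's theorem gives a subnet converging locally uniformly to a positive definite $K=(K_{i,j})$, and Moore's theorem (the vector-valued RKHS construction) re-factors the \emph{limit}: $K(z,w)=E(z)E(w)^*$ with $E:G\to B(\cl H,\bb C^m\otimes\bb C^p)$ analytic, whence $K_{i,j}(z,w)=E_i(z)E_j(w)^*$ over a single Hilbert space $\cl H$, and $R$ is assembled from the $E_i$. In short: limit first at the level of kernels, factor afterwards. If you replace your weak-$*$ cluster-point step with this ``limit the Gram kernel, then re-factor'' step, the rest of your outline goes through.
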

  \begin{proof} We identify $(I_m - F(z)F(w)^*)^{(q_{\lambda})} = (I_m -
  F(z)F(w)^*) \otimes I_{\bb C^{q_{\lambda}}},$ and the $p \times
  mq_{\lambda}$ matrix-valued function $P_{\lambda}$ as an analytic
  function, $P_{\lambda}:G \to B(\bb C^m \otimes \bb C^{q_{\lambda}},
  \bb C^p).$ Writing $\bb C^m \otimes \bb C^{q_{\lambda}}= \bb
  C^{q_{\lambda}} \oplus \cdots C^{q_{\lambda}}$(m times) allows us to
  write $P_{\lambda}(z) = [ P^{\lambda}_1(z), \ldots ,
  P^{\lambda}_m(z)]$ where each $P^{\lambda}_i(z)$ is $p \times
  q_{\lambda}.$
  Also, if we let $f_1(z),...,f_m(z)$ be the $(1,n)$ vectors that
  represent the rows of the matrix $F,$ then we have that $F(z)F(w)^* =
  \sum_{i,j=1}^m f_i(z)f_j(w)^* E_{i,j}.$

  Finally, we have that \[H_{\lambda}(z,w) = \sum_{i=1}^m
  P^{\lambda}_i(z)P^{\lambda}_i(w)^* - \sum_{i,j=1}^m f_i(z)f_j(w)^*
  P^{\lambda}_i(z)P^{\lambda}_j(w).\]

  Let $K_{\lambda}(z,w) = (P^{\lambda}_i(z)P^{\lambda}_j(w)^*),$ so that
  $K_{\lambda}:G \times G \to M_m(M_p) = B(\bb C^m \otimes \bb C^p),$ is
  a positive definite function that is analytic in $z$ and co-analytic
  in $w.$ By dropping to a subnet, if necessary,
  we may assume that $K_{\lambda}$ converges uniformly on compact
  subsets of G to $K=(K_{i,j}):G \times G \to M_m(M_p).$ Note that this
  implies that $P^{\lambda}_i(z)P^{\lambda}_j(w)^* \to K_{i,j}(z,w)$ for
  all $i,j$ and that $K$ is a positive definite function that is analytic
  in $z$ and coanalytic in $w.$ 

  The positive definite $K$ gives rise to a reproducing kernel Hilbert
  space $\cl H$ of analytic $\bb C^m \otimes \bb C^p$-valued functions on G.
  If we let $E(z): \cl H \to \bb C^m \otimes \bb C^p,$ be the evaluation
  functional, then $K(z,w) = E(z)E(w)^*$ and $E:G \to B(\cl H, \bb C^m
  \otimes \bb C^p)$ is analytic.  Identifying $\bb C^m \otimes \bb C^p =
  \bb C^p \oplus \cdots \oplus \bb C^p$(m times), yields analytic
  functions,
  $E_i:G \to B(\cl H, \bb C^p),i=1,...,m,$ such that $(K_{i,j}(z,w))=
  K(z,w) = E(z)E(w)^* = (E_i(z)E_j(w)^*).$

  Define an analytic map $R:G \to B(\cl H \otimes \bb C^m, \bb C^p)$ by
  identifying $\cl H \otimes \bb C^m = \cl H \oplus \cdots \oplus \cl
  H$(m times) and setting $R(z)(h_1 \oplus \cdots \oplus h_m) =
  E_1(z)h_1 + \cdots +E_m(z)h_m.$
  Thus, we have that \begin{multline*}
  R(z)[(I_m - F(z)F(w)^*) \otimes I_{\cl H}] R(w)^*= \\ \sum_{i=1}^m E_i(z)E_i(w)^*
  - \sum_{i,j=1}^m f_i(z)f_j(w)^*E_i(z)E_j(w)^* =\\
  \sum_{i=1}^m K_{i,i}(z,w) - \sum_{i,j=1}^m f_i(z)f_j(w)^*K_{i,j}(z,w)
  =\\ \lim_{\lambda} \sum_{i=1}^m P_i^{\lambda}(z)P_i^{\lambda}(w)^* -
  \sum_{i,j=1}^m f_i(z)f_j(w)^* P_i^{\lambda}(z)P_j^{\lambda}(w)^* =
  H(z,w),
  \end{multline*}
  and the proof is complete.
  \end{proof}

  \begin{remark}\label{converse F-limit} 
  Conversely, any function that can be written in the
  form $H(z,w) = R(z)[(I_m - F(z)F(w)^*) \otimes I_{\cl H}]R(w)^*$ can
  be expressed as a limit of a net as above by considering the
  directed set of all
  finite dimensional subspaces of $\cl H$ and for each finite
  dimensional subspace setting $H_{\cl F}(z,w) = R_{\cl F}(z)[(I_m -
  F(z)F(w)^*) \otimes I_{\cl F}]R_{\cl F}(w)^*,$ where $R_{\cl F}(z) =
  R(z)(P_{\cl F} \otimes I_m)$ with $P_{\cl F}$ the orthogonal
  projection onto $\cl F.$
  \end{remark}

  \begin{defn} We shall refer to a function $H: G \times G \to
  M_m(M_p)$ that can
  be expressed as $H(z,w) = R(z)[(I_m - F(z)F(w)^*) \otimes \cl
  H]R(w)^*$ for some Hilbert space $\cl H$ and some analytic function
  $R:G \to B(\cl H \otimes \bb C^m, \bb C^p),$ as an {\bf F-limit.}
  \end{defn}

  \begin{thm}\label{tilde-Agler} Let G be an analytically
    presented domain with a finite separating
  presentation $\cl R = \{F_k=(f_{k,i,j}): G \to M_{m_k,n_k}, 1 \le
  k \le K \},$
  let $ f=(f_{ij})$ 
  be a $M_{m,n}$-valued function defined on G.
  Then the following are equivalent:
  \begin{itemize}
  \item[(1)] $f \in M_{mn}(H^{\infty}_{\cl R}(G))$ and $\|f\|_{\cl R} \le 1,$
  \item [(2)]there exists an analytic operator-valued
  function $R_0:G \to B(\cl H_0, \bb C^m)$ and $F_k$-limits, $H_k:G
  \times G \to M_m,$ such that \[I- f(z)f(w)^* = R_0(z)R_0(w)^* +
  \sum_{k=1}^K H_k(z,w) \ \forall \ z,w \in G,\]
\item[(3)] there exist $F_k$-limits, $H_k(z,w),$ such that
\[I- f(z)f(w)^* = \sum_{k=1}^K H_k(z,w) \ \forall z,w \in G.\]
  \end{itemize}
  \end{thm}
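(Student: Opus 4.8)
The plan is to run the cycle $(1)\Rightarrow(2)\Rightarrow(3)\Rightarrow(1)$, with the finiteness of the presentation entering decisively through Propositions~\ref{Uniform-bound} and~\ref{F-limit}. The implication $(3)\Rightarrow(1)$ (and, because the extra summand $R_0(z)R_0(w)^*$ is positive, also $(2)\Rightarrow(1)$) is the routine sufficiency direction, handled by a hereditary analytic functional calculus. Given $T\in\cl Q(G)$ on a Hilbert space, so that $\sigma(T)\subseteq G$ and $\|F_k(T)\|\le 1$, I would substitute $z\mapsto T$ and $\bar w\mapsto T^*$ in the kernel identity of (3). Each $F_k$-limit $H_k(z,w)=R_k(z)[(I-F_k(z)F_k(w)^*)\otimes I_{\cl H_k}]R_k(w)^*$ then becomes $R_k(T)[(I-F_k(T)F_k(T)^*)\otimes I]R_k(T)^*\ge 0$, since $I-F_k(T)F_k(T)^*\ge 0$; approximating $R_k$ by its finite-dimensional truncations as in Remark~\ref{converse F-limit} and using the uniform bounds to pass to the limit, $\sum_k H_k(T,T^*)$ is a positive operator equal to $I-f(T)f(T)^*$. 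Hence $\|f(T)\|\le 1$ for all $T\in\cl Q(G)$, so $\|f\|_{\cl R}\le 1$.

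For $(1)\Rightarrow(2)$ I would first invoke the Corollary characterizing the unit ball: since $\|f\|_{\cl R}\le 1$, the function $I_m-f(z)f(w)^*$ is an $\cl R$-limit, i.e.\ the pointwise limit of a net $H_\lambda$ of functions of the form in Theorem~\ref{acontr}(iii). As $\cl R$ is finite, each $H_\lambda=I_m-P_\lambda(z)P_\lambda(w)^*$ (with $\|P_\lambda\|_u<1$) can be taken as $R_\lambda+P_{0,\lambda}(z)P_{0,\lambda}(w)^*+\sum_{k=1}^K P_{k,\lambda}(z)(I-F_k(z)F_k(w)^*)^{(q_{k,\lambda})}P_{k,\lambda}(w)^*$. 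The crucial point is that Proposition~\ref{Uniform-bound} bounds each $P_{k,\lambda}$ uniformly in $\lambda$ on every compact $S\subseteq G$ by a constant depending only on $S$; moreover $P_{0,\lambda}(z)P_{0,\lambda}(z)^*\le I_m$ and $R_\lambda\le I_m$ on the diagonal, so these are uniformly bounded as well. Passing to a subnet, a normal-families (Montel) argument makes the fixed-size Gram kernels $(P^\lambda_{k,i}(z)P^\lambda_{k,j}(w)^*)_{i,j}$, the kernel $P_{0,\lambda}(z)P_{0,\lambda}(w)^*$, and the constants $R_\lambda$ all converge uniformly on compacta, simultaneously for the finitely many $k$. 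Proposition~\ref{F-limit} then identifies each $\lim_\lambda P_{k,\lambda}(z)(I-F_kF_k^*)^{(q_{k,\lambda})}P_{k,\lambda}(w)^*$ as an $F_k$-limit $H_k$, while the limits of $R_\lambda$ and of $P_{0,\lambda}P_{0,\lambda}^*$ combine into a single positive analytic--coanalytic kernel, which by Moore's theorem is of the form $R_0(z)R_0(w)^*$. This yields $(2)$.

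The implication $(2)\Rightarrow(3)$ is where I expect the real difficulty, since it requires absorbing the free summand $R_0(z)R_0(w)^*$ into the $F_k$-limits. The mechanism I would use is a Szeg\H{o}-type inverse of $I-F_kF_k^*$: fixing one index $k_0$, the strict inequality $\|F_{k_0}(z)\|<1$ on $G$ should produce a positive definite kernel $\Gamma$ inverting $I-F_{k_0}(z)F_{k_0}(w)^*$, so that $R_0(z)R_0(w)^*$ is rewritten as an $F_{k_0}$-limit and merged with $H_{k_0}$. In the vector-valued case $F_{k_0}:G\to M_{1,n_{k_0}}$ this is clean: $\langle F_{k_0}(z),F_{k_0}(w)\rangle$ is a positive definite scalar kernel of modulus $<1$, so $\Gamma(z,w)=\sum_{n\ge 0}\langle F_{k_0}(z),F_{k_0}(w)\rangle^{n}$ is positive definite by Schur's theorem, and $R_0(z)R_0(w)^*=\bigl(R_0(z)R_0(w)^*\star\Gamma(z,w)\bigr)\bigl(1-\langle F_{k_0}(z),F_{k_0}(w)\rangle\bigr)$ exhibits it as an $F_{k_0}$-limit, the Schur-product factor $\star$ being positive definite. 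For a genuinely matrix-valued $F_{k_0}$ the scalar Neumann series must be replaced by a Fock/Arveson-space positive inverse of $I-F_{k_0}F_{k_0}^*$, and verifying that this still yields an $F_{k_0}$-limit is the delicate technical heart of the proof.

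Closing the cycle gives all three equivalences. I would emphasize that finiteness of $\cl R$ is used twice and essentially: once for the uniform bounds of Proposition~\ref{Uniform-bound}, and once to extract a single subnet over the finitely many $k$. An alternative to the absorption step would be to derive $(3)$ directly from the positivity $(I_m-f(z)f(w)^*)\otimes K_{\cl R}(z,w)\ge 0$ furnished by Theorem~\ref{tilde-multiplier}; but since $K_{\cl R}$ is only available abstractly as a direct sum over finite subsets, I do not expect enough explicit structure there, and I would rely on the Szeg\H{o}-inverse absorption as the main route.
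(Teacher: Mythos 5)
Your implication $(1)\Rightarrow(2)$ is essentially the paper's argument: pull back to a bounded net of factorizations from Theorem~\ref{acontr}(iii), use Proposition~\ref{Uniform-bound} for the uniform compact bounds, and Proposition~\ref{F-limit} plus a Montel/subnet extraction over the finitely many $k$ (the paper folds the $R_\lambda$ and $P_{0,\lambda}P_{0,\lambda}^*$ terms into an $F_0$-limit with $F_0\equiv 0$ rather than invoking Moore's theorem, but that is cosmetic). Your two points of divergence are where the issues lie. First, for the sufficiency direction $(2),(3)\Rightarrow(1)$ you propose a hereditary functional calculus, substituting $z\mapsto T$, $\bar w\mapsto T^*$ for $T\in\cl Q(G)$. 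For a general \emph{analytically} presented domain this substitution is not canonically defined: it acts on product \emph{representations} $\sum_i A_i(z)B_i(w)^*$, not on the kernel as a function, so the functional identity $I-f(z)f(w)^*=\sum_k H_k(z,w)$ does not automatically transfer to the operator identity $I-f(T)f(T)^*=\sum_k H_k(T,T^*)$; making it well defined requires a coefficientwise (power-series) argument whose convergence at an arbitrary $T$ with $\sigma(T)\subseteq G$ is exactly the hard point, and your sketch does not address it. The paper avoids this entirely: it tensors the factorization with the kernel $K_{\cl R}$ of Theorem~\ref{tilde-multiplier}, notes that each $(I-F_k(z)F_k(w)^*)\otimes K_{\cl R}(z,w)$ is positive definite because $\|F_k\|_u\le 1$, and concludes $\bigl((I_m-f(z)f(w)^*)\otimes K_{\cl R}(z,w)\bigr)\ge 0$, i.e.\ $\|M_f\|\le 1$ in $\cl M(K_{\cl R})=\widetilde{\cl A}=H^\infty_{\cl R}(G)$. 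This uses only Schur-product positivity and is the route you dismissed at the end as lacking ``explicit structure''; in fact no explicit structure of $K_{\cl R}$ is needed, only its existence. As written, your $(3)\Rightarrow(1)$ has a genuine gap.

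Second, for $(2)\Rightarrow(3)$ you correctly identify the Szeg\H{o}/Neumann-series absorption, but the ``delicate technical heart'' you anticipate for matrix-valued $F_{k_0}$ does not arise. The paper (following \cite{BB}) compresses to the \emph{first row} of $F_{k_0}$: since $\|F_{k_0}(z)\|<1$ on $G$, the scalar kernel $H(z,w)=1-\sum_j f_{k_0,1,j}(z)\overline{f_{k_0,1,j}(w)}$ is the $(1,1)$-entry of $I-F_{k_0}(z)F_{k_0}(w)^*$, hence an $F_{k_0}$-limit, and your scalar Neumann-series argument applies verbatim to give $H^{-1}$ positive definite; then $R_0(z)R_0(w)^*=G_0(z)H(z,w)G_0(w)^*$ absorbs the free summand. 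No Fock-space inverse of the full matrix kernel is needed, so the step you left open closes in one line.
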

  \begin{proof} Recall that $\tilde{\cl A} = H^{\infty}_{\cl R}(G).$
  Let us first assume that $f \in M_{mn}(\tilde{A})$ and $\|f\|_{M_{mn}(\tilde{A})} < 1.$
  Then for each finite set $Y$, there exists $f_Y \in M_{mm}(\cl A)$ such that $ f_Y$ converges to f pointwise and 
  $\|f_Y\|_u \leq 1.$ We may assume that $\|f_Y\|_u < 1$ 
  by replacing $f_Y$ by $\frac{f_Y}{1+1/|Y|},$
  where $|Y|$ denotes the cardinality of the set $Y$.\\
  \indent Thus by Theorem~\ref{acontr}
  there exists a positive, invertible matrix $ R^Y\in
  M_{m}$ and matrices $P^Y_k \in M_{m,r_{k_Y}}(\cl A),  0 \leq k \leq K, $ such that 
  \[ I_m-f_Y(z)f_Y(w)^*= R^Y + P^Y_0(z)P^Y_0(w)^* + \sum_{k=1}^{K} P^Y_k(z)(I- F_k(z)F_k(w)^*)^{(q_{k_Y})}P^Y_k(w)^*\]
  where $r_{k_Y} = q_{k_Y}m_k$ and $z, \ w \in G.$ 
  If we define a map $F_0:G \to M_{m_0,n_0}$ as the zero map 
  then the above factorization can be written as 
  \[I_m-f_Y(z)f_Y(w)^*= R^Y + \sum_{k=0}^{K} P^Y_k(z)(I- F_k(z)F_k(w)^*)^{(q_{k_Y})}P^Y_k(w)^*\] 
  where $r_{k_Y} = q_{k_Y}m_k$ and $z, \ w \in G.$ \\
  \indent Note that the net $R^Y$  is uniformly bounded above by 1, 
  thus there exists $R\in M_m$ and a subnet $R^{Y_s}$ which converges to R.\\
  
  Finally, since the net $f_Y$ converges to f pointwise we have that the net 
  $\sum_{k=1}^{K} P^Y_k(z)(I- F_k(z)F_k(w)^*)^{(q_{k_Y})}P^Y_k(w)^*$
  converges pointwise on $G$. Also note that for each k, $<P_k^Y>$ is the net of 
  vector-valued holomorphic function and is uniformly bounded on compact subsets of $G$
  by Proposition~\ref{Uniform-bound}. \\
  
  Thus by Proposition~\ref{F-limit} there exists $F_k$-limit
  for each $0 \le k \le K,$ that is, there exists K+1 Hilbert spaces $\cl H_k$ and K+1
  analytic function, $R_k:G \to B(\cl H_k \otimes \bb C^M, \bb C^p)$ such
  that \\ $H_k(z,w) = R_k(z)[(I_m - F_k(z)F_k(w)^*) \otimes I_{\cl H_k}]R_k(w)^*$
  and the corresponding subnet of the net  $\sum_{k=0}^{K} P^Y_k(z)(I- F_k(z)F_k(w)^*)^{(q_{k_Y})}P^Y_k(w)^*$
  converges to  $\sum_{k=0}^K H_k(z,w) \ \forall \ z,\ w \in G.$ This completes the proof that (1) implies (2). 

  \indent To show the converse, assume that there exists an analytic operator-valued
  function $R_0:G \to B(\cl H_0, \bb C^m)$ and K analytic functions, 
  $R_k:G \to B(\cl H_k \otimes \bb C^M, \bb C^p)$ on some Hilbert space $H_k$
  such that $I- f(z)f(w)^* = R_0(z)R_0(w)^* + \sum_{k=1}^K R_k(z)(I- F_k(z)F_k(w)^*)^{(q_{k})}R_k(w)^* \ \forall \ z,w \in G.$ 
  By using Theorem~\ref{tilde-multiplier} there exists a vector-valued Kernel $K$ such that
  $M_n(\cl M(K))=M_n(\tilde{A})$ completely isometrically for every n. It is easy to see that
  $((I- f(z)f(w)^*) \otimes K(z,w)) \geq 0 \ \forall z,\ w \in G.$ This is equivalent to 
  $f\in M_m(\cl M(K))$ and $\|M_f\| \leq 1$ which in turn is
  equivalent to (1). Thus, (1) and (2) are equivalent.

Clearly, (3) implies (2). The arguement for why (2) implies (3) is
contained in \cite{BB} and we recall it. If we fix any $k_0,$ then
since $\|F_{k_0}(z)\|<1$ on $G,$ we have that $|f_{k_0,1,1}(z)|^2 +
\cdots + |f_{k_o,1,m}(z)|^2 < 1$ on $G.$ From this it follows that
$H(z,w) = (1
- f_{k_0,1,1}(z)\bar{f_{k_0,1,1}(w)} - \cdots -
f_{k_0,1,m}(z)\bar{f_{k_0,1,m}(w)})$ is an $F_{k_0}$-limit and that
$H^{-1}(z,w)$ is positive definite. Now we have that
 $R_0(z)R_0(w)^*H^{-1}(z,w)$ is positive definite and so we may write, \\
 $R_0(z)R_0(w)^*H^{-1}(z,w)= G_0(z)G_0(w)^*$ and we have that
 $R_0(z)R_0(w)^* = G_0(z)H(z,w)G(w)^.$ This shows that
 $R_0(z)R_0(w)^*$ is an $F_k$-limit and so it may be absorbed into the sum.
  \end{proof}



\section{Examples and Applications}

In this section
we present a few examples to illustrate the above definitions and results.

  \begin{ex}\label{polydiskex} Let $G= \bb D^N,$ be the polydisk which has a
  presentation given by the coordinate functions $F_i(z) = z_i,
  1 \le i \le N.$ Then the algebra of this presentation is the algebra
  of polynomials and an admissible representation is given by any
  choice of N commuting contractions, $(T_1,...,T_N)$ on a Hilbert
  space. Given a matrix of polynomials, $\|(p_{i,j})\|_u= \sup
  \|(p_{i,j}(T_1,...,T_N))\|$ where the
  supremum is taken over all N-tuples of commuting contractions.  This
  is the norm considered by Agler in \cite{Ag1}, which is sometimes
  called the {\em Schur-Agler} norm \cite{LMP}. 
Our $\cl Q(\bb D^N) = \{ T= (T_1,...,T_N): \sigma(T) \subseteq
\bb D^N \text{ and } \|T_i\| \le 1 \}.$ Note that if we replace such a
$T$ by $rT= (rT_1,..., rT_N)$ then $\|rT_i\| < 1,$ $rT \in \cl Q_{\cl
  R}(\bb D^N)$ and taking suprema
over all $T \in \cl Q_{\cl R}(\bb D^N)$ will be the same as taking a
suprema over this smaller set. Thus, the algebra
  $H^{\infty}_{\cl R}(\bb D^N)$ consists of those analytic functions $f$
  such that
\[ \|f\|_{\cl R} = \sup \{ \|f(T_1,...,T_N)\|: \|T_i\|<1, i=1,...,N \}
< +\infty. \]
By Theorem~\ref{tilde-Agler} for $f \in M_{m,n}(H^{\infty}_{\cl R}(\bb
D^N),$ we have that $\|f\|_{\cl R} \le 1$ if and
only if
\[ I_m - f(z)f(w)^* = \sum_{i=1}^N (1-
z_i\bar{w_i})K_i(z,w),\]
for some analytic-coanalytic positive definite functions, $K_i:\bb D^N
\times \bb D^N \to M_m.$ 
  \end{ex}

  \begin{ex}\label{ballrowex} Let $G= \bb B_N$ denote the
  unit Euclidean ball in $\bb C^N.$ If we let $F_1(z)=(z_1,...,z_N):
   \bb B_N^- \to M_{1,N},$ then this gives us a polynomial presentation.
  Again the algebra of the presentation is the polynomial algebra.
  An admissible representation corresponds to an N-tuple of commuting
  operators $(T_1,...,T_n)$ such that $T_1T_1^* + \cdots + T_NT_N^* \le I,$
  which is commonly called a {\em row contraction} and an admissible strict
  representation is given when $T_1T_1^* + \cdots + T_NT_N^* < I,$
  which is generally referred to as a {\em strict row contraction.} In
  this case one can again easily see that $\| \cdot \|_u = \| \cdot
  \|_{u_0}$ by using the same $r<1$ argument as in the last example
  and that $f \in H^{\infty}_{\cl R}(\bb B_N)$ if and only if
\[ \|f\|_{\cl R} = \sup \{\|f(T)\|: T_1T_1^*+ \cdots +T_NT_N^* < I\}< +\infty \]
  These are the norms on polynomials considered by  Drury\cite{Dr},
  Popescu~\cite{Po}, Arveson~\cite{Ar}, and Davidson and Pitts~\cite{DP}. 
  By Theorem~\ref{tilde-Agler} we will have for $f \in
  M_{m,n}(H^{\infty}_{\cl R}(\bb B_N))$ that $\|f\|_{\cl R} \le 1$ if
  and only if
\[ I_m - f(z)f(w)^* = (1 - \langle z,w \rangle)K(z,w), \]
where $K: \bb B_N \times \bb B_N \to M_m$ is an analytic-coanalytic
positive definite function. \end{ex}

  \begin{ex}\label{ballcolex} Let $G= \bb B_N$ as above and let $F_1(z) =
  (z_1,...,z_N)^t: \bb B_N \to M_{N,1}.$ Again this is a rational
  presentation of G and the algebra of the
  presentation is the polynomials. An admissible representation
  corresponds to an N-tuple of commuting operators $(T_1,...,T_N)$
  such that $\|(T_1,...,T_N)^t\| \le 1,$ i.e., such that $T_1^*T_1+
  \cdots + T_N^*T_N \le I,$ which is generally referred to as a {\em
   column contraction.} This time the norm on $H^{\infty}_{\cl R}(\bb
 B_N)$ will be defined by taking suprema over all strict column
 contractions and we will have that $\|f\|_{\cl R} \le 1$ if and only
 if
\[ I_m - f(z)f(w)^* = R_1(z)[(I_N - (z_i\bar{w_j})) \otimes
I_{\cl H}]R_1(w)^* \]
for some $R_1:\bb B_N \to
B(\bb C^m, \cl H),$ analytic.
   \end{ex}

\begin{ex}\label{ballrowcolex} Let $G= \bb B_N$ as above, let $F_1(z) = (z_1,...,z_N):
  \bb B_N^- \to M_{1,N}$ and $F_2(z) =
  (z_1,...,z_N)^t: \bb B_N \to M_{N,1}.$ Again this is a rational
  presentation of G and the algebra of the
  presentation is the polynomials. An admissible representation
  corresponds to an N-tuple of commuting operators $(T_1,...,T_N)$
  such that $T_1T_1^*+ \cdots + T_NT_N^* \le I$ and $T_1^*T_1+
  \cdots + T_N^*T_N \le I,$ that is, which is both a row and column
  contraction. This time the norm on $H^{\infty}_{\cl R}(\bb B_N)$ is
  defined as the supremum over all commuting N-tuples that are both
  strict row and column contractions. We will have that $f \in
  M_{m,n}(H^{\infty}_{\cl R}(\bb B_N))$ with $\|f\|_{\cl R} \le 1$ if
  and only if
\[ I_m - f(z)f(w)^* = (1 - \langle z,w \rangle)K_1(z,w) +
R_1(z)[(I_N - (z_i \bar{w_j})) \otimes I_{\cl H} ]R_1(w)^*, \] where
$K_1$ and $R_1$ are as before.
\end{ex}

The last three examples illustrate that it is possible to have multiple
rational representations of $G,$ all with the same algebra, but which
give rise to (possibly) different operator algebra norms on $\cl A.$
Thus, the operator algebra norm depends not just on $G,$ but also on the
particular presentation of $G$ that one has chosen.  We
have surpressed this dependence on $\cl R$ to keep our notation
simplified.

  \begin{ex}\label{funnydisk}
  Let $G= \bb D$ the open unit disk in the complex plane and 
  let $F_1(z) = z^2, F_2(z) = z^3.$ It is easy to check that 
  the algebra $\cl A$ of this presentation is generated by the component functions and 
  the constant function is the span of the monomials, 
  $\{ 1, z^n: n \ge 2 \},$ and that $\cl A$ separates the points of $G$. In this case an (strict)admissible 
  representation, $\pi: \cl A \to B(\cl H),$ is given by any 
  choice of a pair of commuting (strict)contractions, $A= \pi(z^2), B= \pi(z^3),$ 
  satisfying $A^3=B^2.$ Again, it is easy to see that $\|.\|_u=\|.\|_{u_0}.$
  On the other hand \[ \cl Q(\bb D) = \{ T: \sigma(T) \subseteq
  \bb D \text{ and } \|T^2\| \le 1, \|T^3\| \le 1 \} \] and it can be seen that 
$H^{\infty}_{\cl R}(\bb D)$ is defined by
\[ \|f\|_{\cl R} = \sup \{ \|f(T)\|: \|T^2\|<1,\|T^3\| < 1 \} <
+\infty. \]

In this case we have that $f \in M_{m,n}(H^{\infty}_{\cl R}(\bb D)$
and $\|f\|_{\cl R} \le 1$ if and only if
\[ I_m - f(z)f(w)^* = (1-z^2\bar{w^2})K_1(z,w) + (1-z^3
\bar{w^3})K_2(z,w). \]
\end{ex}
 

  \begin{ex}\label{lensex} Let $\bb L = \{ z \in \bb C: |z-a| <1, |z-b| <1 \},$ where
  $|a-b| <1,$ then the functions $f_1(z) = z-a, f_2(z) = z-b$ give a
  polynomial presentation of this ``lens''. The algebra of this
  presentation is again the algebra of polynomials. An admissible
  representation of this algebra is defined by choosing any operator
  satisfying $\|T - aI \| \le 1$ and $\|T - bI \| \le 1,$ with strict
  inequalities for the admissible strict representations. In this
  case we easily see that $\| \cdot \|_u = \| \cdot \|_{u_0},$
  since given any operator T satisfying $\|T - aI \| \le 1$ and
  $\|T - bI \| \le 1,$ and $r<1,\  S_r= rT+(1-r)(a+b)$ corresponds to 
  the admissible strict representations and for any matrix of polynomials
  $\|(p_{i,j}(T))\| =  \lim_{r\rightarrow 1}\|(p_{ij}(S_r))\|.$ This
  algebra with this norm was studied in \cite{BC}. Their work shows that
  this norm is completely boundedly equivalent to the usual supremum norm.
  Our results imply that $f \in M_{m,n}(H^{\infty}_{\cl R}(\bb L))$
  and $\|f\|_{\cl R} \le 1$ if and only if
\[ I_m -f(z)f(w)^* = (1 - (z-a) (\overline{w-b}))K_1(z,w)
+ (1-(z-b)(\overline{w-b}))K_2(z,w). \]
\end{ex}

  \begin{ex}\label{matrixballex} Let $G= \{ (z_{i,j}) \in M_{M,N} : \|(z_{i,j}) \| < 1 \}$ and
  let $F:G \to M_{M,N}$ be the identity map $F(z) = (z_{i,j}).$ Then this
  is a polynomial presentation of G and the algebra of the presentation
  is the algebra of polynomials in the $MN$ variables $\{ z_{i,j}
  \}.$
  An admissible representation of this algebra is given by any choice of
  $MN$ commuting operators $\{ T_{i,j} \}$ on a Hilbert space $\cl H,$
  such that $\|(T_{i,j})\| \le 1$ in $M_{M,N}(B(\cl H))$ and as above,
  one can show that $\| \cdot \|_{\cl R}$ is acheived by taking
  suprema over all commuting $MN$-tuples for which $\|(T_{i,j})\|< 1.$
We have that $f \in M_{m,n}(H^{\infty}_{\cl R}(G)$ and $\|f\|_{\cl R}
\le 1$ if and only if
\[ I_m - f(z)f(w)^* = R_1(z)[(I_M - (z_{i,j})(w_{i,j})^*)
\otimes I_{\cl H}]R_1(w)^*, \] for some appropriatley chosen $R_1.$
  \end{ex}

All of the above examples are also covered by the theory of \cite{AT}
and \cite{BB}, except that their definition of the norm is slightly
different. We address this difference in a later remark. We now turn to some examples that are not covered by
their theory.

  \begin{ex}\label{annulusex} Let $0<r <1$ be fixed and
  let $\bb A_r = \{ z \in \bb C: r<|z| <1 \}$ be an annulus.
  Then this has a rational presentation given by $F_1(z) = z$ and
  $F_2(z) = r z^{-1},$ and the algebra of this presentation is just
  the Laurent polynomials. Admissible representations of this algebra
  are given by selecting any invertible operator $T$ satisfying
  $\|T\| \le 1$ and $\|T^{-1}\| \le r^{-1}.$
  Admissible strict representations are given by invertible
  operators satisfying $\|T\| < 1$ and $\|T^{-1}\| < r^{-1}.$ It is no
  longer quite so clear that $\| \cdot \|_u = \| \cdot \|_{u_0}.$
  However, this algebra with these norms is studied by the first author
  in \cite{Mi} and among other results the equality of these norms was
  shown. In \cite{DP}, it was shown that $\| \cdot \|_u$ is completely
  boundedly equivalent to the usual supremum norm. In this case one can
  see that the $\|\cdot\|_{\cl R}$ is achieved by taking suprema over
  all $T$ satisfying $\|T\| <1$ and $\|T^{-1}\| < r^{-1}.$ The formula
  for the norm is given by $\|f\|_{\cl R} \le 1$ if and only if
  \[ I_m -f(z)f(w)^* = (1 -z \bar{w})K_1(z,w) +(1-r^2 z^{-1}
  \overline{w^{-1}})K_2(z,w). \]
  \end{ex}

  \begin{ex}\label{halfplane} Let G be a simply connected domain in $\bb C$ 
   and $\phi: G \to \bb D$ be a biholomorphic map. Then $G=\{z \in \bb C: |\phi(z)|< 1\}$ and the 
   $\cl Q(G)= \{T : \sigma(T)\subseteq G \text{ and }
   \|\phi(T)\| \le 1\}$ where $\cl R= \{ \phi\}.$ In this case 
  the algebra $\cl A$ of the presentation is just the algebra of all
  polynomials in $\phi.$ Thus, an admissible representation of this algebra is defined by 
  choosing an operator $B \in B(\cl H)$ that satisfies $\|B\|\le 1$ and defining
  $\pi: \cl A \to B(\cl H)$ via $\pi( p(\phi)) = p(B),$ where $p$ is a
  polynomial.  A strict
  admissible representation is defined similarly by first choosing a strict contraction.
  In this case, it is immediate that $\|.\|_u=\|.\|_{u_0}$ and that $f \in H^{\infty}_{\cl R}(G)$ 
  if and only if $f \in Hol(G)$ and \[ \|f\|_{\cl R}= \sup \{ \|f(T)\|: T \in \cl Q_{\cl R}(G)\} < +\infty.\]
  Our results imply that $\|f\|_{\cl R} \le 1 $ if and only if
        \[1-f(z)f(w)^* = (1-\phi(z)\phi(w)^*)K_1(z,w)\]
  In particular, if we take $\phi(z)= \frac{z-1}{z+1}$ then it maps
  the halfplane $\bb H= \{z: Re(z) > 0\}$ to the unit disk. For this
  particular $\phi,$ we have that $\cl Q_{\cl R}= \{ T: \sigma(T)
  \subseteq \bb H \text{ and } Re(T) \ge 0 \}.$
\end{ex}

\begin{ex}
Similarly, if we let $G= \{ z \in \bb C^N: |\phi_i(z)| < 1, i=1,...,N
\}$ where $\phi_i(z) = \frac{z_i -1}{z_i+1},$ then $G$ is an
intersection of half planes and $\cl Q_{\cl R}$ consists of all
commuting $N$-tuples of operators, $(T_1,...,T_N)$ such that
$\sigma(T_i) \subseteq \bb H$ and $Re(T_i) \ge 0$ for all $i.$
 Applying our results, we obtain a factorization result for half planes. 
  These algebras have been studied by D.Kalyuzhnyi-Verbovetzkii in \cite{K-V}.
  \end{ex}

\begin{ex}\label{convexset}
Let $G \subseteq \bb C$ be an open convex set and represent it as an
intersection of half planes $\bb H_{\theta}.$ Each half plane can be expressed as $\{z:
|F_{\theta}(z)| < 1 \}$ for some family of linear fractional
maps. If we let $\cl R = \{ F_{\theta} \},$ then $\cl Q_{\cl R}(G) = \{T: \sigma(T) \subseteq G \text{ and }
\|F_{\theta}(T)\| \le 1 \ \forall \theta \}.$ Moreover, each inequality
$\|F_{\theta}(T)\| \le 1$ can be re-written as an operator inequality
for the real part of some translate and rotation of $T.$
For example, when $G = \bb D,$ we may take $F_{\theta}(z) =
\frac{z}{z- 2 e^{i \theta}},$ for $0 \le \theta < 2\pi.$ In this case,
one checks that $\|F_{\theta}(T) \| \le 1$ if and only if $Re(e^{i
  \theta}T) \le I.$ Thus, it follows that
\[ \cl Q(\bb D) = \{ T: \sigma(T) \subseteq \bb D \text{ and }
w(T) \le 1 \}, \]
where $w(T)$ denotes the numerical radius of $T.$
Thus, $H^{\infty}_{\cl R}(\bb D)$ becomes the ``universal'' operator
algebra that one obtains by substituting an operator of numerical
radius less than one for the variable $z$ and we have a quite
different quantization of the unit disk.
Our results give a formula for this norm, but only in terms
of $\cl R$-limits, so further work would need to be done to make it
explicit.
\end{ex} 

\begin{ex}\label{convexset2} There is a second way that
  one can quantize many convex sets. Let $G= \{ z: |z - a_k| <
  r_k, k \in I \} \subseteq \bb C$ be an open, bounded convex set that
  can be expressed as an intersection of a possibly infinite set of
  open disks. For example, any convex set with a smooth boundary with
  uniformly bounded curvature can be expressed in
  such a fashion. Then $G$ has a rational presentation given by
  $F_k(z) = r_k^{-1}(z - a_k),k \in I$ the algebra of the presentation is
  just the polynomial algebra and an admissible representation is
  given by selecting any operator $T$ satisfying, $\|T - a_kI \| \le
  r_k, k \in I.$ Thus, we again a factorization result, but only
  in terms of $\cl R$-limits.
\end{ex}

  The above definitions allow one to consider many other examples. For
  example, one could fix $0<r<1$ and let $G = \{ z \in \bb B_N: r<|z_1| \},$ with
  rational presentation $f_1(z) = (z_1,...,z_N) \in M_{1,N},$ and $f_2(z) =
  rz_1^{-1}.$ An admissible representation would then correspond to a commuting
  row contraction with $T_1$ invertible and $\|T_1^{-1}\| \le r^{-1}.$\\

We now compare and contrast some of our hypotheses with those of
\cite{AT} and \cite{BB}.
  
  \begin{remark}
  Let $G=\{z \in \bb C^N: \|F_k(z)\| < 1 \ \forall \ \ k=1,\cdots, K\}$ 
  where the $F_k$'s are matrix-valued polynomials defined on $G$. 
  Then for $ f \in Hol(G),$ \cite{AT} and \cite{BB} really study a
  norm given by 
  $\|f\|_{s}= sup\{\|f(T)\|\}$ where the supremum is taken over all
  commuting $N$-tuples of 
  operators $T$ with $ \|F_k(T)\| < 1\ \forall k.$ We wish to contrast
  this norm with our $\|f\|_{\cl R}.$ In \cite{AT} it is
  shown that the hypotheses $\|F_k(T)\| < 1, k=1,...,K$ implies that
  $\sigma(T) \subseteq G.$  Thus, we have that $\|f\|_s \le \|f\|_{\cl
    R}.$  In fact, we
  have that $\|f\|_s = \|f\|_{\cl R}.$  This can be seen by the fact
  that they obtain identical factorization theorems to
  ours. This can also be seen directly in some cases where the algebra $\cl A$
  contains the polynomials and when it can be seen that $\|\cdot
  \|_{\cl R}$ is attained by taking the supremum over matrices(see Remark~\ref{finite}).
  Indeed, if $\|f\|_{\cl R}$ is attained as the supremum over commuting
  $N$-tuples of finite matrices $T= (T_1,...,T_N)$ satisfying $\sigma(T) \subseteq G$ and
  $\|F_k(T)\| \le 1$ then such an $N$-tuple of commuting matrices, can be conjugated by a
  unitary to be simultaneously put in upper triangular form.  Now it is easily argued that the
  strictly upper triangular entries can be shrunk slightly so that one
  obtains new $N$-tuples $T_{\epsilon}=(T_{1,\epsilon},...,T_{N,\epsilon})$ satisfying,
  $\|F_k(T_{\epsilon})\| < 1, k=1,...,K$ and $\|T_i - T_{i,\epsilon}\|
  < \epsilon.$ But we do not have a simple direct argument that works
  in all cases.
  \end{remark}
  
\begin{remark} We do not know how generally it is the case that $\|
  \cdot \|_u$ is a local norm.  That is, we do not know if $\|f
  \|_u = \|f \|_{\cl R}$ for $f \in M_n(\cl A).$  In particular, we do
  not know if this is the case for Example~\ref{funnydisk}.
In this case, the algebra of the of the presentation is $\cl A = span
\{ z^n: n \ge 0, n \ne 1 \}.$ If we write a polynomial $p \in \cl A$
in terms of its even and odd decomposition, $p = p_e + p_o,$ then
$p_e(z) = q(z^2)$ and $p_o = z^3r(z^2)$ for some polynomials $q,r.$ In
this case it is easily seen that
\[ \|p\|_u = \sup \{ \|q(A) + Br(A)\|: \|A\|\le 1, \|B\| \le 1, AB=BA, A^3
= B^2 \}, \] while
\[ \|p\|_L=\|p\|_{\cl R} = \sup \{ \|p(T) \|: \|T^2\| \le 1, \|T^3\| \le 1 \}. \]

\end{remark}


\end{document}